\theoremstyle{plain}
\newtheorem{theorem}{Theorem}[section]
\newtheorem{corollary}[theorem]{Corollary}
\newtheorem{claim}[theorem]{Claim}
\newtheorem{openproblem}[theorem]{Open problem}
\newtheorem{lemma}[theorem]{Lemma}
\newtheorem{sublemma}[theorem]{Claim}
\newtheorem{observation}[theorem]{Observation}
\newcommand{\vast}{\bBigg@{4}}
\newcommand{\Vast}{\bBigg@{5}}
\definecolor{bulgarianrose}{rgb}{0.28, 0.02, 0.03}
\definecolor{gray}{rgb}{0.5, 0.5, 0.5}
\theoremstyle{definition}
\theoremstyle{remark}
\newtheorem*{fact*}{Fact}
\newtheorem*{question*}{Question}
\def\namedlabel#1#2{\begingroup
    #2%
    \def\@currentlabel{#2}%
    \phantomsection\label{#1}\endgroup
}
\pgfplotsset{compat=1.16}
\newcommand\tsup[2][2]{%
 \def\useanchorwidth{T}%
  \ifnum#1>1%
    \stackon[-.5pt]{\tsup[\numexpr#1-1\relax]{#2}}{\scriptscriptstyle\sim}%
  \else%
    \stackon[.5pt]{#2}{\scriptscriptstyle\sim}%
  \fi%
}
\newcommand{\R}{{\mathbb R}}
\newcommand{\N}{{\mathbb N}}
\newcommand{\RG} {\ensuremath{\mathcal G(n,r)}}
\newcommand{\RGp} {\ensuremath{\mathcal G_{Po}(n,r)}}
\newcommand{\RT} {\ensuremath{\mathcal T(n,r)}}
\newcommand{\RTp} {\ensuremath{\mathcal T_{Po}(n,r)}}
\newcommand{\E}{\mathbb E}
\newcommand{\Prob}{\mathbb{P}}
\newcommand\eps{\varepsilon}
\newcommand{\Bb}{{\mathcal B}}
\newcommand{\Cc}{{\mathcal C}}
\newcommand{\Dd}{{\mathcal D}}
\newcommand{\Rr}{{\mathcal R}}
\title{\scshape Localization Game for Random Geometric Graphs}
\author[1]{Lyuben Lichev}
\author[1,2]{Dieter Mitsche\footnote{Dieter Mitsche has been partially supported by grant Fondecyt grant 1220174 and by grant GrHyDy ANR-20-CE40-0002.}}
\author[3]{Pawe\l{} Pra\l{}at}
\affil[1]{Univ. Jean Monnet and Institut Camille Jordan, Saint-Etienne, France}
\affil[2]{Institute for Mathematical and Computational Engineering, PUC Chile}
\affil[3]{Toronto Metropolitan University, Toronto, Canada} 
\begin{document}

\maketitle
 
\begin{abstract}
The localization game is a two player combinatorial game played on a graph $G=(V,E)$. The cops choose a set of vertices $S_1 \subseteq V$ with $|S_1|=k$. The robber then chooses a vertex $v \in V$ whose location is hidden from the cops, but the cops learn the graph distance between the current position of the robber and the vertices in $S_1$. If this information is sufficient to locate the robber, the cops win immediately; otherwise the cops choose another set of vertices $S_2 \subseteq V$ with $|S_2|=k$, and the robber may move to a neighbouring vertex. The new distances to the robber are presented, and if the cops can deduce the new location of the robber based on all information they accumulated thus far, then they win; otherwise, a new round begins. If the robber has a strategy to avoid being captured, then she wins. The localization number is defined to be the smallest integer $k$ so that the cops win the game. In this paper we determine the localization number (up to poly-logarithmic factors) of the random geometric graph $G \in \RG$ slightly above the connectivity threshold.
\end{abstract}

\hspace{1em}Keywords: random graphs, random geometric graphs, localization number, cops and robbers

\hspace{1em}MSC Class: 05C80, 60C05, 05D40, 60D05

\section{Introduction}

\subsection{Localization game}

Graph searching focuses on the analysis of games and graph processes that model some form of intrusion in a network and efforts to eliminate or contain that intrusion. One of the best known examples of graph searching is the game of \emph{Cops and Robbers}, wherein a robber is loose on the network and a set of cops attempts to capture the robber. For a book on graph searching see~\cite{book}. 

In this paper we consider the \emph{Localization Game} that is related to the well studied \emph{Cops and Robbers} game. For a fixed integer $k\geq 1$, the localization game with $k$ sensors is a two player combinatorial game played on a graph $G$ which is known to both players. To initialize the game, the \emph{cops} first choose a set $S_1 \subseteq V(G)$ with $|S_1|=k$. The \emph{robber} then chooses a vertex $v\in V(G)$ to start at, whose location on the graph is hidden from the cops. The cops then learn the graph distance between the current position of the robber and the vertices of $S_1$. If this information is sufficient to locate the robber, then the cops win immediately. Otherwise, a new round begins, and the cops now choose another subset $S_2 \subseteq V(G)$ of size $k$, based on all the past information available to them. At this point, the robber is allowed to move to any vertex of distance one from its current location, based on $S_1$ \textit{and} $S_2$. The distances of the robber's new location to the vertices of $S_2$ are then presented to the cops, at which point the cops win if these new distance values in conjunction with the previous ones are sufficient to locate the robber. If the cops' information is still insufficient to win the game, then another round begins.  These rounds continue until the cops are able to locate the robber, in which case we say that the cops win, or the game proceeds indefinitely, in which case we say that the robber wins. Hence, to summarize, each round consists of the following \textit{steps}: 
\begin{enumerate}
    \item[a)] the cops place $k$ sensors on some vertices of $G$,
    \item[b)] the robber moves to a neighbor of the vertex she currently occupies or stays put (if this is the first round, then she simply chooses any vertex of $G$ to start with),
    \item[c)] the cops obtain the information about the distances between the sensors and the robber,
    \item[d)] the cops combine the information from all rounds so far and the game ends if this is enough to detect the position of the robber.
\end{enumerate}

We provide more details in Subsection~\ref{sec:perfect_information} to show that the localization game is a combinatorial game. This motivates the following definition. Given a graph $G$, its localization number, denoted by $\zeta(G)$, is the minimum $k$ such that the cops can eventually locate the robber using exactly $k$ sensors in each round. The localization game was introduced for one sensor ($k=1$) in~\cite{paper19, paper20} and was further studied in~\cite{Anthony_Bill, paper5, paper7, paper9, paper16}.

Let us emphasize that the cops only win provided their strategy beats all robber's strategies, and thus is a worst-case win condition. An alternative ``robber first'' definition of the localization game involves the robber moving first in each round, in particular choosing their move prior to the initial placement of the cops' sensors. Since both games require a worst case guarantee for the cops to win, these games are equivalent.

\subsection{Random geometric graphs}

In this paper we investigate geometric graphs in the plane. Given a positive integer $n$ and a threshold distance $r>0$, we consider the \emph{random geometric graph} $G \in \RG$ on vertex set $V=\{v_1, v_2, \ldots,v_n\}$ obtained by starting with $n$ random points $x_1, x_2, \ldots,x_n$ in $\R^2$ sampled independently and uniformly in the square $\left[0,\sqrt{n}\right]^2$. For any $i \neq j$, the vertices $v_i$ and $v_j$ are adjacent when the Euclidean distance $d_E(x_i, x_j)$ is at most $r$. Note that, with probability $1$, no point in $\left[0,\sqrt{n} \right]^2$ is chosen more than once, so we may identify each vertex $v_i \in V$ with its corresponding geometric position $x_i$. In fact, in order to simplify some of the proofs, we will work with the random geometric graph $G \in \RT$ equipped with the torus metric $d_T(\cdot, \cdot)$ instead of $d_E(\cdot, \cdot)$. For more details about these models see, for example,~\cite{Penrose}.

Our results are asymptotic in nature. In other words, we will assume that $n\to\infty$ and $r=r(n)$ may (and usually does) tend to infinity as $n\to\infty$. We are interested in events that hold \emph{asymptotically almost surely} (\emph{a.a.s.}), that is, events that hold with probability tending to 1 as $n\to \infty$. It is known that $r_c = r_c(n)=\sqrt{\frac{\log n}{\pi}}$ is a sharp threshold function for connectivity for $G \in \RG$ (see, for example, \cite{Goel05,Penrose97}). This means that for every $\varepsilon>0$, if $r\le(1-\varepsilon)r_c$, then $G$ is disconnected a.a.s., whilst if $r\ge (1+\varepsilon)r_c$, then $G$ is connected a.a.s. The same property holds for $G \in \RT$.

\subsection{Asymptotic notation}

Given two functions $f=f(n)$ and $g=g(n)$, we will write:
\begin{itemize}
    \item $f(n)=O(g(n))$ if there exists an absolute constant $c \in \R_+$ such that $|f(n)| \leq c|g(n)|$ for all $n$,
    \item $f(n)=\Omega(g(n))$ if $g(n)=O(f(n))$,
    \item $f(n)=\Theta(g(n))$ if $f(n)=O(g(n))$ and $f(n)=\Omega(g(n))$,
    \item $f(n)=o(g(n))$ or $f(n) \ll g(n)$ if $\lim_{n\to\infty} f(n)/g(n)=0$, and
    \item $f(n) \gg g(n)$ if $g(n)=o(f(n))$.
\end{itemize}

\subsection{Our main result}

Here and below, we fix $r_0 = r_0(n) = 70\sqrt{\log n}$. We split the statement of Theorem~\ref{thm:main} into four different cases corresponding to different proof strategies (or adaptations of proof strategies) for different values of the parameter $r$. Unfortunately, our proofs do not give insight which (if any) of the obtained bounds are tight.

\begin{theorem}\label{thm:main}
Fix $r = r(n) \in [r_0, \sqrt{n}/4)$ and let $G\in \RT$. Then, a.a.s.\ the following bounds hold: 
\begin{enumerate}
    \item If $\log^{3/2} n \le r < \sqrt{n}/4$, then $\Omega(r^{4/3}/ (\log n)^{1/3}) = \zeta(G) = O(r^{4/3})$.
    \item If $\log n \le r < \log^{3/2} n$, then $\Omega(r^{4/3}/ (\log n)^{1/3}) = \zeta(G) = O(\log^2 n)$. 
    \item If $\frac{\log n}{(\log \log n)^{1/2}\log \log \log n} \le r < \log n$, then $\Omega(r^2/\log n) = \zeta(G) = O(r^2)$.
    \item If $r_0 \le r < \frac{\log n}{(\log \log n)^{1/2}\log \log \log n}$, then $\Omega(\log n/\log(r^2/\log n)) = \zeta(G) = O(r^2)$.
\end{enumerate}
\end{theorem}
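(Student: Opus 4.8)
The plan is to treat the four regimes of $r$ separately, establishing upper and lower bounds on $\zeta(G)$ by analyzing the local geometry of $G \in \RT$. The key structural observation throughout is that, for $r$ slightly above the connectivity threshold $r_c = \sqrt{(\log n)/\pi}$, graph distance in $G$ is tightly concentrated around $d_T(x_i, x_j)/r$: any two points at torus distance $d$ are at graph distance $\lceil d/r \rceil$ up to an additive error controlled by fluctuations of the Poisson point process in thin ``tubes'' connecting them. I would first prove (or cite from the paper's preliminaries) a deterministic-looking lemma stating that a.a.s., for all pairs, $d_G(v_i,v_j) = d_T(x_i,x_j)/r + O(\text{lower order})$, and more importantly a two-sided estimate describing the set of vertices at a given graph distance from a fixed sensor as an annulus of width $\Theta(r)$ (with irregular boundary). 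This converts the combinatorial localization game into a geometric one: a vertex $u$ is ``resolved'' by a sensor set $S$ when the tuple of annuli indexed by $S$ pins down $u$'s cell to a single vertex.

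For the \textbf{upper bounds}, the cops' strategy is essentially a covering/sweeping argument. To get $\zeta(G) = O(r^{4/3})$ in regime~1, I would tile the torus $[0,\sqrt n]^2$ into $\Theta(\sqrt n / L)^2$ blocks of side length $L$ to be optimized, place sensors so that within each block the residual ambiguity (the number of vertices consistent with the observed distance vector) is small, then use a second round to finish: after the robber moves (by at most one graph step, i.e.\ distance $\le r$ in the plane) her candidate region grows only slightly, and a fresh placement of sensors localizes her. The budget $k = O(r^{4/3})$ arises from balancing the number of blocks against the $\Theta(r)$-width uncertainty of each annulus, giving the $r^{4/3}$ scaling; one checks $r^{4/3} \ge \log^{3/2}n$ forces this to dominate, whereas in regime~2 the coarser bound $O(\log^2 n)$ comes from a cruder tiling that still works when $r < \log^{3/2}n$. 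In regimes~3 and~4, where $r$ is $O(\log n)$, the graph is ``barely connected'' and one can afford $O(r^2)$ sensors — essentially one per ball of radius $\Theta(r)$ around a net — after which every vertex is trivially resolved, since each vertex has $\Theta(r^2)$ neighbours and a net of that many sensors sees distinct distance patterns.

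For the \textbf{lower bounds}, I would exhibit a robber strategy by a counting/entropy argument: with $k$ sensors the cops receive a distance vector whose number of distinct possible values is at most roughly $(\text{diam})^k$; if this is smaller than the number of vertices in some region the robber can reach, two vertices collide and the robber hides in the ambiguity, then uses her move to maintain it. Quantitatively, in regime~1 each sensor partitions $V$ into $O(\sqrt n / r)$ annuli, each of which, when intersected with the others, can still be forced (by choosing the robber's region adversarially, exploiting the $\Theta(r)$-width fuzziness and the torus symmetry) to contain $\gg 1$ vertices unless $k = \Omega(r^{4/3}/(\log n)^{1/3})$; the extra $(\log n)^{1/3}$ loss compared to the upper bound is exactly the slack between the annulus width $\Theta(r)$ and the $\Theta(\sqrt{r\log n})$-scale fluctuations of its boundary. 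Regimes~3 and~4 use the same idea but now the relevant quantity is the degree $\Theta(r^2)$ and the number of distance levels is $O(\log n / \log(r^2/\log n))$, yielding those bounds by a pigeonhole on a single high-degree vertex's neighbourhood.

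The \textbf{main obstacle} I anticipate is making the lower-bound robber strategy robust across rounds: it is not enough to show that after one placement the robber has many candidate positions — one must show she can always move to keep the candidate set large and ``connected enough'' in graph distance, against an adaptive cop strategy that remembers all past distance vectors. This requires a careful potential-function argument showing the candidate set shrinks by at most a bounded factor per round while the robber's reachable set grows, so that the game never terminates; controlling this against the irregular (random) boundaries of the distance annuli — rather than idealized circular ones — is the delicate point, and is presumably where the bulk of the technical work (concentration inequalities for Poisson tube counts, union bounds over all sensor placements) goes.
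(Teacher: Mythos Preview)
Your proposal has genuine gaps on both sides, and in particular misses the key structural insight that makes the lower bound work cleanly.

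\textbf{Lower bound.} Your entropy argument based on ``$(\mathrm{diam})^k$ possible distance vectors'' is too coarse to yield $\Omega(r^{4/3}/(\log n)^{1/3})$, and the ``main obstacle'' you identify --- maintaining the candidate set across rounds against adaptive cops --- is in fact a red herring. The paper's approach sidesteps it entirely: the robber commits to a ball $\Bb_R$ of radius $r/3$, which induces a \emph{clique} in $G$. Because it is a clique, (i) in every round the robber can move to \emph{any} vertex of $\Bb_R$, so the candidate set never shrinks across rounds --- she only needs to show that no \emph{single} placement of $k$ sensors distinguishes all pairs in $\Bb_R$; and (ii) any sensor $v$ partitions $\Bb_R\cap V$ into exactly \emph{two} distance classes $D_k(v)$ and $D_{k+1}(v)$, not $O(\sqrt{n}/r)$ annuli. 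The real work is then purely geometric: the boundary between these two classes inside $\Bb_R$ is a union of circular arcs whose total length is $O(r)$ (this requires a nontrivial lemma comparing arc lengths), so its $\eps$-tube has area $O(\eps r)$. One then constructs an $\eps$-special matching of $\Omega(r^2\eps^2)$ pairs in $\Bb_R$ and shows each sensor separates only $O(\eps^3 r)$ of them; dividing and optimising $\eps=(\log n/r)^{1/3}$ gives the bound. For regime~4 the mechanism is different again: one finds a small ball with many vertices such that the annulus at distance $\sim r$ from its centre is \emph{empty}, so no external sensor distinguishes anything inside.

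\textbf{Upper bound.} Your ``tile the whole torus and balance block size'' sketch does not produce $r^{4/3}$ with the right mechanism. The paper uses a two-phase strategy: first, over many rounds with only four sensors, localise the robber to a square $S$ of side $O(r)$ (this uses the graph-distance/Euclidean-distance comparison you mention); then finish in one round by placing all $k$ sensors inside $S$. The set $W=X\cup Y$ is built probabilistically: $X$ is a random $\delta$-sample of $S\cap V$, and $Y$ collects all vertices in pairs that $X$ fails to distinguish. A pair $u,v$ at distance $\eps$ is distinguished by $X$ iff $X$ hits $\Bb(u,r)\triangle\Bb(v,r)$, of area $\Theta(\eps r)$; balancing $|X|\approx\delta r^2$ against the expected size of $Y$ (controlled by counting pairs at each distance scale) and optimising $\delta=r^{-2/3}$ yields $|W|=O(r^{4/3})$. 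For regimes~3 and~4 one simply takes $\delta=1$, i.e.\ places a sensor on every vertex of $S$, giving $O(r^2)$.
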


Note that the lower bound we prove below for $\tfrac{\log n}{(\log \log n)^{1/2}\log \log \log n} \le r < \log n$ in Theorem~\ref{thm:main} is slightly stronger, namely $\zeta(G)=\Omega(r^2 \log(e \log n/r)/\log n)$, but for the sake of readability we opted here for a slightly weaker version. Next, let us point out that we restrict ourselves to $r < \sqrt{n}/4$. This is done for a technical reason. For extremely dense graphs, the behaviour of $\RT$ changes drastically. In the extreme case, when $r \ge \sqrt{n/2}$, $\RT$ is simply the complete graph on $n$ vertices and $\zeta(\RT)=n-1$. Such dense graphs are not so interesting as they do not represent the typical nature of random geometric graphs and for that reason are rarely studied. Indeed, for such dense graphs the effect of wrapping around the torus has to be considered, and the results for $\RT$ typically differ from the ones for $\RG$, the random geometric graph in the square. 

\subsection{Main ideas behind the proofs}

The proof of Theorem~\ref{thm:main} in divided into a proof of the upper bounds and a proof of the lower bounds in the four regimes.
For the upper bounds, we first show that by using only four sensors the cops may locate the position of the robber throughout several rounds within a square $S$ of side length $20000 r$. Then, the cops need one last round to win. Roughly speaking, they divide their set of sensors into two parts of comparable sizes. Then, they distribute the first part of their sensors uniformly at random among all vertices of $G$ in the square $S$. Finally, the cops take care of the vertices in the square, which cannot be uniquely distinguished by the sensors already used, and put one sensor on any such vertex.\par

For the lower bounds, we show that it is sufficient for the robber to choose any ball of radius $r/3$ before even knowing the random graph $G\in \RT$. Once having done that, we prove that a.a.s.\ the number of sensors, given by Theorem~\ref{thm:main}, is not sufficient to distinguish the position of the robber even if she decides to stay in the ball forever.

\subsection{Related results}

The metric dimension of a graph $G$, written $\beta(G)$, is the minimum number of sensors needed in the localization game so that the cops can win in one round. The localization number is related to the metric dimension of a graph in a way that is analogous to how the cop number is related to the domination number. In particular, it follows that $\zeta(G) \le \beta(G)$, but in many cases this inequality is far from tight. 

Although the game has not yet been studied for random geometric graphs, there are some known results for the classical \emph{binomial random graph} $\mathcal{G}(n,p)$. The localization number for dense random graphs (in particular, in the regime in which $\mathcal G(n,p)$ has diameter two a.a.s.) was studied in~\cite{Dudek}. The bounds for dense graphs were consecutively improved in~\cite{Pawel}, and the arguments were extended to sparse graphs. 

The metric dimension was also studied for the $\mathcal{G}(n,p)$ model. The statements of the bounds for $\beta(G)$ with $G \in \mathcal{G}(n,p)$ obtained in~\cite{metric_dimension} are slightly technical, but the following observations can be made: for sparser graphs (that is, graphs of diameter at least three a.a.s., which corresponds to $i \ge 2$ in the discussion below), it follows from~\cite{metric_dimension} and~\cite{Pawel} that $\zeta(G) < \beta(G)$. In fact, if $np=n^{x+o(1)}$ for some $x\in (\frac{1}{i+1},\frac{1}i)$, $i \in \N \setminus \{1\}$, then a.a.s.\ $i + o(1) \le \beta(G) / \zeta(G) \le 1/x + o(1) < i+1$, and so these two graph parameters are a multiplicative constant away from each other (the ratio being roughly equal to the diameter of the graph). Moreover, for very sparse graphs, say for example $np = \log^6 n$, a.a.s.\ $\zeta(G) = \Theta( n \log \log n / (np)^i)$ whereas $\beta(G) = \Theta( n \log n / (np)^i)$, implying that for such value of $np$, $\zeta(G)=o(\beta(G))$. 

\section{Preliminaries}

\subsection{Reformulation of the game with perfect information for the cops}\label{sec:perfect_information}

In this section we show that the game we study is a combinatorial, perfect information game despite the fact that the robber is invisible for the cops. Let $G=(V,E)$ be a connected graph. Given a set $S\subseteq V$ of size $k$, $S=\{s_1,s_2,\dots,s_k\}$, and a vertex $v\in V$, the $S$-\emph{signature} of $v$ is defined as the vector $\mathbf{d}=\mathbf{d}(S,v)=(d_1,d_2,\dots,d_k)$ where for every $i \in \{1, 2, \ldots, k\}$, $d_i=d_G(s_i,v)$ is the graph distance from $s_i$ to $v$. Given a set $V' \subseteq V$, let 
$$
N[V'] = N_G[V'] := \{v \in V : d(v,u) \le 1 \text{ for some } u \in R \},
$$
that is, $N[V']$ is the closed neighborhood of the set of vertices $V'$ in $G$.\par

The \emph{localization game with $k$ sensors} is a game played by two players, the \emph{cops} and the \emph{invisible robber}. While playing the game, both the cops and the robber are aware of the underlying graph and each of the previous moves of the cops. However, the cops are not aware of the exact location of the robber while the robber is aware of every move they have made. Thus, the robber has perfect information in the localization game, but the cops do not, which at first sight contradicts our claim. Therefore, we propose the following reformulation of the game, which is based on a purely information theoretical perspective. When the cops put their sensors on the vertices of the set $S_1$, we partition the vertex set $V$ into $R^1_{1}\cup R^1_{2}\cup \ldots \cup R^1_{\ell_1}$ where the sets $(R^1_{j})_{1\leq j\leq \ell_1}$ are the equivalence classes of vertices in $V$ that have the same $S_1$-signature. Then, instead of choosing a specific location, the robber can choose some equivalence class $R^1_{j_1}$. Once the cops choose $S_2$, we partition the set $N[R^1_{j_1}]$ into equivalence classes $R^2_{1}\cup R^2_{2}\cup \ldots \cup R^2_{\ell_2}$ so that every vertex in $R^2_{j}$ has the same $S_2$-signature. Then, the robber chooses a set among $(R^2_{j_2})_{1\le j_2\le \ell_2}$. Iteratively, in round $i$, once the cops choose $S_i$, this gives the partition $N[R^{i-1}_{j_{i-1}}]=R^i_{1}\cup R^i_{2}\cup \ldots \cup R^i_{\ell_i}$ with every vertex in $R^i_{j}$ having the same $S_i$-signature; then the robber chooses some $R^i_{j_i}$. In this version of the game, the cops win in round $i$ if the robber is forced to choose a set $R^i_{j_i}$ with only one vertex, that is, $|R^i_{j_i}|=1$. In this reformulation, both players have perfect information. In particular, the localization game is a combinatorial game and so one of the players must have a winning strategy, that is, a strategy which wins against all of the other player's strategies simultaneously. We direct the reader to~\cite{Pawel} for a longer discussion. 

\subsection{Notation}

Let $\sim$ be the equivalence relation on $\R^2$ defined by $(0,x)\sim(\sqrt{n},x)$ and $(x,0)\sim(x,\sqrt{n})$ for every $x \in \R$. The \emph{torus $T_n$} is defined as $T_n = \R^2/\sim$ and is equipped with the natural metric $d_{T_n}$, inherited from the Euclidean metric $d_E$ on $\mathbb R^2$. To simplify notation, we write $d_T$ instead of $d_{T_n}$ below. The following definitions are used for both the Euclidean distance as well as the distance on the torus. For a given $x \in [0, \sqrt{n}]^2$ (respectively, $x\in T_n$) and $r \ge 0$, let $\Bb(x, r)$ be the \emph{(closed) ball with center $x$ and radius $r$}, that is, $\Bb(x, r) = \{y \in [0,\sqrt{n}]^2 : d_E(x,y) \le r\}$ (on $T_n$ we have $\Bb(x, r) = \{y \in T_n : d_T(x,y) \le r\}$). Let $\Cc(x,r)$ be the \emph{circle with center $x$ and radius $r$}, that is, $\Cc(x, r) = \{y \in [0,\sqrt{n}]^2 : d_E(x,y) = r\}$ (again, on $T_n$ we have $\Cc(x, r) = \{ y \in T_n : d_T(x,y) = r\}$). Finally, for $0\le r_1\le r_2$, let $\Dd(x, r_1, r_2) = \Bb(x, r_2)\setminus \Bb(x, r_1)$ be the \emph{crown with center $x$ and radii $r_1$ and $r_2$}. For any $d\ge 0$, we also use the term \emph{strip of width $d$} to denote the set of points in $\R^2$ at distance at most $d/2$ from a fixed line.

\medskip

As typical in the field of random graphs, we will use $\log x$ to denote the natural logarithm of $x$. Finally, for expressions that clearly have to be integer valued, we systematically round up or down without specifying which since the choice does not affect our arguments.

\subsection{De-Poissonization}\label{sec:Poisson}

In order to simplify some of our proofs, we will make use of a technique known as \emph{de-Poissonization}, which has many applications in geometric probability (see~\cite{Penrose} for a detailed account of the subject). Here we only roughly sketch the idea behind it.

Consider the following related models of random geometric graphs. Let $V=V'$, where $V'$ is a set given by a homogeneous Poisson point process of intensity $1$ in $[0,\sqrt{n}]^2$, respectively in $T_n$. In other words, $V'$ consists of $N$ points in the square $[0,\sqrt{n}]^2$, or in the torus $T_n$, chosen independently and uniformly at random, where $N$ is a Poisson random variable with expectation equal to $n$. By analogy to the models $\RG$ and $\RT$, almost surely no two vertices are located at the same position, and we are therefore allowed to identify any vertex $v_i$ with its geometric position $x_i$ in $[0,\sqrt{n}]^2$, respectively in $T_n$. Fix a parameter $r\ge 0$ and, for any pair of vertices $u$ and $v$ in $V'$, connect $u$ and $v$ if $d_E(u,v)\le r$, when working with $\mathcal G(N,r)$, and if $d_T(u,v)\le r$, when working with $\mathcal T(N,r)$. We denote these new models by $\RGp$ and $\RTp$.

Since our main result deals with the $\RT$ model, we concentrate on the connection between the models $\RT$ and $\RTp$. The same relationship holds for $\RG$ and $\RGp$. The main advantage of defining $V'$ via a Poisson point process is motivated by the following two properties: first, the number of vertices of $V'$ that lie in any measurable set $A\subseteq T_n$ of Lebesgue measure $a$ has a Poisson distribution with expectation $a$, and second, the number of vertices of $V'$ in disjoint subsets of $T_n$ are independently distributed. Moreover, by conditioning $\RTp$ upon the event $N=n$, we recover the original distribution of $\RT$. Therefore, since $\mathbb P(N=n)=\Theta(1/\sqrt n)$, any event holding in $\RTp$ with probability at least $1-o(f_n)$ must hold in $\RT$ with probability at least $1-o(f_n \sqrt n)$. 

We may also transfer results that hold in $\RT$ to $\RTp$. For example, suppose that for some random variable $X=X(G)$, there exist non-decreasing functions $f(n)$ and $g(n)$ such that a.a.s.\ $f(n) \le X \le g(n)$ for $G \in \RT$. Then, since a.a.s.\ $(1-\eps)n \le N \le (1+\eps)n$ for some $\eps = \eps(n) = o(1)$, we get that a.a.s.\ $f((1-\eps)n) \le X \le g((1+\eps)n)$ for $G \in \RTp$. In particular, our main result, Theorem~\ref{thm:main}, holds for $G \in \RTp$ as well.

\subsection{Concentration inequalities}

Let us first state a few specific instances of Chernoff's bound that we will find useful. Let $X \sim \textrm{Bin}(n,p)$ be a random variable distributed according to a Binomial distribution with parameters $n$ and $p$. Then, a consequence of \emph{Chernoff's bound} (see e.g.~\cite[Theorem~2.1]{JLR}) is that for any $t \ge 0$ we have
\begin{eqnarray}
\Prob( X \ge \E X + t ) &\le& \exp \left( - \frac {t^2}{2 (\E X + t/3)} \right)  \label{chern1} \\
\Prob( X \le \E X - t ) &\le& \exp \left( - \frac {t^2}{2 \E X} \right).\label{chern}
\end{eqnarray}

Moreover, let us mention that the bound holds in a more general setting as well, that is, for $X=\sum_{i=1}^n X_i$ where $(X_i)_{1\le i\le n}$ are independent variables and for every $i \in \{1, 2, \ldots, n\}$ we have $X_i \sim \textrm{Bernoulli}(p_i)$ with (possibly) different $p_i$-s (again, see~e.g.~\cite{JLR} for more details).

\bigskip

We will also need the following generalization of the previous bound due to Bentkus~\cite{Bentkus}, stated in a simplified form here. For two random variables $X$ and $Y$ defined on the same probability space, we write $X \preccurlyeq Y$ if $Y$ stochastically dominates $X$, that is, $\Prob(X \ge x) \le \Prob(Y \ge x)$ for all $x \in \mathbb{R}$. Let $\mathcal{L}(X)$ denote the distribution of the random variable $X$. For a positive random variable $Y$ and for $m > 0$ we define the random variable $Y^{[m]}$ so that $\E Y^{[m]}=m$, $Y^{[m]} \preccurlyeq Y$  and so that for some $b > 0$ we have $\Prob(0 < Y^{[m]}< b)=0$ and $\Prob(Y^{[m]} \ge x)=\Prob(Y \ge x)$ for all $x \ge b$ (in other words, one may roughly think of $Y^{[m]}$ as the random variable ``shifting mass that is close to $0$ to $0$ itself'').  

\begin{lemma}[\cite{Bentkus}]\label{bentkus}
Let $S=X_1+\ldots+X_{\ell}$ be a sum of $\ell$ positive independent random variables. Assume that for every $k \in \{1, 2, \ldots, \ell\}$ we have $X_k \preccurlyeq Y$ and $\E X_k \le m$ for some positive random variable $Y$ and some non-negative real number $m\le \mathbb E Y$. Let $T=\eps_1+\ldots+\eps_{\ell}$ be a sum of $\ell$ independent random variables $\eps_k$ so that $\mathcal{L}(\eps_k)=\mathcal{L}(Y^{[m]})$. Then, for all $x \in \mathbb{R}$, 
$$
\Prob(S \ge x) \le \inf_{h \le x} e^{-hx} \E e^{hT}.
$$
In particular, if $\E S \ge 1/c$ for some constant $c > 0$, 
$$
\Prob{ \left(S \ge c \, \E S\right)} \le e^{-c \E S} \E e^T.
$$
\end{lemma}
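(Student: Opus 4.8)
The inequality is of Chernoff type, so I would start from the exponential Markov bound: for every $h$ with $0 \le h \le x$,
\begin{equation*}
\Prob(S \ge x) \le e^{-hx}\,\E e^{hS} = e^{-hx}\prod_{k=1}^{\ell}\E e^{hX_k},
\end{equation*}
the equality using independence of the $X_k$. Since the $\eps_k$ are independent as well, $\E e^{hT} = \prod_{k=1}^{\ell}\E e^{h\eps_k}$, so the first assertion follows once we establish, for each $k$ and each $h \ge 0$, the single–summand comparison $\E e^{hX_k} \le \E e^{h\eps_k}$; taking the infimum over $h \in [0,x]$ then gives the stated bound. Thus the whole lemma reduces to the following extremal claim: among all positive random variables $X$ with $X \preccurlyeq Y$ and $\E X \le m$, the variable $Y^{[m]}$ maximises $\E\phi(X)$ for every convex nondecreasing $\phi$ (apply this with $\phi(t)=e^{ht}$, which is convex and nondecreasing because $h \ge 0$).

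To prove this extremal property — which is the main technical obstacle, and is essentially the content of Bentkus's theorem — I would use a chord linearisation. Let $b > 0$ be the threshold appearing in the definition of $Y^{[m]}$ and let $L(t) = \alpha + \beta t$ be the affine function agreeing with $\phi$ at $0$ and at $b$; then $\beta \ge 0$, so $L$ is nondecreasing, and convexity of $\phi$ gives $\phi \le L$ on $[0,b]$ and $\phi \ge L$ on $[b,\infty)$. Consequently $\phi(t) \le L(t) + \psi(t)$ for all $t \ge 0$, where $\psi(t) := (\phi(t)-L(t))\,\mathbf 1[t \ge b]$ is nonnegative, continuous (it vanishes at $b$) and nondecreasing. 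Taking expectations and using that $L$ is affine and nondecreasing with $\E X \le m$, and that $X \preccurlyeq Y$ with $\psi$ nondecreasing, we get $\E\phi(X) \le L(\E X) + \E\psi(X) \le L(m) + \E\psi(Y)$. On the other hand, $Y^{[m]}$ is supported on $\{0\}\cup[b,\infty)$, where in fact $\phi(t) = L(t)+\psi(t)$ identically, and it shares the tail of $Y$ above $b$ while $\psi(b)=0$; hence $\E\phi(Y^{[m]}) = L(\E Y^{[m]}) + \E\psi(Y^{[m]}) = L(m) + \E\psi(Y)$. Comparing the two lines proves $\E\phi(X) \le \E\phi(Y^{[m]})$, and therefore $\E e^{hX_k}\le \E e^{h\eps_k}$.

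Finally, the ``in particular'' statement is obtained by specialising the first bound to $x = c\,\E S$ and $h = 1$: this choice of $h$ is legitimate in the infimum precisely because $1 \le c\,\E S$, which is equivalent to the hypothesis $\E S \ge 1/c$, and it yields $\Prob(S \ge c\,\E S) \le e^{-c\,\E S}\,\E e^{T}$, as claimed. The only genuinely delicate point in the whole argument is the extremal characterisation of $Y^{[m]}$ in the second paragraph; everything else is the routine Chernoff/independence bookkeeping.
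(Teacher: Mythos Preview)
The paper does not give its own proof of this lemma: it is quoted as a black box from Bentkus~\cite{Bentkus}, in a simplified form, and used later in Lemma~\ref{lem:pairs_at_given_distance}. So there is nothing in the paper to compare your argument against, only the original reference.

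That said, your sketch is a faithful reconstruction of the Bentkus argument and the steps are sound. The reduction to a single-summand comparison via independence and the exponential Markov inequality is standard; the real content is exactly the extremal characterisation of $Y^{[m]}$ you isolate, and your chord-linearisation proof of it is clean. A couple of small remarks: (i) the lemma as stated in the paper writes $\inf_{h\le x}$, whereas your Markov step needs $h\ge 0$; this is harmless because for the ``in particular'' clause one only uses $h=1$, and any $h<0$ would give a vacuous bound anyway. (ii) In verifying $\E\psi(Y^{[m]})=\E\psi(Y)$ you rely on $\psi$ vanishing on $[0,b]$ (in particular $\psi(b)=0$), so that only the tail above $b$ matters, where the two distributions agree by definition; this is indeed the reason the construction of $Y^{[m]}$ puts the threshold precisely at $b$, and it is worth saying so explicitly.
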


\subsection{Euclidean vs.\ graph distances}

Let us start with the following result from~\cite{DMPP-G}.

\begin{theorem}[\cite{DMPP-G}, Theorem 1.1 (ii)]\label{Thm DMPP-G}
Fix $r = r(n) \ge r_0$ and let $G\in \RG$. Then, a.a.s.\ for all pairs of vertices $u,v\in V(G)$ we have $d_G(u, v)\le \left\lceil\tfrac{d_E(u, v)}{r}(1 + \gamma r^{-4/3})\right\rceil$, where
\begin{equation}\label{eq:gamma}
\gamma = \max\left(31\left(\dfrac{2r\log n}{r+d_E(u,v)}\right)^{2/3}, \dfrac{70 \log^2 n}{r^{8/3}}, 300^{2/3}\right).
\end{equation}
\end{theorem}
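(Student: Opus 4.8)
\emph{Proof proposal.} Since the inequality is an upper bound on graph distance, it suffices to construct, for every pair $u,v$, a short $u$--$v$ path, by a scheme that works simultaneously for all $\binom{n}{2}$ pairs a.a.s.; I would work in $\RGp$, where the numbers of vertices in disjoint regions are independent Poissons, and transfer to $\RG$ as in Section~\ref{sec:Poisson}. The mechanism is the standard ``tube'' construction: given $u,v$ with $d_E(u,v)=L$, build a near-straight path inside a thin rectangular tube around the segment $[u,v]$, hopping forward by steps of Euclidean length close to $r$. Two ingredients make the exponent $4/3$ appear: a careful choice of how far forward each hop reaches, and the fact that, to make the scheme succeed for all $\binom{n}{2}$ pairs at once, every region that the construction queries must have area $\Omega(\log n)$ (so that it is non-empty with probability $1-n^{-\omega(1)}$); the latter is also where $r\ge r_0=70\sqrt{\log n}$ is used, since the tube must hold such regions while staying narrow compared to $r$.

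Concretely, use coordinates longitudinal and transverse to $[u,v]$, discretise the possible positions of the ``current'' vertex into a $\mathrm{poly}(n)$-sized grid of cells of area $\Theta(\log n)$, and build the path greedily: from the current vertex $p$, hop to the vertex with the largest longitudinal coordinate among those within graph-distance one of $p$. A vertex reached by a hop whose longitudinal length is $r-\delta$ lies within transverse distance $\approx\sqrt{2r\delta}$ of $p$, so the ``cap'' of $\Bb(p,r)$ with longitudinal offset in $[r-\delta,r]$ has area $\Theta(r^{1/2}\delta^{3/2})$; for this cap to be non-empty uniformly over the grid and over all pairs we need its area to be $\Omega(\log n)$, i.e.\ $\delta=\Theta\bigl((\log^2 n/r)^{1/3}\bigr)$. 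Hence each hop advances the longitudinal coordinate by at least $r-\Theta((\log^2 n/r)^{1/3})$, and summing over the hops yields
\[
d_G(u,v)\ \le\ \left\lceil \frac{L}{r}\Bigl(1+\Theta\bigl((\log n)^{2/3} r^{-4/3}\bigr)\Bigr)\right\rceil ,
\]
which already has the shape of the claimed bound --- the exponent $4/3$ being exactly the output of the $\delta\sim(\log^2 n/r)^{1/3}$ optimisation of the cap. The transverse coordinate drifts by $O(\sqrt{r\delta})=O((r\log n)^{1/3})$ per hop, hence by $O(\sqrt{L/r}\,(r\log n)^{1/3})$ over the whole path, so a tube of that width suffices (near the sides of the square one reflects the tube inward, at a cost of only constant factors).

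To recover the exact $\gamma$ one optimises the tube width and the cell shape as functions of \emph{both} $r$ and $L$: for large $L$ a wider tube lets the accumulated longitudinal ``waste'' be amortised over many hops, shrinking the error term (the first, decreasing-in-$d_E(u,v)$ term of $\gamma$); for $r$ close to $r_0$ the area-$\Omega(\log n)$ constraint dominates and produces the $70\log^2 n/r^{8/3}$ term; and $300^{2/3}$ absorbs the $O(1)$ slack from rounding and from the two ends of the path. I expect the main obstacle to be precisely this last layer: keeping the error term as small as $(\log n)^{2/3}r^{-4/3}$ although every queried region is forced to have area $\Omega(\log n)$ (because the construction must be uniform over all $\binom{n}{2}$ pairs), and in particular getting the improved $L$-dependence for long paths, which needs a global, multi-scale accounting of the longitudinal waste rather than the hop-by-hop estimate above.
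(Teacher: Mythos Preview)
The paper does not prove this theorem: it is quoted verbatim as Theorem~1.1(ii) of \cite{DMPP-G} and used as a black box, so there is no ``paper's own proof'' to compare against. Your sketch is the right template --- the tube-and-cap construction around the segment $[u,v]$, with the cap area $\Theta(r^{1/2}\delta^{3/2})$ forced to be $\Omega(\log n)$, is exactly how the exponent $4/3$ arises in \cite{DMPP-G} --- and your identification of the main remaining difficulty (the $L$-dependent first term of $\gamma$, which requires a more global accounting than the hop-by-hop greedy estimate) is accurate. One correction: the transverse drift is not $O(\sqrt{L/r}\,(r\log n)^{1/3})$ by a random-walk argument, since the hops are not mean-zero in the transverse direction unless you force them to be; in \cite{DMPP-G} this is handled by choosing the next vertex in a cap that is \emph{centred} on the axis (or by a two-scale scheme that periodically recentres), and getting this right is part of what produces the precise form of $\gamma$.
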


As we plan to investigate $\RT$ instead of $\RG$, we need to adapt the above theorem to the torus metric. Fortunately, the adjustment is straightforward. 

\begin{corollary}\label{cor:distances}
Fix $r = r(n) \ge r_0$ and let $G\in \RT$. Then, a.a.s.\ the following property holds for all pairs of vertices $u,v\in V(G)$:
$$
d_G(u, v)\le \left\lceil\dfrac{d_T(u, v)}{r}(1 + \gamma r^{-4/3})\right\rceil,
$$
where $\gamma$ is defined in~\eqref{eq:gamma}.
\end{corollary}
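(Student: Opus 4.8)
The plan is to reduce the statement to Theorem~\ref{Thm DMPP-G} by a translation trick, based on two elementary observations. First, since $d_T(x,y)\le d_E(x,y)$ for all $x,y$, every edge of the Euclidean graph built on a given set of points is also an edge of the torus graph on the same points; consequently, if $\mathcal G$ denotes the Euclidean (i.e.\ $\RG$-type) graph on the points of $G$, then $d_{G}(u,v)\le d_{\mathcal G}(u,v)$ for every pair of vertices. Second, the model $\RT$ is invariant under translations of the torus: for any fixed $\tau\in T_n$, shifting all $n$ points by $\tau$ preserves all torus distances, hence leaves the graph $G$ unchanged, whereas the $n$ shifted points, viewed as elements of the fundamental square $[0,\sqrt n)^2$, are again independent and uniform, so the Euclidean graph $\mathcal G_\tau$ on them is distributed as $\RG$. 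Now fix a pair $u,v$ and suppose $\tau$ is such that the $T_n$-geodesic between $u$ and $v$ does not ``wrap around'', i.e.\ after shifting by $\tau$ the canonical representatives of $u$ and $v$ in $[0,\sqrt n)^2$ are at Euclidean distance exactly $d_T(u,v)$. Applying Theorem~\ref{Thm DMPP-G} to $\mathcal G_\tau$ (with the same $r\ge r_0$ and the same $n$) we get, on the good event for $\mathcal G_\tau$, that $d_{\mathcal G_\tau}(u,v)\le \lceil \tfrac{d_T(u,v)}{r}(1+\gamma r^{-4/3})\rceil$; here the parameter $\gamma$ produced by \eqref{eq:gamma} for the pair $u,v$ in $\mathcal G_\tau$ depends on the distance realised in $\mathcal G_\tau$, which is $d_T(u,v)$, so it is precisely the $\gamma$ in the statement of the corollary. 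Combining with $d_G(u,v)\le d_{\mathcal G_\tau}(u,v)$ gives the desired bound for that pair.

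It remains to pick, once and for all, a finite family of translations such that every pair is ``non-wrapping'' for at least one member. I would use the nine translations $\tau_{i,j}=\tfrac{\sqrt n}{3}(i,j)$ with $i,j\in\{0,1,2\}$. The displacement of any $T_n$-geodesic has length at most $\sqrt{n/2}$, so along each of the two coordinate circles (of circumference $\sqrt n$) the ``short'' arc joining $u$ to $v$ has length at most $\sqrt n/2$; since $\sqrt n/2<\tfrac{2}{3}\sqrt n$, such an arc cannot contain all three of the points $\{0,\tfrac{1}{3}\sqrt n,\tfrac{2}{3}\sqrt n\}$ (which is closed under negation modulo $\sqrt n$), so for at least one $i\in\{0,1,2\}$ shifting the first coordinate by $\tfrac{i}{3}\sqrt n$ makes the geodesic not wrap in the first coordinate, and independently for at least one $j$ in the second coordinate; then $\tau_{i,j}$ is non-wrapping for this pair. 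By Theorem~\ref{Thm DMPP-G} and a union bound over these nine translations, a.a.s.\ all of the graphs $\mathcal G_{\tau_{0,0}},\dots,\mathcal G_{\tau_{2,2}}$ satisfy the conclusion of that theorem simultaneously; on this event the previous paragraph gives the claimed inequality for every pair $u,v$ of $G$. By de-Poissonization, cf.\ Subsection~\ref{sec:Poisson}, the analogous bound holds for $G\in\RTp$ as well.

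The only genuine point is the one just handled: one cannot quote Theorem~\ref{Thm DMPP-G} directly, because it bounds graph distance in terms of the \emph{Euclidean} distance in the square, while on the torus a shortest path may wrap around a side; the trick is that a bounded number of translates of the point process reduces every pair to a configuration in which the relevant geodesic is genuinely a Euclidean segment, and — crucially — each translate is still exactly an instance of $\RG$, so no re-examination of the proof of Theorem~\ref{Thm DMPP-G} is needed. The remaining ingredients (the monotonicity $d_G\le d_{\mathcal G}$, the identification of $\gamma$, and the union bound over a constant number of events) are routine.
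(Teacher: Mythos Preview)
Your proof is correct and follows essentially the same approach as the paper: cover the torus by finitely many translated fundamental domains, observe that each translated point set is distributed as $\RG$ and that the Euclidean graph on it is a subgraph of $G$, apply Theorem~\ref{Thm DMPP-G} to each translate, and take a union bound. The only cosmetic difference is that the paper shifts by multiples of $\sqrt n/2$ (nine windows, four of them distinct) whereas you shift by multiples of $\sqrt n/3$; both choices ensure that every torus geodesic lies entirely inside at least one translate.
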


\begin{proof}
We generate $\RT$ and nine copies of $\RG$ that will be coupled in the following way. Start with $n$ random points $x_1, x_2, \ldots,x_n$ in the square $\left[0,\sqrt{n}\right]^2$ sampled independently and uniformly. We use these points to generate $G \in \RT$. We stay with these $n$ points on the torus, and then translate our $\sqrt{n}\times \sqrt{n}$-window by the vector $(i \sqrt{n}/2, j \sqrt{n}/2)$ for some $i,j \in \{-1,0,1\}$; in other words, we consider the square $[i \sqrt{n}/2, \sqrt{n} + i \sqrt{n}/2] \times [j \sqrt{n}/2, \sqrt{n} + j \sqrt{n}/2]$. In fact, for example, the squares corresponding to $(i,j)=(-1,-1)$ and to $(i,j)=(1,1)$ coincide but it will be convenient to keep 9 squares instead of~4. Indeed, for any two points $u,v$ in the original square, the toroidal distance between $u$ and $v$ is the minimum distance between $u$ (taken in the original square) and all 9 images of the vertex $v$ under the above translations. Each of these 9 choices yields one copy of $G_{ij} \in \RG$. 

Since we aim for a statement that holds a.a.s., we may assume that for each $G_{ij}$, the statement of Theorem~\ref{Thm DMPP-G} is satisfied. Since we have 10 graphs and 9 squares (one graph for each of the images of the square $[0, \sqrt{n}]^2$ under the above translations, and the graph on $T_n$), we will use superscripts to indicate which graph/square we consider. Consider any pair of vertices $u,v\in V(G)$. Clearly, for some $G_{ij}$ we have $d_T(u, v) = d_E^{G_{ij}}(u, v)$. Indeed, the shortest segment $uv$ in $T_n$ is contained in some square of side length $\sqrt{n}/2$, and any such square is contained in some of the nine squares $([i \sqrt{n}/2, \sqrt{n} + i \sqrt{n}/2] \times [j \sqrt{n}/2, \sqrt{n} + j \sqrt{n}/2])_{i,j\in \{-1,0,1\}}$. Also, since $G_{ij}$ is a subgraph of $G$, $d_G^G(u, v) \le d_G^{G_{ij}} (u, v)$.  Combining these observations together we get that
$$
d_G^G(u, v) \le d_G^{G_{ij}} (u, v) \le \left\lceil\dfrac{d_E^{G_{ij}}(u, v)}{r}(1 + \gamma r^{-4/3})\right\rceil = \left\lceil\dfrac{d_T(u, v)}{r}(1 + \gamma r^{-4/3})\right\rceil.
$$
The proof of the corollary is finished. 
\end{proof}

We will also need the following simple but useful observation.
\begin{observation}\label{obs:vertices}
Let $G\in \RT$. Then, a.a.s.\ for any point $x \in T_n$ there exists a vertex $v_i \in V(G)$ such that $d_T(x,v_i) \le 2 \sqrt{\log n}$.
\end{observation}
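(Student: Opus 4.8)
The plan is a routine covering (first-moment) argument, so I do not anticipate a genuine obstacle. First I would partition $T_n$ into a grid of $n/\log n$ axis-parallel squares, each of side length $\sqrt{\log n}$ (rounding as allowed by the convention of Subsection~\ref{sec:Poisson}'s surrounding discussion). For a fixed square $Q$ of this grid, each of the $n$ points defining $G$ lies in $Q$ independently with probability $\log n/n$, so the number of vertices of $G$ inside $Q$ is distributed as $\mathrm{Bin}(n,\log n/n)$; in particular $\Prob(Q\cap V(G)=\emptyset)=(1-\log n/n)^n\le e^{-\log n}=1/n$.

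Next I would take a union bound over all $n/\log n$ squares of the grid: the probability that at least one of them contains no vertex of $G$ is at most $(n/\log n)\cdot(1/n)=1/\log n=o(1)$. Hence a.a.s.\ every square of the grid contains at least one vertex of $G$. Condition on this event. Any point $x\in T_n$ belongs to (the closure of) some square $Q$ of the grid; since its side length $\sqrt{\log n}$ is $o(\sqrt n)$, there is no ``wrap-around'' inside $Q$, so $d_T$ restricted to $Q$ coincides with the Euclidean metric and the $d_T$-diameter of $Q$ equals $\sqrt{2\log n}$. Picking any vertex $v_i\in Q\cap V(G)$ then yields $d_T(x,v_i)\le \sqrt{2\log n}<2\sqrt{\log n}$, as required.

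The only mild points to be careful about are the choice of cell size — small enough that the union bound over the $n/\log n$ cells still gives $o(1)$, yet with diameter safely below $2\sqrt{\log n}$ — and the remark that the torus and Euclidean metrics agree on such a small cell; neither causes real difficulty, and the slack between $\sqrt{2\log n}$ and $2\sqrt{\log n}$ absorbs any rounding. One could alternatively invoke the Poissonised model $\RTp$ of Subsection~\ref{sec:Poisson} to make the cell occupancies genuinely independent, but this is not needed here.
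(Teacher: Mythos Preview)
Your proof is correct and follows essentially the same approach as the paper: tessellate $T_n$ into roughly $n/\log n$ small squares of side length about $\sqrt{\log n}$, show via a union bound that a.a.s.\ every square contains a vertex, and conclude using the diameter of a square. The only cosmetic difference is that the paper uses side length $(1.1+o(1))\sqrt{\log n}$ to get an emptiness probability of $o(1/n)$ per cell (rather than your $\le 1/n$), but your tighter choice still leaves the union bound at $1/\log n=o(1)$ and the diameter safely below $2\sqrt{\log n}$.
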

\begin{proof}
Fix $k = \lfloor\sqrt{n / \log n} / 1.1\rfloor$. Tessellate $T_n$ into $k^2$ small squares, each of side length $\sqrt{n} / k = (1.1+o(1)) \sqrt{\log n}$. The probability that a given small square contains no vertex is equal to
$$
\left( 1 - \frac {(\sqrt{n} / k)^2 }{n} \right)^n \le \exp \Big(- (\sqrt{n} / k)^2 \Big) = \exp \Big( - (1.21+o(1)) \log n \Big) = o(n^{-1}).
$$
Since there are $k^2 = o(n)$ small squares, it follows from the union bound over all small squares that a.a.s.\ each small square contains a vertex. Since we aim for a statement that holds a.a.s., we may assume that this property holds and then the conclusion follows deterministically. Indeed, since $1.1 \sqrt{2} < 2$, for any point $x \in T_n$ the ball $\Bb(x,2 \sqrt{\log n})$ contains at least one square, which implies the result. 
\end{proof}

\section{Upper bound}

This section is devoted to the proof of the upper bounds stated in Theorem~\ref{thm:main}.

\bigskip

Let us start by showing that the cops are able to localize the robber within a square of side length $20000 \, r$ by using only four sensors. We prepare the ground with the following lemma.

\begin{lemma}\label{lem 1}
Fix $r = r(n) \ge r_0$ and let $G\in \RT$. 
Suppose that $\RT$ satisfies the properties stated in Corollary~\ref{cor:distances} and Observation~\ref{obs:vertices}.
Let $s = s(n)$ be such that $20000 \, r\le s \le \sqrt{n}/9$. Suppose that at the beginning of some round the robber occupies a vertex inside a square $S$ of side length $s$ and at Euclidean distance at least $r$ from the border of $S$. Then, the cops may place four sensors so that at the end of the current round the robber is localized within a square of side length $s/4$ and at distance at least $r$ from the border of this square. 
\end{lemma}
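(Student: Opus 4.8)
The plan is to use the four sensors to perform a kind of ``binary search'' on the square $S$ in two orthogonal directions simultaneously. Write $S = [a, a+s] \times [b, b+s]$, and let $\ell_v$ be the vertical line $x = a + s/2$ and $\ell_h$ the horizontal line $y = b + s/2$ bisecting $S$. The key idea is that Corollary~\ref{cor:distances} tells us the graph distance $d_G(u,v)$ is, up to a multiplicative factor $(1 + \gamma r^{-4/3})$ and an additive rounding error of $1$, equal to $d_T(u,v)/r$. Since $s \le \sqrt n/9$, on the relevant scale the torus metric agrees with the Euclidean metric, and since $r \ge r_0 = 70\sqrt{\log n}$, the error term $\gamma r^{-4/3}$ is $o(1)$ — in fact one can check that over a region of diameter $O(s)$ the total distortion between $r \cdot d_G$ and $d_T$ is at most some small multiple of $r$ (this is where the constant $20000$ comes from). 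The plan is therefore: place one sensor $p_1$ far to the ``west'' of $S$ and one sensor $p_2$ far to the ``east'', both roughly on the horizontal line $\ell_h$ (and far enough so that, via Observation~\ref{obs:vertices}, actual vertices exist near the desired locations, and far enough that the circular arcs of their distance level sets are nearly straight lines across $S$); symmetrically place $p_3$ to the ``north'' and $p_4$ to the ``south'' near the vertical line $\ell_v$. The distance from the robber to $p_1$ versus $p_2$ then localizes the robber's $x$-coordinate to within an interval of length roughly $s/4$ (after accounting for the $O(r)$-scale ambiguity coming from rounding and the metric distortion), and similarly the pair $p_3, p_4$ localizes the $y$-coordinate. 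Intersecting these two strips gives a square of side length $s/4$.

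In more detail, I would argue as follows. Choose $p_1$ at Euclidean position roughly $(a - cs, b+s/2)$ and $p_2$ at roughly $(a + s + cs, b + s/2)$ for a suitable absolute constant $c$ (large enough that the level curves $\Cc(p_i, t)$ restricted to $S$ deviate from vertical lines by only $O(r)$, which requires $c$ to be a large constant since the sagitta of an arc of radius $\Theta(s)$ spanning a chord of length $\Theta(s)$ is $\Theta(s)$ — so actually one wants $p_1, p_2$ at distance $\Theta(s / \delta)$ for the deviation to be $\delta s$; taking $\delta$ a small constant and tracking constants carefully is exactly what forces $s \ge 20000 r$ and $s \le \sqrt n/9$). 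Using Observation~\ref{obs:vertices}, replace these ideal positions by genuine vertices within distance $2\sqrt{\log n} \ll r$. Now the robber at vertex $w$ reports $d_G(w, p_1)$ and $d_G(w, p_2)$; by Corollary~\ref{cor:distances} applied to both pairs, and since $p_1, p_2, w$ all lie in a common translate of a square of side $\le \sqrt n/2$ so that $d_T = d_E$ along the relevant geodesics, we get $d_G(w, p_i) = d_E(w, p_i)/r \pm O(1) \pm O(\gamma r^{-1/3})$, and the latter two error terms together are $o(r)$ relative to... no, they contribute an \emph{additive} uncertainty of $O(r)$ to $d_E(w,p_i)$ after multiplying back by $r$. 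Hence from the two reported graph distances the cops learn $d_E(w, p_1)$ and $d_E(w,p_2)$ each up to an additive $O(r)$; since $p_1$ and $p_2$ are nearly antipodal across $S$ in the $x$-direction, knowing both of these distances up to $O(r)$ pins down the $x$-coordinate of $w$ up to an interval of length, say, $s/5$, with the constants chosen so this is comfortably below $s/4$ even after the deviation-from-straightness of the arcs is added in. The same argument with $p_3, p_4$ pins down the $y$-coordinate, and the intersection is a square $S'$ of side $s/4$. Finally, because the robber started at distance $\ge r$ from the border of $S$ and moved at most one step (distance $\le r$ in $T_n$), $w$ is still well inside $S$; and by shrinking the localizing interval by an additional additive $r$ on each side — which the slack between $s/5$ and $s/4$ allows — we may guarantee $S'$ can be taken so that $w$ is at distance $\ge r$ from the border of $S'$, which is needed to iterate.

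The main obstacle I anticipate is bookkeeping the constants so that everything fits inside the stated hypotheses $20000\, r \le s \le \sqrt n/9$. There are three competing sources of error: (i) the additive rounding/ceiling term and the multiplicative $\gamma r^{-4/3}$ distortion from Corollary~\ref{cor:distances}, which after rescaling contributes $O(r)$ plus $O(\gamma r^{-1/3}) \cdot (\text{diam})$ to each Euclidean distance estimate — one must check $\gamma r^{-4/3} \cdot s = o(s)$, indeed $= O(r)$-ish, using $r \ge 70\sqrt{\log n}$ so that the $\log^2 n / r^{8/3}$ and $(r\log n/(r+d_E))^{2/3}$ branches of $\gamma$ are controlled; (ii) the fact that distance \emph{circles}, not lines, are used, so the level sets bow by a sagitta proportional to $s^2/(\text{distance to sensor})$, forcing the sensors to be placed $\Theta(s)$ away with a large constant; (iii) the snapping of ideal sensor positions to actual vertices, which costs only $2\sqrt{\log n} = o(r)$ and is harmless. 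Carefully, each of these is at most a small constant fraction of $s$ once $s \ge 20000 r$, and all the sensor positions and their distance arcs stay within a region small enough ($\le \sqrt n/2$ in diameter, using $s \le \sqrt n/9$ and the constant-factor offsets) that the toroidal and Euclidean metrics coincide along geodesics — so Corollary~\ref{cor:distances}, which is an upper bound on $d_G$, can be combined with the trivial lower bound $d_G(u,v) \ge d_T(u,v)/r$ to sandwich $d_G$ and make the whole scheme work.
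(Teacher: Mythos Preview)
Your overall plan---four sensors yielding four crowns (annuli) whose intersection localizes the robber---is the same as the paper's, but your execution contains a real accounting error and your geometric setup differs from theirs. The error: you repeatedly assert that from $d_G(w,p_i)$ the cops recover $d_E(w,p_i)$ up to an additive $O(r)$. This is false. The crown coming from Corollary~\ref{cor:distances} has width
\[
r d_i - \frac{r(d_i-1)}{1+\gamma r^{-4/3}} \;\approx\; r d_i \cdot \gamma r^{-4/3} + r \;\approx\; \frac{d_E(w,p_i)}{50} + r,
\]
since the best one can say is $\gamma r^{-4/3}\le 1/50$. For $d_E(w,p_i)=\Theta(s)$ this is $\Theta(s)$, not $O(r)$; in particular it is hundreds of times $r$ once $s\ge 20000r$. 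Your remark ``$\gamma r^{-4/3}\cdot s = o(s)$, indeed $=O(r)$-ish'' is exactly where this goes wrong. Relatedly, your plan to kill the sagitta by pushing the sensors out to distance $\Theta(s/\delta)$ for small $\delta$ collides with the hypothesis $s\le\sqrt{n}/9$: to keep torus and Euclidean distances equal between sensors and robber you cannot go farther than about $3.5s$. Your approach is salvageable---with sensors at distance $\approx 3s$ one gets crown width $\approx 0.08s$ and sagitta $\approx s/24$, totaling well under $s/4$---but the argument you actually wrote does not establish this.

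The paper proceeds differently and avoids the sagitta discussion altogether. It places the four sensors at the \emph{corners} of a $3s\times 3s$ square concentric with $S$ (so only distance $s$ beyond each side---comfortably inside the torus constraint), obtains crowns of width at most $s/6$, and then proves a direct geometric claim: the intersection of the two crowns centred at \emph{opposite} corners $B,D$ lies in a strip parallel to the other diagonal $AC$ of width at most $s/4-2r$. This is shown by an elementary Pythagorean-theorem computation on the crossing points of the relevant circles, with no appeal to ``arcs look like straight lines.'' Intersecting the two perpendicular diagonal strips gives the desired $(s/4)$-square. So the paper uses pairs of opposite sensors in an essential way, whereas (with corrected constants) your axis-aligned scheme could in principle localize each coordinate from a \emph{single} sensor's crown plus the sagitta bound---a genuinely different, if messier, route.
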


\begin{proof}
Consider four points $A,B,C,D \in T_n$ that are the four corners of the square $S'$ of side length $3s$, with sides parallel to the sides of $S$, and containing the square $S$ in its center. By our assumption, $\max_{u,v\in S'} d_T(u,v)\le 3\sqrt{2}s \le \sqrt{2n}/3 < \sqrt{n}/2$ (note that $S$ is not necessarily axis-parallel); in particular, the geodesic between any two points in $S'$ is included in $S'$, that is, $S'$ is small enough so that the metric $d_T$ on $T_n$ coincides with the Euclidean metric on the square $S'$. Place sensors at the vertices $v_A, v_B, v_C, v_D$ that are the closest to $A,B,C,D$ (not necessarily in the square $ABCD$), respectively. By Observation~\ref{obs:vertices}, we may find a vertex within Euclidean distance $2 \sqrt{\log n}$ for any choice of points $A,B,C,D$. Let $d_A, d_B, d_C, d_D$ be the graph distances from sensors $v_A, v_B, v_C, v_D$, respectively, to the robber once she makes her move (that is, after step b) of the current round). By our assumption, she is still inside the square $S$.

Now, by Corollary~\ref{cor:distances} and the fact that for all $i\in \{A,B,C,D\}$ we have $r d_G(v_i,R)\ge d_T(v_i, R) = d_E(v_i, R)$, the robber is in the crown 
\begin{equation*}
    \Dd\left(v_i, \dfrac{r (d_i-1)}{1+\gamma r^{-4/3}}, rd_i\right) = \Bb(v_i, rd_i)\setminus \Bb\left(v_i, \dfrac{r (d_i-1)}{1+\gamma r^{-4/3}}\right).
\end{equation*}
Note that the Euclidean distance between any point $i\in\{A,B,C,D\}$ and the position of the robber after she moves is at least $d_E(i, S) = \sqrt{2} s$ (see Figure~\ref{fig 1}). Hence, since $r\ge r_0$ and $s\ge 20000 \, r$, we get that for every $i\in\{A,B,C,D\}$, 
$$
d_i \ge \left\lceil \frac {\sqrt{2} s - 2\sqrt{\log n} }{r} \right\rceil \ge 20000.
$$ 
Moreover, $\gamma r^{-4/3}\le 1/50$: indeed, we have
\begin{eqnarray*}
    31\left(\dfrac{2r\log n}{r+d_E(u,v)}\right)^{2/3}r^{-4/3} &\le& 31\left(\dfrac{2r\log n}{20000r}\right)^{2/3} 70^{-4/3} \log^{-2/3} n = \dfrac{31}{10^{8/3} 70^{4/3}} < \dfrac{1}{50},\\ 
    \dfrac{70 \log^2 n}{r^{8/3}} \, r^{-4/3} &=& \dfrac{70 \log^2 n}{r^4}\le \dfrac{70}{70^4} < \dfrac{1}{50}, \text{ and} \\
    300^{2/3} r^{-4/3} &\ll& \dfrac{1}{50}.
\end{eqnarray*}
Therefore, each of the four crowns 
\begin{equation*}
    \left(\Dd\left(v_i, \dfrac{r (d_i-1)}{1+\gamma r^{-4/3}}, rd_i\right)\right)_{1\le i\le 4}
\end{equation*}
has width 
\begin{equation*}
    rd_i - \dfrac{r(d_i-1)}{1+\gamma r^{-4/3}}\le rd_i/51 + 50r/51\le (3/50 + 1/20000) s \le s/6 - 4 \sqrt{\log n}.
\end{equation*} 
The first and the third inequalities follow from a direct computation, while the second inequality uses the fact that 
\begin{equation*}
    rd_i\le \frac{51}{50} d_E(v_i, R) + r\le \frac{51}{50} (d_E(i, R) + 2\sqrt{\log n}) + r \le \frac{51}{50} (2\sqrt{2} s + r) + r \le \frac{51}{50}\cdot 3s.
\end{equation*}
Since for every $i\in \{A,B,C,D\}$ we have $d_E(i, v_i)\le 2 \sqrt{\log n}$, we get that for any radius $\rho\ge 0$ we have 
\begin{equation*}
   \Dd(v_i, \rho + 2 \sqrt{\log n}, \rho + s/6 - 2 \sqrt{\log n}) \subseteq \Dd(i, \rho, \rho + s/6),
\end{equation*}
so the robber must be hiding inside
\begin{equation*}
    \bigcap_{i\in \{A,B,C,D\}} \Dd(v_i, \rho_i + 2 \sqrt{\log n}, \rho_i + s/6 - 2 \sqrt{\log n})\subseteq \bigcap_{i\in \{A,B,C,D\}} \Dd(i, \rho_i, \rho_i + s/6),
\end{equation*}
where 
\begin{equation}\label{eq:rhoi}
\rho_i = \dfrac{r (d_i-1)}{1+\gamma r^{-4/3}} - 2 \sqrt{\log n}.
\end{equation}

\bigskip 

It remains to show that the four crowns with centers at the corners of the square $ABCD$ intersect in a region, which is contained in a square of side at most $s/4 - 2r$. The next purely geometric claim is the key to our proof of this fact.

\begin{sublemma}\label{claim 1}
Let $\rho_B, \rho_D > 0$ be such that
\begin{equation*}
    \Dd(B, \rho_B, \rho_B + s/6)\cap \Dd(D, \rho_D, \rho_D + s/6)\cap S\neq \emptyset.
\end{equation*}
Then, $\Dd(B, \rho_B, \rho_B + s/6)\cap \Dd(D, \rho_D, \rho_D + s/6)$ is included in a strip, parallel to the diagonal $AC$ and of width at most $s/4 - 2r$.
\end{sublemma}
\begin{proof} 
If $\rho_B+\rho_D+s/6\le |BD|$, then one may easily conclude that 
\begin{equation*}
    \Dd(B, \rho_B, \rho_B + s/6)\cap \Dd(D, \rho_D, \rho_D + s/6)
\end{equation*}
is included in a strip between two lines, parallel to $AC$ and at distance at most $s/6 \le s/4 - 2r$.\par

Otherwise, let $\Cc(B, \rho_B)\cap \Cc(D, \rho_D+s/6) = \{Q', Q''\}$ with $Q', A$ on the same side with respect to $BD$, $\Cc(B, \rho_B+s/6)\cap \Cc(D, \rho_D+s/6) = \{P', P''\}$ with $P', A$ on the same side with respect to $BD$, and $\Cc(B, \rho_B+s/6)\cap \Cc(D, \rho_D) = \{R', R''\}$ with $R', A$ on the same side with respect to $BD$. We know that $Q'Q'' ~||~ P'P'' ~||~ R'R''$, and the three of them are parallel to $AC$. Also, define $P,Q$, and $R$ as the intersection points of $BD$ with the segments $P'P'', Q'Q''$ and $R'R''$, respectively. See Figure~\ref{fig 1} for an illustration.
By the Pythagorean theorem 
\begin{equation*}
    |DP|^2 - |PB|^2 = |DP''|^2 - |P''B|^2 = (\rho_D+s/6)^2 - (\rho_B+s/6)^2,
\end{equation*}
and
\begin{equation*}
    |DQ|^2 - |QB|^2 = |DQ''|^2 - |Q''B|^2 = (\rho_D+s/6)^2 - \rho_B^2.
\end{equation*}
We conclude that
\begin{eqnarray*}
    s^2/36 + \rho_B s/3 &=& (|DQ|^2 - |QB|^2) - (|DP|^2 - |PB|^2)\\ &=& (|DQ|-|QB|)|DB| - (|DP|-|PB|)|DB| \\
    &=& 2\cdot |PQ|\cdot |DB| = 6\sqrt{2}s\cdot |PQ|.
\end{eqnarray*}
Since $\rho_B\le (2\sqrt{2}+1/6)s$ (recall that $\Dd(B, \rho_B, \rho_B + s/6)\cap S\neq \emptyset$), we have that 
\begin{equation*}
    |PQ| \le \dfrac{1}{6\sqrt{2}}\left(\dfrac{2\sqrt{2}}{3} + \dfrac{1}{6\cdot 3} + \dfrac{1}{36}\right)s < 0.121\cdot s.
\end{equation*}
A similar argument implies that 
\begin{equation*}
    |QR| \le \dfrac{1}{6\sqrt{2}}\left(\dfrac{2\sqrt{2}}{3} + \dfrac{1}{6\cdot 3} + \dfrac{1}{36}\right)s < 0.121\cdot s.
\end{equation*}

\begin{figure}
\centering
\begin{tikzpicture}[line cap=round,line join=round,x=1cm,y=1cm]
\clip(-4.5,-3.25) rectangle (5.5,6.25);
\draw [line width=0.8pt] (-4,6)-- (-4,-3);
\draw [line width=0.8pt] (-4,-3)-- (5,-3);
\draw [line width=0.8pt] (-4,6)-- (5,-3);
\draw [line width=0.8pt] (5,-3)-- (5,6);
\draw [line width=0.8pt] (5,6)-- (-4,6);
\draw [line width=0.8pt] (-1,3)-- (-1,0);
\draw [line width=0.8pt] (-1,0)-- (2,0);
\draw [line width=0.8pt] (2,0)-- (2,3);
\draw [line width=0.8pt] (2,3)-- (-1,3);
\draw [line width=0.8pt] (5,-3) circle (7.5cm);
\draw [line width=0.8pt] (-4,6) circle (8cm);
\draw [line width=0.8pt] (-4,6) circle (6cm);
\draw [line width=0.8pt] (5,-3) circle (5.5cm);
\draw [line width=0.8pt] (-2.409807097612819,-1.8403626531683746)-- (3.8403626531683748,4.409807097612819);
\draw [line width=0.8pt] (-1.6424030824705673,0.4825969175294327)-- (1.5174030824705673,3.642403082470567);
\draw [line width=0.8pt] (-0.12249799679358563,-0.9974979967935856)-- (2.9974979967935855,2.1224979967935855);
\begin{scriptsize}
\draw [fill=black] (-4,6) circle (1.5pt);
\draw[color=black] (-4.23162373981443,6.10) node {$D$};
\draw [fill=black] (-4,-3) circle (1.5pt);
\draw[color=black] (-4.23162373981443,-2.85) node {$A$};
\draw [fill=black] (5,-3) circle (1.5pt);
\draw[color=black] (5.187381667244507,-2.85) node {$B$};
\draw [fill=black] (5,6) circle (1.5pt);
\draw[color=black] (5.187381667244507,6.10) node {$C$};

\draw [fill=black] (5,4.5) circle (1.5pt);
\draw [fill=black] (4,6) circle (1.5pt);
\draw [fill=black] (2,6) circle (1.5pt);
\draw [fill=black] (5,2.5) circle (1.5pt);
\draw [fill=black] (-2.409807097612819,-1.8403626531683746) circle (1.5pt);
\draw[color=black] (-2.6021755470320818,-1.5856379557122442) node {$P'$};
\draw [fill=black] (3.8403626531683748,4.409807097612819) circle (1.5pt);
\draw[color=black] (3.677161391007209,4.6539563434787095) node {$P''$};
\draw [fill=black] (-1.6424030824705673,0.4825969175294327) circle (1.5pt);
\draw[color=black] (-1.7940752237822999,0.7326826437748406) node {$R'$};
\draw [fill=black] (1.5174030824705673,3.642403082470567) circle (1.5pt);
\draw[color=black] (1.3720883378029123,3.8723511127944925) node {$R''$};
\draw [fill=black] (-0.12249799679358563,-0.9974979967935856) circle (1.5pt);
\draw[color=black] (-0.2971024938277853,-0.7642900861796769) node {$Q'$};
\draw [fill=black] (2.9974979967935855,2.1224979967935855) circle (1.5pt);
\draw[color=black] (2.7630806974951607,2.3621308365571916) node {$Q''$};
\draw [fill=black] (-0.0625,2.0625) circle (1.5pt);
\draw[color=black] (-0.3235975863933519,2.1) node {$R$};
\draw [fill=black] (0.7152777777777779,1.284722222222222) circle (1.5pt);
\draw[color=black] (0.431512551725297,1.3288222265000909) node {$P$};
\draw [fill=black] (1.4375,0.5625) circle (1.5pt);
\draw[color=black] (1.1601275972783793,0.6) node {$Q$};
\end{scriptsize}
\end{tikzpicture}
\caption{Illustration for the proof of Claim~\ref{claim 1}.}
\label{fig 1}
\end{figure}
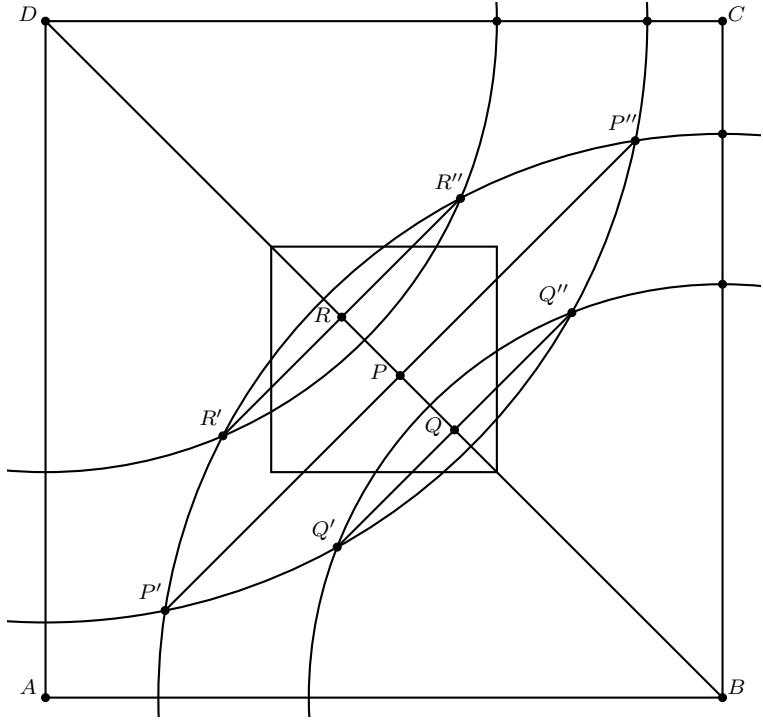

\noindent Thus, the strip between the lines $Q'Q''$ and $R'R''$ contains $\Dd(B, \rho_B, \rho_B + s/6)\cap \Dd(D, \rho_D, \rho_D + s/6)$, and the distance between these two lines is given by $|RQ| = |RP| + |PQ| < 2\cdot 0.121\cdot s = 0.242\cdot s < s/4 - 2r$. The proof of the claim is finished.
\end{proof}

Applying Claim~\ref{claim 1} to the intersection 
\begin{equation*}
\Dd\left(A, \rho_A, \rho_A + s/6 \right) \cap \Dd\left(C, \rho_C, \rho_C + s/6 \right),
\end{equation*}
and to the intersection
\begin{equation*}
\Dd\left(B, \rho_B, \rho_B + s/6 \right) \cap \Dd\left(D, \rho_D, \rho_D + s/6 \right),
\end{equation*}
we get that the intersection of all these four crowns is contained in a square of side $s/4 - 2r$. This square is situated in the center of a larger square with sides parallel to the sides of the smaller square, and of length $s/4$, which finishes the proof of the lemma.
\end{proof}

Now, we put all observations together and show that, throughout several rounds, the cops are able to localize the robber within a square of side length $20000 \, r$ by using only four sensors.

\begin{corollary}\label{cor 4 cops}
Using only four sensors, a.a.s.\ the robber can be localized on $\RT$ within a square of side length $20000 \, r$.
\end{corollary}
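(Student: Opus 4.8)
The plan is to bootstrap Lemma~\ref{lem 1}, which shrinks the side of the square localizing the robber by a factor $4$ per round while preserving the invariant that the robber lies at Euclidean distance at least $r$ from the border of that square. Since Corollary~\ref{cor:distances} and Observation~\ref{obs:vertices} each hold a.a.s., we may condition once and for all on the two deterministic properties they assert --- which are exactly the hypotheses of Lemma~\ref{lem 1} --- after which the whole argument is deterministic and no union bound over rounds is needed. We may also assume $r < \sqrt{n}/20000$: otherwise $20000\,r \ge \sqrt{n}$ and the robber is trivially inside a square of side $20000\,r$, namely (a copy of) the whole torus.

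\emph{Step 1: confining the robber to a square of side $\sqrt{n}/9$.} In a bounded number of initial rounds the cops reduce the problem to a square $S_0$ of side at most $\sqrt{n}/9$ with the robber at distance at least $r$ from $\partial S_0$. For this they place four sensors near four widely separated reference points of $T_n$ --- say the points $(0,0)$, $(\sqrt{n}/2,0)$, $(0,\sqrt{n}/2)$, $(\sqrt{n}/2,\sqrt{n}/2)$, each approximated within distance $2\sqrt{\log n}$ by an actual vertex via Observation~\ref{obs:vertices} --- and read off the four distances. By Corollary~\ref{cor:distances}, for each reference point $i$, either the robber lies within toroidal distance $O(\sqrt{\log n})$ of $i$, in which case she is already confined to a disk of radius $o(\sqrt{n})$ and we are done, or the estimate $\gamma r^{-4/3} \le 1/50$ from the proof of Lemma~\ref{lem 1} applies and she lies in a (toroidal) crown centred at $i$ of width at most $\sqrt{n}/50$. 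Assuming the latter for all four points, the smallest of the four distances identifies the Voronoi cell of the nearest reference point, a square of side $\sqrt{n}/2$ on which --- after splitting according to the signs of the robber's two coordinates relative to that reference point, which fixes the relevant toroidal lifts of all four reference points --- the toroidal metric coincides with the Euclidean one; the four crowns then become thin Euclidean annuli centred at the corners of a square of side $\sqrt{n}/2$, and the geometry of Lemma~\ref{lem 1} (Claim~\ref{claim 1}, adapted to the location of the robber inside that square) confines their intersection to a square of side at most $\sqrt{n}/9$. Shrinking this square slightly, or passing to a suitably shifted family of squares, also secures the distance-$r$ border condition, using $r \ll \sqrt{n}/9$.

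\emph{Step 2: iteration.} Starting from $S_0$, the cops apply Lemma~\ref{lem 1} repeatedly. As long as the current square $S_i$ has side $s_i$ with $s_i \ge 20000\,r$ --- the constraint $s_i \le \sqrt{n}/9$ holding automatically since $s_i \le s_0$ --- Lemma~\ref{lem 1} lets the cops, in one further round, confine the robber to a square $S_{i+1}$ of side $s_i/4$ at distance at least $r$ from $\partial S_{i+1}$; this is precisely the invariant needed to continue, and it also accounts for the robber's move during that round, since Lemma~\ref{lem 1} already does. Let $k$ be the first index with $s_k < 20000\,r$; then $S_k$ is a square of side less than $20000\,r$ containing the robber, and $k = O(\log(\sqrt{n}/r)) = O(\log n)$, so the process terminates. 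This proves the corollary.

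I expect Step 2 to be routine --- it is a mechanical iteration of Lemma~\ref{lem 1} --- and Step 1 to be the main obstacle. In Lemma~\ref{lem 1} the ambient square is small enough that a thrice-enlarged copy still carries the Euclidean metric, so there is no wrap-around; starting from the whole torus this fails, some of the four distance circles genuinely wrap around, and one has to keep track of which toroidal lift of each reference point realizes each distance and verify that the four crowns nonetheless meet in a square of side $\le \sqrt{n}/9$ --- in particular handling the robber being close to a reference point or close to the boundary between two Voronoi cells. This case analysis, rather than any new idea, should be where the work lies.
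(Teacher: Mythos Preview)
Your Step~2 is exactly the paper's argument, and the reduction to the a.a.s.\ hypotheses of Lemma~\ref{lem 1} is handled correctly.

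Step~1, however, has a genuine gap that is not merely ``case analysis''. The four reference points $(0,0)$, $(\sqrt{n}/2,0)$, $(0,\sqrt{n}/2)$, $(\sqrt{n}/2,\sqrt{n}/2)$ are each fixed by the torus isometries $x\mapsto \sqrt{n}-x$ and $y\mapsto \sqrt{n}-y$. Consequently the robber positions $(x,y)$, $(\sqrt{n}-x,y)$, $(x,\sqrt{n}-y)$, $(\sqrt{n}-x,\sqrt{n}-y)$ produce \emph{identical} quadruples of toroidal distances to your four sensors, hence identical graph-distance readings. So one round with this sensor set localizes the robber only up to a four-fold ambiguity; the intersection of the four toroidal crowns has (generically) four connected components, and the cops cannot tell which one contains the robber. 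In particular, your sentence ``splitting according to the signs of the robber's two coordinates relative to that reference point'' presupposes information the cops do not have. This is an intrinsic symmetry obstruction, not something a finer look at Claim~\ref{claim 1} will resolve.

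The paper sidesteps this entirely: instead of four sensors in one round, it uses a $20\times 20$ grid (400 points) probed four at a time over 100 rounds. The robber drifts by at most $100r$ during this phase, and the single grid vertex returning the smallest graph distance must be within Euclidean distance $\sqrt{n}/20$ of her final position --- no crown geometry and no symmetry issue, just the pigeonhole-type bound that she was always near \emph{some} grid point. This yields a square of side $\sqrt{n}/10$, enlarged to $\sqrt{n}/9$ for the border condition, and then Lemma~\ref{lem 1} is iterated as you do. Your Step~1 could likely be repaired with a second round using a shifted (asymmetric) quadruple of sensors to break the four-fold ambiguity, but that argument is not in your proposal and would need to be written out.
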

\begin{proof}
If $r \ge \sqrt{n}/20000$, then there is nothing to prove. Hence, suppose that $r \le \sqrt{n}/20000$. Since we aim for a statement that holds a.a.s., we may assume that $\RT$ satisfies the properties stated in Corollary~\ref{cor:distances} and Observation~\ref{obs:vertices}.

Let $B$ be the set of the $400$ vertices of the $20 \times 20$ square grid of mesh size $\sqrt{n}/20$, covering $T_n$. Construct a set $C$ by adding, for every vertex $h$ of $B$, a vertex of $G$ at distance at most $2\sqrt{\log n}$ from $h$ (the existence of such vertex is guaranteed by Observation~\ref{obs:vertices}; if there are more choices for a given $h$, then we may choose arbitrarily). Let the cops put sensors at the vertices of $C$ in groups of 4, one group after another, so that all vertices are tested in 100 rounds. Trivially, from the first test to the last one, the robber changes her position by at most $100 r$. 

Let $u \in C$ be a vertex that detected the closest graph distance to the robber (if there are many such vertices in $C$, then we select one of them arbitrarily). Our goal is to estimate the Euclidean distance (coinciding with the distance on the torus $T_n$) from $u$ to the robber once testing is finished (that is, after 100 initial rounds). Note that the robber had to be initially at distance at most $\sqrt{2n}/40$ from some point in $B$ and so at distance at most $\sqrt{2n}/40 + 2\sqrt{\log n} = (\sqrt{2}/40 + o(1)) \sqrt{n}$ from some vertex $w$ in $C$. Hence, she is certainly at distance at most $(\sqrt{2}/40 + o(1)) \sqrt{n} + 100r < \sqrt{n}/ 24$ from $w$ when $w$ was probed. More importantly, by Corollary~\ref{cor:distances} (and the computations done in the proof of Lemma~\ref{lem 1}), we know that at that point of the game the graph distance from $w$ to the robber was at most
\begin{equation*}
    \dfrac{51}{50} \cdot \dfrac{\sqrt{n}}{24} \cdot \dfrac{1}{r} + 1.
\end{equation*}
As a result, since $u$ is the sensor that returned the smallest graph distance, the graph distance from $u$ to the position she was when $u$ was probed is at most 
\begin{equation*}
    \dfrac{51}{50} \cdot \dfrac{\sqrt{n}}{24} \cdot \dfrac{1}{r} + 1,
\end{equation*}
and so the Euclidean distance between $u$ and the position of the robber at the end of round 100 is at most
\begin{equation*}
    \dfrac{51}{50} \cdot \dfrac{\sqrt{n}}{24} + r + 100 r < \sqrt{n} / 20.
\end{equation*}
Hence, the cops have a strategy to find a square of side length $\sqrt{n}/10$ in which the robber is located at the end of round $100$. By making the square slightly larger (that is, of side length $\sqrt{n}/9$), we are guaranteed that she is at distance at least $r$ from the border. Finally, we may consecutively apply Lemma~\ref{lem 1} to get the desired property and finish the proof.
\end{proof}

\bigskip

At this point of the game, we may assume that the robber occupies a vertex in a region $\mathcal R$ that is inside a square of side length $20000 \, r$. For a region $\mathcal R$, we define $N[\mathcal R] \subseteq V(G)$ as the subset of the vertices of $G$ contained in the union of all balls of radius $r$, centered at the vertices in $V(G)\cap \mathcal R$. The cops aim to finish the game in the very next round by choosing a set $W$ of vertices to put sensors on such that, regardless where the robber moves, she is going to be localized. In other words, their goal is to partition $N[\mathcal R]$ into equivalence classes with the same $W$-signature such that each class consists of a single vertex (see Subsection~\ref{sec:perfect_information} for a convenient reformulation of the game that explains this line of thinking). In this case, we also say that the set of sensors $W$ \emph{distinguishes} the vertices in the set $N[\mathcal R] \subseteq V(G)$. Trivially, $N[\mathcal R]$ is contained in a square $S$ of side length $20002 \, r$. Of course, the robber plays the game optimally so she can try to ``get trapped'' in a region $\mathcal R$ that is placed in some convenient (for her) part of the square $[0,\sqrt{n}]^2$. Hence, we need to show that, regardless what she does, she will suffer the same fate and lose the game in the very next round.

Let $\mathcal{F}$ be a family of squares of side length $10^5 \, r$, with sides parallel to the axes, and with left-bottom vertices at points $(10^4 \, r \, i, 10^4 \, r \, j)$ for some $i,j \in \N \cup \{0\}$ such that $10^4 \, r \, i < \sqrt{n}$ and $10^4 \, r \, j < \sqrt{n}$. Clearly, $|\mathcal{F}| = O(n / r^2) < n$. For a given square $S \in \mathcal{F}$, let $I(S)$ be defined as the square of side length $10^5 r - 2r$ inside $S$, centered at the same point as $S$ and with sides, parallel to the sides of $S$. We call $I(S)$ the \emph{internal square} of $S$. Clearly, $N[\mathcal R] \subseteq I(S) \subseteq S$ for some $S \in \mathcal{F}$. Hence, in order to finish the proof of the upper bound, it remains to show the following lemma.

\begin{lemma}\label{lem:second_phase}
Fix $r = r(n) \in [r_0,\sqrt{n}/4)$ and let $G\in \RT$. Let 
$$
w = w(n) = 
\begin{cases}
10^{15} \, r^{4/3} & \text{ if } r \ge \log^{3/2} n, \\
3 \cdot 10^{16} \log^2 n & \text{ if } 100 \log n \le r < \log^{3/2} n, \\
2 \cdot 10^{10} r^2 & \text{ if } r < 100 \log n. \\
\end{cases}
$$
Then, a.a.s.\ the following property holds: for any square $S \in \mathcal{F}$, there exists a set of vertices $W=W(S) \subseteq S \cap V(G)$ of cardinality at most $w$ such that placing sensors on $W$ distinguishes all vertices in the internal square $I(S)$, that is, all vertices in $I(S)$ have a unique $W$-signature.
\end{lemma}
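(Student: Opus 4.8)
The plan is to fix a single square $S \in \mathcal{F}$, design a (randomized) placement of at most $w$ sensors that distinguishes all vertices of $I(S)$ with probability $1 - o(1/n)$, and then union-bound over the $|\mathcal{F}| = O(n/r^2) < n$ squares. The sensor set $W = W(S)$ will be split into two parts, $W = W_1 \cup W_2$, of comparable size (each at most $w/2$, say). The first part $W_1$ is a uniformly random sample of vertices of $G$ inside $S$ (or inside a slightly enlarged concentric square, to make sure every vertex of $I(S)$ is far enough from the boundary that the distance estimates of Corollary~\ref{cor:distances} apply cleanly). The second part $W_2$ is a ``clean-up'' set: after $W_1$ is fixed, we look at the partition of $V(G) \cap I(S)$ into classes with equal $W_1$-signature, and we put one extra sensor on each vertex lying in a class of size $\ge 2$; this immediately distinguishes everything, provided the total number of such ``bad'' vertices is at most $w/2$. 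So the whole argument reduces to the estimate: \emph{a.a.s.\ the number of vertices of $I(S)$ that share their $W_1$-signature with some other vertex of $I(S)$ is at most $w/2$}, with failure probability $o(1/n)$.

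The core of the work is therefore a pairwise collision bound. Fix two vertices $u, v \in I(S)$ with $d_T(u,v) = \delta$. I would first argue that if $\delta$ is not too small (say $\delta \ge C r^{4/3}$ for $r$ large, or $\delta$ above the relevant threshold in the other regimes), then there is a large set of ``good'' positions $x$ inside $S$ such that a sensor at $x$ separates $u$ and $v$, i.e.\ $d_G(x,u) \ne d_G(x,v)$. The geometric intuition is that the graph distance from $x$ is, by Corollary~\ref{cor:distances}, within a factor $1 + \gamma r^{-4/3}$ of $d_T(x,\cdot)/r$; so whenever $|d_T(x,u) - d_T(x,v)|$ exceeds the combined additive/multiplicative error terms (on the order of $r$ plus $\gamma r^{-1/3} \cdot d_T(x,\cdot) \asymp (\log n)^{2/3} r^{-1/3} \cdot (\text{diam }S) \asymp r$ in the densest regime), the two graph distances must differ. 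The locus where $|d_T(x,u)-d_T(x,v)|$ is small is a hyperbola-like strip around the perpendicular bisector of $uv$; its area inside $S$ is $O(r \cdot r / \delta \cdot \text{length})$-ish, which is a $o(1)$ fraction of $|S|$ provided $\delta \gg r/\text{(something)}$ — this is exactly where the various thresholds on $r$ (e.g.\ $r \ge \log^{3/2} n$) and the values of $w$ come from, balancing the number of random sensors against the worst-case $\delta$ one still has to handle. For each good $x$, a random sensor of $W_1$ lands in a vertex near $x$ with probability $\Theta(r^2/n)$ times the number of such $x$; so with $|W_1| \asymp w$ random sensors the probability that \emph{no} sensor separates $u$ and $v$ is at most $(1 - \Omega(\text{good-area}/n))^{|W_1|} \le \exp(-\Omega(w \cdot r^2/n \cdot \text{good-area}/r^2))$, which I would arrange to be $\le \exp(-\Omega(\log^2 n))$ or at least small enough that summing over the $O((r^2)^2)$ pairs in $I(S)$ and then over $\mathcal{F}$ still gives $o(1)$.

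There is a genuine difficulty at small separations: pairs $u,v$ with $\delta$ below the threshold simply cannot be separated by the additive-error distance oracle at all, and random sampling is useless for them. These are handled by the clean-up set $W_2$: I need that the number of vertices of $I(S)$ that lie within distance $\delta_0$ (the threshold) of another vertex — equivalently, that fall in a ``bad'' $W_1$-class — is $O(w)$. For the sampling part this follows because, conditioned on $W_1$, a vertex $u$ is ``undistinguished'' only if every other vertex $v \ne u$ in $I(S)$ either is at $d_T(u,v) \le \delta_0$ (there are $O(\delta_0^2)$ of these, a negligible fraction of the $\Theta(r^2)$ vertices of $I(S)$ when $\delta_0 = o(r)$ — but note in the sparse regime $r < 100\log n$ we take $w = \Theta(r^2)$, i.e.\ we are allowed to brute-force essentially all vertices, so there is nothing to prove beyond $|I(S) \cap V(G)| = O(r^2)$, which is a routine Chernoff bound) or is not separated by $W_1$; a first-moment bound on the number of the latter, using the per-pair estimate above, shows it is $o(w)$ a.a.s. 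The main obstacle I anticipate is getting the geometry of the ``separating region'' tight enough in each of the three regimes of $w$ simultaneously — in particular verifying that in the middle regime $100\log n \le r < \log^{3/2} n$ the choice $w = \Theta(\log^2 n)$ really suffices, which requires the error term $\gamma r^{-4/3} \cdot \text{diam}(S)$ to be controlled carefully (here $\gamma$ is dominated by the $70\log^2 n/r^{8/3}$ branch), and checking that the union bound over pairs survives; the large-$r$ regime $r \ge \log^{3/2} n$ with $w = \Theta(r^{4/3})$ should be the cleanest, and the sparse regime is essentially trivial once one knows each $I(S)$ has $\Theta(r^2)$ vertices. I would also need the standard de-Poissonization remark (Subsection~\ref{sec:Poisson}) to make the ``independent Poisson counts in disjoint regions'' reasoning rigorous, or else argue directly in $\RT$ with the usual Chernoff bounds for binomial vertex counts in small squares.
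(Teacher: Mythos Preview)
Your high-level architecture---random sample $W_1$ plus a clean-up set $W_2$ of undistinguished vertices, followed by a union bound over $\mathcal{F}$---matches the paper exactly. The paper calls these sets $X$ and $Y$, samples $X$ by including each vertex of $S$ independently with probability $\delta$ (taking $\delta=r^{-2/3}$, $\delta=\log^2 n/r^2$, and $\delta=1$ in the three regimes), and then uses the probabilistic method on $\E|X|+\E|Y|$ rather than a high-probability bound, but this is cosmetic.

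The substantive gap is in your distinguishing mechanism. You propose to separate $u,v\in I(S)$ using Corollary~\ref{cor:distances}: a sensor at $x$ works when $|d_T(x,u)-d_T(x,v)|$ exceeds the additive/multiplicative error in the graph-distance approximation, and you describe the bad set of $x$ as a hyperbolic strip around the bisector of $uv$. This is both unnecessary and, as written, unsound. The graph distances from a sensor inside $S$ to points of $I(S)$ are bounded integers (at most a few hundred), and the boundary between $D_k(x)$ and $D_{k+1}(x)$ is \emph{not} governed by Euclidean distance alone---it depends on the random point set, as the entire lower-bound section of the paper is devoted to analysing. Your hyperbola picture does not describe this boundary. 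Concretely, your threshold ``$\delta\ge Cr^{4/3}$'' already exceeds the diameter of $I(S)$ for large $r$, so no pair would clear it; and your collision probability $(1-\Omega(\text{good-area}/n))^{|W_1|}$ has the wrong denominator, since sensors are sampled from $S$ (area $\Theta(r^2)$), not from the whole torus.

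The paper avoids all of this with a much simpler observation: a sensor $s\in \Bb(u,r)\triangle\Bb(v,r)$ is adjacent to exactly one of $u,v$, so $d_G(s,u)\ne d_G(s,v)$ trivially. No appeal to Corollary~\ref{cor:distances} is needed. One then just has to (i) lower-bound the number of vertices in $\Bb(u,r)\triangle\Bb(v,r)$ when $d_T(u,v)\ge\eps_c$ (this is Lemma~\ref{lem:vertices_in_crown}, with $\eps_c=12r^{-1/3}$ for $r\ge\log^{3/2}n$ and $\eps_c=12\log n/r$ otherwise), and (ii) upper-bound the number of pairs in $I(S)$ at distance below $\eps_c$ (this is Lemma~\ref{lem:pairs_at_given_distance}). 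With $\delta=r^{-2/3}$ in the dense regime, a pair at distance $\eps\ge\eps_c$ survives with probability at most $(1-\delta)^{\eps r}\le e^{-6\cdot 2^k}$ on the dyadic scale $\eps\sim 2^k\eps_c$, and summing over scales plus adding the $O(r^2\eps_c^2)=O(r^{4/3})$ close pairs gives $\E|Y|=O(r^{4/3})$. The middle regime is identical with the adjusted $\eps_c$ and $\delta$; the sparse regime is, as you correctly note, just the bound $|S\cap V(G)|\le 2\cdot10^{10}r^2$ from Lemma~\ref{lem:pairs_at_given_distance}(a).
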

 
Before diving into the proof of Lemma~\ref{lem:second_phase}, we provide a rough sketch of the main idea. Let us fix $\delta =\delta(n) = o(1)$ (to be chosen appropriately later on). For every square $S \in \mathcal{F}$, the set $W=W(S)$ is constructed as follows: we investigate all vertices in $S$ and independently put them into a set $X$ with probability $\delta$. This set partitions the vertices in the internal square $I(S)$ into equivalence classes with the same $X$-signature. We do not expect each class to contain only one vertex so we investigate all equivalence classes. If some class contains at least two vertices, then we put all vertices from that class into a set $Y$. (In fact, we may put all but one of them into $Y$ but, for simplicity, we include all of them as it would not improve the asymptotic order of the bound.) By construction, the set $W = X \cup Y$ achieves the desired goal of identifying the robber since each non-sensor vertex in $I(S)$ has a unique $X$-signature (otherwise, it would be put into $Y$) and so also a unique $W$-signature, and each sensor vertex in $I(S)$ has a unique $W$-signature as it is the only vertex at distance 0 from itself. Note that, roughly speaking, if $X$ is small, then $Y$ has to be large and vice versa. Hence, at some point we will have to optimize $\delta$ as a function of $r$ since we aim to find a set $W$, which is as small as possible.

\bigskip

In the next observation we investigate $\Bb(A,r)\triangle \Bb(B,r)$, the symmetric difference of two discs centered in $A$ and $B$. We show a lower bound for the area of this symmetric difference which is a non-decreasing function of the distance between $A$ and $B$.

\begin{observation}\label{ob AB}
Fix $r = r(n) < \sqrt{n}/4$ and let $A,B$ be any two points in $T_n$ at distance $\varepsilon$ from each other. If $\eps \le \eps_0 := 2r$, then
\begin{equation*}
| \Bb(A,r)\triangle \Bb(B,r) | = (2\pi - 4\arccos(\varepsilon/2r)) r^2 + 2\varepsilon r \sqrt{1 - \dfrac{\varepsilon^2}{4r^2}} \ge 2\eps r.   
\end{equation*}
In particular, if $\eps \ll r$, then 
\begin{equation*}
| \Bb(A,r)\triangle \Bb(B,r) | = (4+o(1)) \eps r.  
\end{equation*}
On the other hand, if $\eps > \eps_0$, then trivially
$$
| \Bb(A,r)\triangle \Bb(B,r) | = | \Bb(A,r) | + | \Bb(B,r) | = 2 \pi r^2 \ge 2 \eps_0 r.  
$$
\end{observation}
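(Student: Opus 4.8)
The plan is to transfer the problem to the Euclidean plane, where $|\Bb(A,r)\triangle\Bb(B,r)|$ becomes a classical two--disc computation, and then to settle the lower bound by a short monotonicity argument; the hypothesis $r<\sqrt n/4$ enters precisely in the transfer. First I would note that, since $r<\sqrt n/2$, the natural projection $\mathbb R^2\to T_n$ restricts to a measure--preserving bijection from any Euclidean disc of radius $r$ onto the corresponding ball of $T_n$; in particular $|\Bb(A,r)|=|\Bb(B,r)|=\pi r^2$. Fix a lift $\tilde A$ of $A$ and the (unique, since $\eps<\sqrt n/2$) lift $\tilde B$ of $B$ with $|\tilde A-\tilde B|=d_T(A,B)=\eps$, and let $D_A,D_B$ be the Euclidean discs of radius $r$ centred at $\tilde A,\tilde B$. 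When $\eps\le 2r$ I would check that the projection also restricts to a measure--preserving bijection $D_A\cap D_B\to\Bb(A,r)\cap\Bb(B,r)$: any point of the latter has a lift inside $D_A$ and a lift inside $D_B$, and these two lifts differ by a vector of $\sqrt n\,\mathbb Z^2$ whose length is at most the diameter of $D_A\cup D_B$, namely at most $\eps+2r\le 4r<\sqrt n$; since every nonzero vector of $\sqrt n\,\mathbb Z^2$ has length at least $\sqrt n$, the two lifts coincide. This is the only step where $r<\sqrt n/4$ (rather than merely $r<\sqrt n/2$) is used.

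With the transfer in place, inclusion--exclusion gives $|\Bb(A,r)\triangle\Bb(B,r)|=2\pi r^2-2\,|\Bb(A,r)\cap\Bb(B,r)|$. If $\eps>2r$, the balls are disjoint by the triangle inequality, so the symmetric difference has area $2\pi r^2\ge 4r^2=2\eps_0 r$, which is the final inequality of the statement. If $\eps\le 2r$, the lens $D_A\cap D_B$ is the union of two congruent circular segments cut off by the perpendicular bisector of $\tilde A\tilde B$, which lies at distance $\eps/2$ from each centre; the standard circular--segment formula with half--angle $\arccos(\eps/2r)$ gives
$$
|D_A\cap D_B|=2r^2\arccos\big(\tfrac{\eps}{2r}\big)-\eps r\,\sqrt{1-\tfrac{\eps^2}{4r^2}} ,
$$
and plugging this into the inclusion--exclusion identity produces exactly the claimed closed form for $|\Bb(A,r)\triangle\Bb(B,r)|$.

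For the bound $|\Bb(A,r)\triangle\Bb(B,r)|\ge 2\eps r$ when $\eps\le 2r$, I would substitute $t=\eps/(2r)\in[0,1]$ and use $\arccos t=\pi/2-\arcsin t$ to rewrite the area as $4r^2\arcsin t+2\eps r\sqrt{1-t^2}$. Since $\eps r=2r^2t$ and $2\eps r\sqrt{1-t^2}=4r^2t\sqrt{1-t^2}$, the desired inequality reduces to $\arcsin t+t\sqrt{1-t^2}\ge t$ on $[0,1]$. Setting $f(t)=\arcsin t+t\sqrt{1-t^2}-t$, a one--line computation gives $f(0)=0$ and $f'(t)=2\sqrt{1-t^2}-1$, so $f$ increases on $[0,\sqrt{3}/2]$ and decreases on $[\sqrt{3}/2,1]$, with $f(1)=\pi/2-1>0$; hence $f\ge 0$ throughout $[0,1]$. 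The ``in particular'' statement then follows by Taylor--expanding the rewritten area near $\eps/r=0$: it equals $\big(2\eps r+O(\eps^3/r^2)\big)+\big(2\eps r+O(\eps^3/r^2)\big)=(4+o(1))\eps r$ whenever $\eps=o(r)$.

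The argument is routine; the only points demanding attention are making the torus--to--plane reduction fully rigorous (the lattice--length estimate, which pins down why the relevant constant is $\sqrt n/4$), and spotting that $2\pi-4\arccos t=4\arcsin t$, which is what collapses the lower--bound inequality into something a single derivative resolves.
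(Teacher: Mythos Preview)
Your proof is correct and follows essentially the same route as the paper: reduce to the Euclidean plane, compute the lens area via circular segments, and extract the asymptotic by Taylor expansion. Two minor differences are worth noting. First, your torus--to--plane reduction is more careful than the paper's, which simply asserts that $r<\sqrt n/4$ makes $\Bb(A,r)\cap\Bb(B,r)$ connected in $T_n$; your lattice argument is a genuine improvement in rigour. Second, for the lower bound the paper takes a shorter path: it observes $\arccos(\eps/2r)\le \pi/2-\eps/2r$ (equivalently $\arcsin t\ge t$), so the first term alone already gives $(2\pi-4\arccos(\eps/2r))r^2\ge 2\eps r$, and the second term is nonnegative. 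Your derivative argument for $f(t)=\arcsin t+t\sqrt{1-t^2}-t$ is correct but does more work than necessary.
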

\begin{proof}

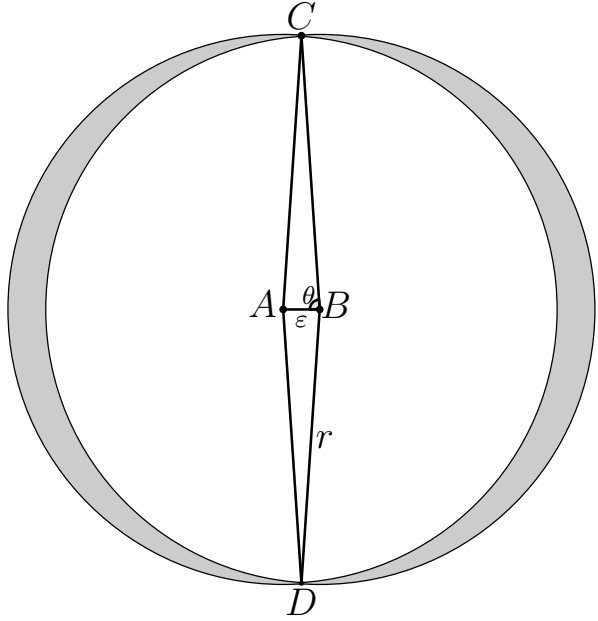
\begin{figure}
\centering
\resizebox{25em}{!}{%
\begin{tikzpicture}[line cap=round,line join=round,x=1cm,y=1cm,scale=0.4]
\clip(-9.7,-9) rectangle (10,8);

\draw [shift={(0,-0.5)},line width=0.8pt,color=black,fill=black,fill opacity=0.10000000149011612] (0,0) -- (93.81407483429037:0.2703580656096896) arc (93.81407483429037:180:0.2703580656096896) -- cycle;

\draw [line width=0.8pt] (-1,-0.5) circle (7.516648189186455cm);
\draw [line width=0.8pt] (0,-0.5) circle (7.516648189186455cm);

\fill[gray!40,even odd rule] (0,-0.5) circle (7.516648189186455cm) (-1,-0.5) circle (7.516648189186455cm);

\draw [line width=0.8pt] (-1,-0.5)-- (0,-0.5);
\draw [line width=0.8pt] (-1,-0.5)-- (-0.5,7);
\draw [line width=0.8pt] (-0.5,7)-- (0,-0.5);
\draw [line width=0.8pt] (-1,-0.5)-- (-0.5,-8);
\draw [line width=0.8pt] (0,-0.5)-- (-0.5,-8);
\begin{scriptsize}
\draw [fill=black] (-1,-0.5) circle (2.5pt);
\draw[color=black] (-1.5623813401027786,-0.35512488171559686) node {\large{$A$}};
\draw [fill=black] (-0.5,7) circle (2.5pt);
\draw[color=black] (-0.5,7.53) node {\large{$C$}};
\draw [fill=black] (0,-0.5) circle (2.5pt);
\draw[color=black] (0.438268345408925,-0.3821606882765659) node {\large{$B$}};
\draw[color=black] (-0.5,-0.82) node {$\varepsilon$};
\draw [fill=black] (-0.5,-8) circle (1.5pt);
\draw[color=black] (-0.5,-8.5) node {\large{$D$}};
\draw[color=black] (0.14087447323826635,-4.0590303805683545) node {\large{$r$}};
\draw[color=black] (-0.3,-0.1) node {$\theta$};
\end{scriptsize}
\end{tikzpicture}}  
\caption{An illustration from the computation in \eqref{eq crown}.}
\label{fig 2}
\end{figure}

Since $r < \sqrt{n}/4$ we have that $\Bb(A,r)\cap \Bb(B,r)$ is a (possibly empty) connected subset of $T_n$. Let $\Cc(A,r) \cap \Cc(B,r) = \{C,D\}$, $\angle CBA = \angle ABD = \theta$, and let $\varepsilon$ be the distance between $A$ and $B$. Suppose that $\eps \le 2r$ since the statement for larger values of $\eps$ is trivial. (An illustration of the configuration may be found on Figure~\ref{fig 2}.) The area of $\Bb(A,r)\triangle \Bb(B,r)$ (the grey region in Figure~\ref{fig 2}) is by a simple inclusion-exclusion formula equal to
\begin{equation*}
    2 \, \dfrac{2\pi - 2\theta}{2\pi}\pi r^2 - 2\, \dfrac{2\theta}{2\pi} \pi r^2 + 2\cdot 2 \, \dfrac{\varepsilon r \sin\theta}{2} = \left(2\pi - 4\arccos\left(\dfrac{\varepsilon}{2r}\right)\right) r^2 + 2\varepsilon r \sqrt{1 - \dfrac{\varepsilon^2}{4r^2}}.
\end{equation*}
The desired bound holds since $\arccos(\eps/2r) \le \pi/2 - \eps/2r$.

If we additionally suppose that $\varepsilon\ll r$, then $\sin(\theta) = 1-o(1)$ and the above equality can be simplified as follows:
\begin{equation}\label{eq crown}
    2 \, \dfrac{2\pi - 2\theta}{2\pi}\pi r^2 - 2\, \dfrac{2\theta}{2\pi} \pi r^2 + (4+o(1)) \dfrac{\varepsilon r}{2} = (2\pi - 4\theta) r^2 + (2+o(1)) \varepsilon r.
\end{equation}
Moreover, $\cos(\theta) = \varepsilon/2r = o(1)$ and thus
\begin{equation*}
\theta = \arccos(\eps/2r) = \dfrac{\pi}{2} - \dfrac{\eps}{2r} - O\left(\dfrac{\eps^3}{8r^3}\right) =  \dfrac{\pi}{2} - (1+o(1))\dfrac{\eps}{2r},
\end{equation*}
and so 
\begin{equation*}
|\Bb(A,r)\triangle \Bb(B,r)| = (2\pi - 4\theta) r^2 + (2+o(1)) \varepsilon r = (4+o(1))\varepsilon r.
\end{equation*}
The proof of the observation is finished.
\end{proof}

Observation~\ref{ob AB} is enough to show that, for any two points $A$ and $B$ on the torus $T_n$, with high probability there are many vertices of $G\in \RT$ in the symmetric difference of $\Bb(A,r)$ and $\Bb(B,r)$ provided that $A$ and $B$ are ``sufficiently far from each other''.

\begin{lemma}\label{lem:vertices_in_crown}
Fix $r = r(n) \in [r_0, \sqrt{n}/4)$ and let $G\in \RT$.
Let 
\begin{equation*}
\eps_c = 
\begin{cases}
12 \, r^{-1/3}, & \text{ if } r \ge \log^{3/2} n, \text{ and}\\
12 \log n / r, & \text{ otherwise.}
\end{cases}
\end{equation*}
Then a.a.s. the following property holds: for any pair of vertices of $G$ with positions $A,B$ such that $d_T(A,B)=\eps \ge \eps_c$, the number of vertices in $\Bb(A,r)\triangle \Bb(B,r)$ is at least $\min(\eps, 2r) r$. 
\end{lemma}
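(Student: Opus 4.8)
The plan is to reduce the statement to a union bound over the $\binom n2$ pairs of vertices of $G$, combining Observation~\ref{ob AB} (to lower bound the area of the symmetric difference) with a Chernoff bound (to lower bound the number of vertices landing inside it). Two elementary facts about $\eps_c$ hold throughout the range $r\in[r_0,\sqrt n/4)$: first, $\eps_c<2r$, since $12r^{-1/3}<2r$ whenever $r>6^{3/4}$ and $12\log n/r<2r$ whenever $r^2>6\log n$, both guaranteed by $r\ge r_0=70\sqrt{\log n}$; second, $\eps_c\,r\ge 12\log n$, since $\eps_c r=12r^{2/3}\ge 12(\log^{3/2}n)^{2/3}=12\log n$ when $r\ge\log^{3/2}n$ and $\eps_c r=12\log n$ otherwise. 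Hence, for any $A,B\in T_n$ at distance $\eps\ge\eps_c$, writing $m:=\min(\eps,2r)\,r$, the first fact gives $m\ge\eps_c r\ge 12\log n$, and Observation~\ref{ob AB} gives $|\Bb(A,r)\triangle\Bb(B,r)|\ge 2\min(\eps,2r)\,r=2m$ (this is the first displayed lower bound there when $\eps\le 2r$, and follows from $2\pi r^2\ge 4r^2$ when $\eps>2r$).

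For each pair $i<j$, let $E_{ij}$ denote the event that $d_T(x_i,x_j)\ge\eps_c$ while fewer than $\min(d_T(x_i,x_j),2r)\,r$ vertices of $G$ lie in $\Bb(x_i,r)\triangle\Bb(x_j,r)$. To bound $\Prob(E_{ij})$, I would condition on the positions $x_i=p$ and $x_j=q$; this only makes $E_{ij}$ nonempty when $d_T(p,q)\ge\eps_c$, and in that case the remaining $n-2$ points are independent of $p,q$ and i.i.d.\ uniform on $T_n$, so the number of them contained in the now fixed region $\Bb(p,r)\triangle\Bb(q,r)$ is distributed as $\mathrm{Bin}(n-2,\mu/n)$ with $\mu:=|\Bb(p,r)\triangle\Bb(q,r)|\ge 2m$, $m=\min(d_T(p,q),2r)\,r\ge 12\log n$; and the total count of vertices of $G$ in the region is at least this Binomial. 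Since $\mu\le 2\pi r^2< n/2$, its mean $\nu:=(n-2)\mu/n$ exceeds $2m-1$, and applying~\eqref{chern} with deviation $t=\nu-m$ — using that $\nu\mapsto(\nu-m)^2/(2\nu)$ is increasing for $\nu>m$, so that $t^2/(2\nu)\ge (m-1)^2/\big(2(2m-1)\big)=(1-o(1))\,m/4$ — yields $\Prob(E_{ij}\mid x_i=p,x_j=q)\le\exp\!\big(-(1-o(1))\,m/4\big)\le\exp\!\big(-(1-o(1))\,3\log n\big)=n^{-3+o(1)}$. Averaging over $p,q$ gives $\Prob(E_{ij})\le n^{-3+o(1)}$.

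Finally, a union bound over the $\binom n2<n^2$ pairs gives $\Prob\big(\bigcup_{i<j}E_{ij}\big)\le n^{2}\cdot n^{-3+o(1)}=o(1)$, so a.a.s.\ no pair $i<j$ is bad, which is exactly the property in the statement. The only delicate point is quantitative: the Chernoff estimate has to beat the $\sim n^{2}$ pairs in the union bound, i.e.\ we need $\Prob(E_{ij})=o(n^{-2})$, and this is precisely what forces the choice of $\eps_c$ with $\eps_c\,r\ge 12\log n$, which after the Chernoff bound becomes $n^{-3+o(1)}$. (One could equivalently run the same estimate in $\RTp$, where conditioning on two points of the process leaves an independent unit-intensity Poisson process and the count in the symmetric difference is exactly Poisson distributed, and then transfer the conclusion back to $\RT$ by de-Poissonization.)
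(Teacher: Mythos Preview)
Your proof is correct and follows essentially the same approach as the paper: lower bound the area of the symmetric difference via Observation~\ref{ob AB}, stochastically bound below the number of the remaining $n-2$ vertices falling inside by a binomial, apply Chernoff's bound~\eqref{chern} to obtain a failure probability of $n^{-3+o(1)}=o(n^{-2})$, and take a union bound over all $\binom{n}{2}$ pairs. The paper's write-up is slightly terser (it simply says the count is stochastically bounded below by $\mathrm{Bin}(n-2,a/n)$ with $a=2m$ and applies Chernoff with $t=(1+o(1))a/2$), but the content is the same, including the observation that $\eps_c r$ is chosen precisely so that the Chernoff exponent is at least $(3+o(1))\log n$.
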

\begin{proof}
Consider the positions $A,B$ of \emph{any} two vertices of $G$ at distance $\eps \ge \eps_c$ from each other. 
By Observation~\ref{ob AB}, $| \Bb(A,r)\triangle \Bb(B,r) | \ge a:= 2 \min(\eps, 2r) r$. Hence, the number of vertices in the symmetric difference can be stochastically bounded from below by a random variable $X \sim \textrm{Bin}(n-2, a/n)$ with $\E X = (n-2)a/n = (1+o(1)) a$. Note that if $r \ge \log^{3/2} n$, then $a \ge 2 \eps_c r = 24 r^{2/3} \ge 24 \log n$; otherwise, since $\varepsilon_c \le 12 \log n/ r_0 \le 2r_0\le 2r$, it is also true that $a \ge 2 \eps_c r = 24 \log n$.
In either case, it follows from the Chernoff's bound~(\ref{chern}), applied with $t = \E X - a/2 = (1+o(1)) a/2$, that 
\begin{eqnarray*}
\Prob(X \le a/2) &=& \Prob( X \le \E X - t ) \le \exp \left( - \frac {t^2}{2 \E X} \right) = \exp \left( - \left( \frac {1}{8}+o(1) \right) a \right) \\
&\le& \exp \left( - (3+o(1)) \log n \right) = o \left( \frac {1}{n^2} \right).
\end{eqnarray*}
The lemma holds by a union bound over all pairs of vertices.
\end{proof}

The next lemma controls the number of pairs of vertices at a given distance from each other.

\begin{lemma}\label{lem:pairs_at_given_distance}
Fix $r = r(n) \in [r_0, \sqrt{n}/4)$ and let $G\in \RT$. Then, a.a.s.\ the following properties hold for all squares $S \in \mathcal{F}$.
\begin{itemize}
\item [(a)] The number of vertices in $S$ is at most $2 \cdot 10^{10} r^2$. 
\item [(b)] Let $\eps_c = 12\, r^{-1/3}$. If $r \ge \log^{3/2} n$, then for any given $k = k(n) \in \N \cup \{0\}$ satisfying $\eps = \eps(k) := 2^k \varepsilon_c \le r^{-0.1}$, the number of pairs of vertices in $I(S)$ that are at distance at most $\eps$ from each other is at most $2 \cdot 10^{12} r^2 \eps^2$.
\item [(c)] Let $\eps_c = 12\, \log n / r$. If $\log^{5/4} n \le r < \log^{3/2} n$, then for any given $k = k(n) \in \N \cup \{0\}$ satisfying $\eps = \eps(k) := 2^k \varepsilon_c \le r^{-0.1}$, the number of pairs of vertices in $I(S)$ that are at distance at most $\eps$ from each other is at most $2 \cdot 10^{12} r^2 \eps^2$.
\item [(d)] Let $\eps_c = 12\, \log n / r$. If $100 \log n \le r < \log^{5/4} n$, then the number of pairs of vertices in $I(S)$ that are at distance at most $\eps_c$ from each other is at most $10^{16} \log^2 n$.
\end{itemize}
\end{lemma}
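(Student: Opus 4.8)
The four parts share a skeleton: a first--moment computation identifying the typical number of the relevant pairs, a concentration step exploiting independence across a tessellation, and a union bound over the $|\mathcal F| = O(n/r^2) < n$ squares (and, in (b) and (c), over the $O(\log n)$ admissible values of $k$). I would run the whole argument in the Poissonised model $\RTp$ and transfer back to $\RT$ by the de-Poissonisation of Section~\ref{sec:Poisson}; this costs a factor $\sqrt n$ in the failure probability, which is immaterial since all bounds below are of the form $\exp(-\Omega(\log^2 n))$.

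Part (a) is immediate: the number of vertices in a fixed $S \in \mathcal F$ is binomial with mean $|S| = 10^{10} r^2 \ge 10^{10}\cdot 4900\log n$ (as $r \ge r_0$), so by~\eqref{chern1} it exceeds $2|S|$ with probability $\exp(-\Omega(r^2)) = \exp(-\Omega(\log n)) = o(1/n)$, and a union bound over the $< n$ squares finishes the claim. I would then use part~(a) freely in (b)--(d).

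For (b)--(d), fix $S \in \mathcal F$ and an admissible radius $\eps$ (namely $\eps = \eps_c$ in (d), or $\eps = 2^k\eps_c$ in (b) and (c)), and let $Z$ be the number of pairs of vertices of $I(S)$ at distance at most $\eps$. \emph{First moment.} Tessellating $I(S)$ into cells of side $\eps$, every such pair lies in one of the at most nine cells equal or adjacent to a fixed cell, and since $\eps \le r^{-0.1}$ the mean occupancy $4\eps^2$ of a $2\eps\times 2\eps$ block is $o(1)$; a direct computation then gives $\E Z = \Theta(r^2\eps^2)$ in (b), (c) and $\E Z = \Theta(\log^2 n)$ in (d) (using $r\eps_c = 12\log n$), in every case below the claimed bound by a large absolute constant, and with $\E Z \ge r^2\eps_c^2 = \Omega(\log^2 n)$ --- the slack that makes the union bound viable. \emph{Concentration.} Each pair at distance $\le\eps$ has both endpoints in a single block of a tiling of the plane into $2\eps\times 2\eps$ squares, for at least one of the four integer-shifted such tilings; hence $Z \le \sum_{s}\sum_B \binom{n_B}{2}$, where $s$ ranges over the four shifts, $B$ over the blocks of the shift $s$, and $n_B = |V\cap B|$. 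For a fixed shift the blocks are disjoint, so by the Poisson structure the variables $\binom{n_B}{2}$ are independent, with common mean $\Theta(\eps^4)$ and $\Theta(r^2/\eps^2)$ of them. I would then (i) show, by a union bound over blocks using $\Prob(\mathrm{Poi}(\mu)\ge L) \le (e\mu/L)^L$, that with probability $1 - o(1/(n\log n))$ no block has more than $L = L(n)$ points, for a suitably slowly growing $L$; and (ii) apply Lemma~\ref{bentkus} to the resulting truncated sum of independent variables (now bounded by $\binom L2$) with a large constant, obtaining $\Prob\big(Z \ge 2\cdot 10^{12} r^2\eps^2\big) \le \exp(-\Omega(r^2\eps^2)) = \exp(-\Omega(\log^2 n))$ (and the analogue with $10^{16}\log^2 n$ in (d)). Summing over the $< n$ squares and the $O(\log n)$ values of $k$ concludes.

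The main obstacle is the concentration step, for two intertwined reasons. First, $\binom{n_B}{2}$ has a super-exponential upper tail --- its exponential moment is infinite --- so it cannot be fed directly into Lemma~\ref{bentkus}; one must truncate at a level $L$ that is simultaneously large enough that ``some block exceeds $L$'' has probability $o(1/(n\log n))$ over all of $T_n$, and small enough that the exponential moment of the truncated variable, hence the factor $\E e^{T}$ in Lemma~\ref{bentkus}, stays $\exp(O(\E Z))$. Second, in the worst regimes --- the low ends $r\asymp\log^{3/2}n$ of (b), $r\asymp\log^{5/4}n$ of (c), and all of (d) --- the mean $\E Z = \Theta(\log^2 n)$ is only polylogarithmic, so neither $L$ nor the per-block cap $\binom L2$ is negligible against $\E Z$; this is precisely why the statement tolerates the enormous constants $2\cdot 10^{12}$ and $10^{16}$, which provide exactly the room needed to absorb the contribution of the (few) heavy blocks while keeping the deviation probability below $1/(n\log n)$.
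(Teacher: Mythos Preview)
Your overall architecture---Poissonise, tessellate each $S$ into $2\eps\times 2\eps$ blocks with four shifted copies, bound $Z$ by $\sum_s\sum_B\binom{n_B}{2}$, union-bound over $|\mathcal F|<n$ squares and $O(\log n)$ values of $k$---matches the paper exactly, and part~(a) is fine. The gap is in the concentration step, precisely at the point you flag as the ``main obstacle'': the two constraints you impose on the truncation level $L$ are in fact incompatible in the critical regimes, so the argument as written does not close.

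Take the worst case of (b), $r=\log^{3/2}n$ and $\eps=\eps_c=12r^{-1/3}$, so the block mean is $\mu=4\eps^2=576/\log n$. There are $\Theta(\log^4 n)$ blocks per shift per square, so for ``no block exceeds $L$'' to hold with probability $1-o(1/(n\log n))$ you need $L\log(L/(e\mu))\gtrsim\log n$; since $\log(1/\mu)\sim\log\log n$, this forces $L\gtrsim \log n/\log\log n$. On the other side, for $\E e^{hT}=\exp(O(\E Z))$ you need $e^{h\binom{L}{2}}\mu^{L-2}=O(1)$, i.e.\ $hL\lesssim\log\log n$; with $L\gtrsim\log n/\log\log n$ this forces $h\lesssim(\log\log n)^2/\log n\to 0$. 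Feeding such a tiny $h$ into Lemma~\ref{bentkus} gives $\Prob(Z\ge C\,\E Z)\le\exp((e^h-hC)\,\E Z)$, which is small only when $C\gtrsim 1/h\to\infty$---contradicting the fixed constant $2\cdot 10^{12}$ in the statement. The same arithmetic bites at the low ends of (c) and throughout (d).

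The paper sidesteps this by \emph{not} working with $\binom{n_B}{2}$ at all. Instead it stratifies by occupancy: for each $\ell\ge 2$ it sets $Z_s^{\ge\ell}:=n_s\cdot\mathds 1_{n_s\ge\ell}$ (a \emph{vertex} count, not a pair count) and applies Lemma~\ref{bentkus} directly to $S^{\ge\ell}=\sum_s Z_s^{\ge\ell}$ with $h=1$ and multiplier $c=e^\ell$. Because $Z_s^{\ge\ell}$ skips the values $1,\dots,\ell-1$, one gets $\E e^{S^{\ge\ell}}\le\exp((e^\ell/\ell+o(1))\,\E S^{\ge\ell})$, and hence $\Prob(S^{\ge\ell}\ge e^\ell\,\E S^{\ge\ell})\le\exp(-(e^2/2+o(1))\,\E S^{\ge\ell})=o(1/n^2)$ whenever $\E S^{\ge\ell}\ge\log n$. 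One runs this for $\ell=2,\dots,\ell_0-1$ (where $\ell_0$ is the first level with $\E S^{\ge\ell_0}<\log n$; crucially $\ell_0\le 11$ in (b)) and handles the tail by showing $S^{\ge\ell_0}\le e^{\ell_0}\log n$ via a coupling, so that all pairs from blocks with $\ge\ell_0$ vertices number at most $\binom{e^{11}\log n}{2}=O(\log^2 n)$. The pair count then reassembles as $\sum_{j=2}^{\ell_0-1}(j-1)S^{\ge j}+\binom{S^{\ge\ell_0}}{2}$, dominated by the $j=2$ term. This level-by-level decomposition is the missing idea; a single truncation of $\binom{n_B}{2}$ cannot reproduce it.
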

\begin{proof}
We prove part (a) first. Let us concentrate on any square $S \in \mathcal{F}$. Recall that the area of $S$ is $(10^5 r)^2 = 10^{10} r^2$. Hence, the number of vertices in $S$ is equal to $X \sim \textrm{Bin}(n, 10^{10} r^2/n)$ with $\E X = 10^{10} r^2 \ge 49 \cdot 10^{12} \log n$. It follows immediately from Chernoff's bound \eqref{chern1} that 
\begin{eqnarray*}
\Prob( X \ge 2 \cdot 10^{10} r^2 ) &=& \Prob( X \ge \E X + \E X) \le \exp \left( - \frac {(\E X)^2}{2 ( \E X + \E X/3 )} \right) \\
&=& \exp \left( - \frac 38 \, \E X \right) \le \exp \left( - 10^{12} \log n \right) = o(1/n).
\end{eqnarray*}
Since the number of squares in $\mathcal{F}$ is less than $n$, the desired conclusion holds by a union bound over all squares. 

\bigskip

In order to simplify the argument for part (b), we will use the de-Poissonization technique mentioned in Section~\ref{sec:Poisson}. As before, let us concentrate on any square $S \in \mathcal{F}$. Without loss of generality, we may assume that the left-bottom corner of $S$ is the point $(0,0)$ and the right-top corner of $S$ is the point $(10^5 r, 10^5 r)$. Let us also fix $k = k(n) \in \N \cup \{0\}$ satisfying $\eps = \eps(k) := 2^k \varepsilon_c \le r^{-0.1}$. 

For a given $a,b \in \{0,1\}$, let $\mathcal{E}_{a,b}$ be a family of \emph{small} squares of side length $2 \eps$, with sides parallel to the axes, and with left-bottom vertices at points $((a+2i) \eps, (b+2j)\eps)$ for some $i,j \in \N \cup \{0\}$ such that $(a+2i) \eps < 10^5 r$ and $(b+2j)\eps < 10^5 r$. Clearly, $|\mathcal{E}_{a,b}| = (1+o(1)) 10^{10} r^2/4\eps^2 = O( (r/\eps_c)^2) = O( r^{8/3} ) = O(n^{4/3})$. Moreover, any pair of vertices in $I(S)$ that are at distance at most $\eps$ from each other has to be included in some small square in some of the four families $\mathcal{E}_{a,b}$. Hence it is enough to bound from above the number of pairs of vertices contained in a small square in any of the four families $\mathcal{E}_{a,b}$.

Let us concentrate on one family $\mathcal{E}_{a,b}$ for a given $a,b \in \{0,1\}$. For any small square $s \in \mathcal{E}_{a,b}$, the number of vertices in $s$ is equal to $\mathrm{Po}(\lambda)$ with $\lambda = (2\eps)^2 = 4\eps^2$. 
For $\ell \ge 2$, let $Z_s^{\ge \ell}$ be the random variable counting the number of vertices in $s$ if this number is at least $\ell$, and $0$ otherwise. For $k \ge \ell$, we have 
$$
\Prob \left( Z_s^{\ge \ell} = k \right) = \Prob \left( \mathrm{Po}(\lambda) = k \right) = \frac {\lambda^k}{k!} \exp(-\lambda),
$$
whereas for $0 < k < \ell$ the probability is $0$ by definition.
Since $\eps \le r^{-0.1} = o(1)$ (and so also $\lambda=o(1)$), 
$$
\E Z_s^{\ge \ell} = \sum_{k \ge \ell} k \cdot \frac {\lambda^k}{k!} \exp(-\lambda) = (1+o(1)) \frac {\lambda^{\ell}}{(\ell-1)!} \exp(-\lambda) = (1+o(1)) \frac {(4\eps^2)^{\ell}}{(\ell-1)!}.
$$
For every fixed $\ell \ge 2$, since the random variables $(Z_s^{\ge \ell})_{s\in \mathcal{E}_{a,b}}$ are independent, we may apply Lemma~\ref{bentkus} with $X_s=Z_s^{\ge \ell}$ and $S= S^{\ge \ell}=\sum_{s \in \mathcal{E}_{a,b}} Z_s^{\ge \ell}$. In other words, $S^{\ge \ell}$ counts the number of vertices in small squares containing at least $\ell$ vertices. Thus, for every fixed $\ell\ge 2$ we have $\E Z_s^{\ge \ell}=O(\eps^{2\ell}) = o(1)$ and 
\begin{eqnarray}
\E S^{\ge \ell} &=& (1+o(1))|\mathcal{E}_{a,b}|(4\eps^2)^{\ell}/(\ell-1)!  = (1+o(1)) \frac {10^{10} r^2 (4\eps^2)^{\ell-1}}{(\ell-1)!}. \label{eq:expectation}
\end{eqnarray}
Since for every $s\in \mathcal E_{a,b}$ we have $\E Z_s^{\ge \ell} \le 1$, we may fix $m = 1$. Note also that $Z_s^{\ge \ell}$ attains no value between $0$ and $1$. Thus, we may simply choose (using the notation introduced right before Lemma~\ref{bentkus}) $\mathcal{L}(Z_s^{\ge \ell})=\mathcal{L}(Y^{[m]})$ and $T = \sum_{s\in \mathcal E_{a,b}} Z_s^{\ge \ell}$; in particular, $\E e^T=\E e^{S^{\ge \ell}}$. Since the random variables $(Z_s^{\ge \ell})_{s\in \mathcal{E}_{a,b}}$ are independent, we have 
\begin{eqnarray*}
\E e^{S^{\ge \ell}} &=& \left( \E e^{Z_s^{\ge \ell}} \right)^{|\mathcal E_{a,b}|}= \left( \sum_{k=0}^{\ell-1} \lambda^k e^{-\lambda}/k! + \sum_{k \ge \ell} (e \lambda)^k e^{-\lambda}/k! \right)^{|\mathcal E_{a,b}|} \le \left( 1 + (1+o(1)) \frac{(4e \eps^2)^{\ell}}{\ell !} \right)^{|\mathcal E_{a,b}|} \\
&\le& \exp \left( (1+o(1)) \frac{(4e \eps^2)^{\ell}}{\ell!} |\mathcal E_{a,b}| \right). 
\end{eqnarray*}
By Lemma~\ref{bentkus} applied with $S=S^{\ge \ell}$, we have
\begin{eqnarray*}
\Prob \left( S^{\ge \ell} \ge e^{\ell} \E S^{\ge \ell} \right) &\le&  e^{-e^{\ell} \E S^{\ge \ell}} \E e^T = \exp \left( - e^{\ell} (1+o(1))\frac{(4\eps^2)^{\ell}\ell}{\ell !}|\mathcal{E}_{a,b}| + (1+o(1)) \frac{(4e \eps^2)^{\ell}}{\ell !}|\mathcal{E}_{a,b}| \right) \nonumber \\
&=& \exp \left( - (1+o(1)) \frac {\ell-1}{\ell} \ e^{\ell} \ \E S^{\ge \ell} \right) \le \exp \Big( - (e^2/2+o(1)) \E S^{\ge \ell} \Big). \nonumber 
\end{eqnarray*}
Hence, as long as $\E S^{\ge \ell} \ge \log n$, 
\begin{equation}
    \Prob \left( S^{\ge \ell} \ge e^{\ell} \E S^{\ge \ell} \right) = o(1/n^2).\label{bentkusapplied}
\end{equation}

Recall that by~\eqref{eq:expectation} for every $\ell\ge 2$ we have $\E S^{\ge \ell} = \Theta( r^2 \eps^{2 \ell - 2} )$. In particular, $\E S^{\ge 2} = \Theta( r^2 \eps^2 ) = \Omega( r^2 \eps_c^2 ) = \Omega( r^{4/3} ) = \Omega ( \log^2 n )$. Since $\eps_c \le \eps \le r^{-0.1}$, there exists some integer $\ell_0\in [3,11]$ such that $\E S^{\ge j} \ge \log n$ for every integer $j\in [2, \ell_0-1]$, whereas $\E S^{\ge \ell_0} < \log n$. Therefore, it follows from~\eqref{bentkusapplied} that for every integer $j\in [2, \ell_0-1]$, with probability $1-o(1/n^2)$ the value of $S^{\ge j}$ is at most a constant multiplicative factor away from its expectation. 
Observe that if the number of vertices in one small square is $j$, where $2 \le j < \ell_0$, then trivially each vertex belongs to exactly $j-1$ pairs of vertices from this square. We get that with probability $1-o(1/n^2)$ the number of pairs involving such vertices is at most
\begin{equation}\label{small j}
\sum_{j = 2}^{\ell_0-1} (j-1) S^{\ge j} = (1+o(1)) S^{\ge 2} \le (4 e^2 \cdot 10^{10}+o(1)) r^2 \eps^2.   
\end{equation}

On the other hand, one may couple the variables $Z^{\ge \ell_0}_s$ with variables $\hat{Z}^{\ge \ell_0}_s$ in such a way that $Z^{\ge \ell_0}_s \preccurlyeq \hat{Z}^{\ge \ell_0}_s$, and such that $\E \hat{S}^{\ge  \ell_0} = \log n$, where $\hat{S}^{\ge \ell_0}:=\sum_{s \in \mathcal{E}_{a,b}} \hat{Z}^{\ge \ell_0}_s$. More precisely, we set up the coupling such that for all $k \ge \ell_0$ we have
$$
\Prob \left( \hat{Z}^{\ge \ell_0}_s = k \right) = \frac {\hat{\lambda}^k}{k!} e^{-\hat{\lambda}},
$$ 
where $\hat{\lambda}=4\hat{\eps}^2$ for some carefully tuned value of $\hat{\eps} \ge \eps$ such that $\E \hat{S}^{\ge \ell_0}=\log n$. (Similarly to the original random variable $Z^{\ge \ell_0}_s$, $\hat{Z}^{\ge \ell_0}_s$ attains no value smaller than $\ell_0$ other than $0$.) Clearly, $S^{\ge \ell_0} \le \hat{S}^{\ge \ell_0}$ and $\E \, e^{S^{\ge \ell_0}} \le \E e^{\hat{S}^{\ge \ell_0}}$. 
We may apply Lemma~\ref{bentkus} again, this time with $T=S=\hat{S}^{\ge  \ell_0}$. Arguing as in~\eqref{bentkusapplied}, we get that
$$
\Prob \left( S^{\ge \ell_0} \ge e^{\ell_0} \log n \right) \le
\Prob \left( \hat{S}^{\ge \ell_0} \ge e^{\ell_0} \E \hat{S}^{\ge \ell_0} \right) \le
\exp \Big( -(e^2/2+o(1)) \E \hat{S}^{\ge \ell_0} \Big) = o(1/n^2).
$$
We deduce that with probability $1-o(1/n^2)$ there are at most $\binom{e^{\ell_0} \log n}{2} \le \binom{e^{11} \log n}{2} \le 10^{10} \log^2 n$ pairs of vertices such that each pair belongs to some small square containing at least $\ell_0$ vertices. Combining this observation with~\eqref{small j} we get that with probability $1 - o(1/n^2)$, the number of pairs of vertices that are both contained in one square in the family $\mathcal{E}_{a,b}$ is at most 
$$
(4 e^2 \cdot 10^{10}+o(1)) r^2 \eps^2 + 10^{10} \log^2 n \le 5 \cdot 10^{11} r^2 \eps^2.
$$
(Note that $(4 e^2 \cdot 10^{10}+o(1)) r^2 \eps^2 \ge 4320 \cdot 10^{10} \log^2 n$ and so the second term is much smaller than the first one.)

Taking a union bound over all four families $\mathcal{E}_{a,b}$, all $O(n)$ squares $S$, and all $O(\log n)$ values of $k$, we get that the desired bound holds for the Poisson model with probability $1-o(1/\sqrt{n})$, and so it holds a.a.s.\ for $\RT$.

\bigskip

Parts~(c) and~(d) are similar to part~(b) so we only sketch the proof highlighting a few minor adjustments to the argument. In fact, part~(c) follows \emph{exactly} the same argument, since $\eps \le r^{-0.1} = o(1)$ as before. The only thing that is worth pointing out is that the new definition of $\eps_c$, namely, $\eps_c = 12\, \log n / r$ guarantees that $\E S^{\ge 2} = \Theta( r^2 \eps^2 ) = \Omega( r^2 \eps_c^2 ) = \Omega ( \log^2 n )$, as needed.

\bigskip

Part~(d) requires slightly more careful adjustments since $\eps_c$ might \emph{not} tend to zero as $n\to \infty$. As before, the number of vertices in $s \in \mathcal{E}_{a,b}$ is equal to $\mathrm{Po}(\lambda)$, but this time $\lambda = (2\eps_c)^2 = 4\eps_c^2 \le 1/10$ since $r \ge 100 \log n$. We keep the same notation: for $\ell \ge 2$, let $Z_s^{\ge \ell}$ be the random variable counting the number of vertices in $s$, if this number is at least $\ell$, and $0$ otherwise. This time we get 
$$
\E Z_s^{\ge \ell} = \sum_{k \ge \ell} k \cdot \frac {\lambda^k}{k!} \exp(-\lambda) = \frac {C_\ell \cdot \lambda^{\ell}}{(\ell-1)!} \exp(-\lambda), \text{ where } C_{\ell} := \sum_{k \ge \ell} \frac { \lambda^{k-\ell} (\ell-1)!}{(k-1)!}.
$$
Note that $C_{\ell}$ is an explicit constant between 1 and 2 as each term in the sum is at most half of the previous term. It follows that
\begin{eqnarray*}
\E S^{\ge \ell} &=& C_{\ell} \cdot  |\mathcal{E}_{a,b}| \frac {\lambda^{\ell}}{(\ell-1)!} e^{-\lambda} = (1+o(1)) \frac {10^{10} C_{\ell} \, r^2 (4\eps_c^2)^{\ell-1}}{(\ell-1)!} e^{-4 \eps_c^2} \\
&\le& (1+o(1)) \frac {8 \cdot 10^{10} (r \eps_c)^2}{(\ell-1)!} \le \dfrac{144 \cdot 10^{11} \log^2 n}{(\ell-1)!}.
\end{eqnarray*}

In particular, $\E S^{\ge 2} = \Theta(\log^2 n)$. Hence, there exists $\ell_0$ such that $\frac {j-1}{j} e^{j} \, \E S^{\ge j} \ge 3 \log n$ for every integer $j\in [2, \ell_0-1]$, whereas $\frac {\ell_0-1}{\ell_0} e^{\ell_0} \, \E S^{\ge \ell_0} < 3 \log n$. Arguing as before (including the coupling that is needed for the claim for $\ell_0$), we get that with probability $1-o(\ell_0/n^2) = 1-o(\log n / n^2)$, we have $S^{\ge j} \le e^{j} \E S^{\ge j}$ for $j < \ell_0$, and $S^{\ge \ell_0} \le e^{\ell_0} \E S^{\ge \ell_0} < \frac {\ell_0}{\ell_0-1} \cdot 3 \log n \le 6 \log n$. 
With probability $1-o(\log n/n^2)$, the number of pairs of vertices that are both contained in one of the small squares in the family $\mathcal{E}_{a,b}$ is at most 
$$
\sum_{j = 2}^{\ell_0-1} (j-1) S^{\ge j} + \binom{S^{\ge \ell_0}}{2}\le \sum_{j=2}^{\ell_0-1} \dfrac{144 \cdot 10^{11} e^j \log^2 n}{(j-2)!} + 18\log^2 n\le 2\cdot 10^{15} \log^2 n. 
$$
The claim may now be deduced after the union bound over the four families $(\mathcal E_{a,b})_{a,b\in \{0,1\}}$.
\end{proof}

Now, we are ready to prove Lemma~\ref{lem:second_phase} and finish the proof of the upper bounds. 

\begin{proof}[Proof of Lemma~\ref{lem:second_phase}]
Since we aim for a statement that holds a.a.s., we may assume that the properties stated in Lemma~\ref{lem:vertices_in_crown} and Lemma~\ref{lem:pairs_at_given_distance} hold. In other words, we do not generate a random graph from $\RT$ but instead consider a deterministic graph $G$ that satisfies the desired properties. Let $S \in \mathcal{F}$. We will use a non-constructive argument to show that there exists a set $W = X \cup Y \subseteq S$ such that all vertices in $I(S)$ have a unique $W$-signature.

\bigskip

Let us first concentrate on dense graphs and assume that $r \ge \log^{3/2} n$. We construct a random set $X$ by independently selecting vertices from $S$ to be put into $X$ with probability $\delta = r^{-2/3}$. (Note that this is the only source of randomness at this point as $G$ is a deterministic graph.) By Lemma~\ref{lem:pairs_at_given_distance}(a), the number of vertices in $S$ is at most $2\cdot 10^{10} r^2$, and so $\E |X| \le 2\cdot 10^{10} \, r^{4/3}$.

By Lemma~\ref{lem:pairs_at_given_distance}(b), there are at most $2 \cdot 10^{12} r^2 \eps_c^2 = 288 \cdot 10^{12} \, r^{4/3}$ pairs of vertices in $I(S)$ at distance at most $\eps_c$ from each other. The number of these is small enough so we do not need to worry about them; all vertices involved in such pairs may simply be put into $Y$. Fix any $k = k(n) \in \N$ such that $2^k \eps_c \le r^{-0.1} = o(1)$. Concentrate now on any pair of vertices $u, v$ from $I(S)$ that are at distance $\eps$ from each other for some $2^{k-1} \eps_c < \eps \le 2^k \eps_c$. By Lemma~\ref{lem:pairs_at_given_distance}(b), there are at most $2 \cdot 10^{12} r^2 (2^k \eps_c)^2 = 288 \cdot 10^{12} \, r^{4/3} \cdot 4^k$ such pairs. By Lemma~\ref{lem:vertices_in_crown}, there are at least $\eps r$ vertices in $\Bb(u,r)\triangle \Bb(v,r)$. Since this symmetric difference is included in $S$, each of these vertices independently ends up in $X$ with probability $\delta$. If at least one of them actually ends up in $X$, then the vertices $u,v$ are distinguished by the sensors. Hence, $u,v$ are \emph{not} distinguished with probability at most 
\begin{equation*}
\left(1 - \delta \right)^{\eps r} \le \exp \left( - \delta \eps r \right) \le \exp \left( - 2^{k-1} \delta \eps_c r \right) = \exp \left( - 6 \cdot 2^k \right).    
\end{equation*}
Similarly, by Lemma~\ref{lem:pairs_at_given_distance} (a), there are trivially at most $O((r^2)^2)= O(r^4)$ pairs of vertices in $I(S)$ at distance at least $\eps := r^{-0.1}$ from each other. By Lemma~\ref{lem:vertices_in_crown}, there are at least $\eps r = r^{0.9}$ vertices in $\Bb(u,r)\triangle \Bb(v,r)$ for any such pair of vertices $u,v$, and so they are \emph{not} distinguished with probability at most
$$
(1 - \delta)^{r^{0.9}} \le \exp ( - \delta r^{0.9} ) \le \exp ( - r^{0.2} ).
$$
Combining all of these observations together we get that the expected number of pairs of vertices with the same $X$-signature is at most 
\begin{align*}
288 \cdot 10^{12} \, r^{4/3} & + \sum_{k \ge 1} 288 \cdot 10^{12} \, r^{4/3} \cdot 4^k \cdot \exp \left( - 6 \cdot 2^k \right) + O(r^4) \cdot \exp( - r^{0.2} ) \\
&\le 288 \cdot 10^{12} \, r^{4/3} + 0.01 \cdot 288 \cdot 10^{12} \, r^{4/3} + o(1) \le 300 \cdot 10^{12} \, r^{4/3}.
\end{align*}
As promised, we put all vertices that occur in at least one such pair into the set $Y$. 

Clearly, by construction each vertex in $I(S)$ has a unique $W$-signature. Moreover, we get that $\E |W| = \E |X| + \E |Y| \le  2\cdot 10^{10} \, r^{4/3} + 6 \cdot 10^{14} \, r^{4/3} \le 10^{15} \, r^{4/3}$. Finally, the probabilistic method implies that there exists a set $W$ of size at most $10^{15} \, r^{4/3}$ and the proof for the dense graphs is finished.

\medskip

Let us now deal with sparser graphs and assume that $100 \log n \le r < \log^{3/2} n$. The proof only requires small adjustments so we only sketch it. We construct a random set $X$ by independently selecting vertices from $S$ to be put into $X$ with probability $\delta = \log^2 n / r^2$ and so, by Lemma~\ref{lem:pairs_at_given_distance}(a), $\E |X| \le 2\cdot 10^{10} \, \log^2 n$. 

Suppose first that $\log^{5/4} n \le r < \log^{3/2} n$. By Lemma~\ref{lem:pairs_at_given_distance}(c), there are at most $2 \cdot 10^{12} r^2 \eps_c^2 = 288 \cdot 10^{12} \log^2 n$ pairs of vertices in $I(S)$ at distance at most $\eps_c = 12 \log n / r$ from each other. Fix any $k = k(n) \in \N$ such that $2^k \eps_c \le r^{-0.1} = o(1)$, and concentrate on any pair of vertices $u, v$ from $I(S)$ that are at distance $\eps$ from each other for some $2^{k-1} \eps_c < \eps \le 2^k \eps_c$. This pair of vertices is \emph{not} distinguished with probability at most 
\begin{equation*}
\left(1 - \delta \right)^{\eps r} \le \exp \left( - 2^{k-1} \delta \eps_c r \right) = \exp \left( - 6 \cdot 2^k \log^3 n / r^2 \right) \le \exp \left( - 6 \cdot 2^k \right).
\end{equation*}
On the other hand, pairs of vertices that are at distance at least $\eps:=r^{-0.1}$ are \emph{not} distinguished with probability at most $(1-\delta)^{r^{0.9}} \le \exp( - \log^2 n / r^{1.1} ) \le \exp( - \log^{0.35} n)$. It follows that
\begin{eqnarray*}
\E |W| &=& \E |X| + \E |Y| \le 2\cdot 10^{10} \, \log^2 n \\
&& + 2 \left( 288 \cdot 10^{12} \log^2 n + \sum_{k \ge 1} 288 \cdot 10^{12} \log^2 n \cdot 4^k \cdot \exp \left( - 6 \cdot 2^k \right) + O(r^4) \cdot \exp(- \log^{0.35} n) \right) \\
&\le& 10^{15} \log^2 n.
\end{eqnarray*}

Suppose then that $100 \log n \le r < \log^{5/4} n$. By Lemma~\ref{lem:pairs_at_given_distance}(d), there are at most $10^{16} \log^2 n$ pairs of vertices in $I(S)$ at distance at most $\eps_c = 12 \log n / r$ from each other. The remaining pairs of vertices are \emph{not} distinguished with probability at most 
\begin{equation*}
\left(1 - \delta \right)^{\eps_c r} \le \exp \left( - \delta \eps_c r \right) = \exp \left( - 12 \log^3 n / r^2 \right) \le \exp \left( - 12 \log^{1/2} n \right).
\end{equation*}
This time
\begin{eqnarray*}
\E |W| &\le& (2\cdot 10^{10} \, \log^2 n) + 2 \left( 10^{16} \log^2 n + O(r^4) \cdot \exp(- 12 \log^{1/2} n) \right) \\
&\le& 3 \cdot 10^{16} \log^2 n.
\end{eqnarray*}

\bigskip

The upper bound for very sparse graphs is trivial. If $r < 100 \log n$, then one may simply put sensors on all vertices in $S$, that is, take $\delta = 1$. The bound follows immediately from Lemma~\ref{lem:pairs_at_given_distance}(a).
\end{proof}

\section{Lower bound}

This section is devoted to the proof of the lower bounds stated in Theorem~\ref{thm:main}. Assume first that $r = r(n) \ge \log n$; we will adjust the argument to sparser graphs at the end of this section.  Let $\Bb_R$ be the ball with radius $r/3$, centered in the center $O$ of the square $[0, \sqrt{n}]^2$. We will show that if the number of sensors is less than the lower bound given by Theorem~\ref{main: lower bound}, a.a.s.\ the robber has a strategy to remain undetected forever while staying in the ball $\Bb_R$ during the entire game.

\begin{theorem}\label{main: lower bound}
Fix $r = r(n) \ge \log n$ and let $G\in \RT$. Then, a.a.s.\ the robber may remain undetected forever in $\Bb_R$ in the presence of less than $10^{-4} \, r^{4/3} / \log^{1/3} n$ sensors at each round.
\end{theorem}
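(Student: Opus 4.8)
The plan is to use the perfect‑information reformulation of Subsection~\ref{sec:perfect_information} and exhibit a robber strategy that forever keeps her equivalence class of size at least $2$ with at least one representative inside $\Bb_R$. The key elementary observation is that $\Bb_R\cap V(G)$ induces a clique: any two of its points lie at toroidal distance at most $2(r/3)<r$. Hence, if a class $R^{i-1}_{j_{i-1}}$ meets $\Bb_R$, then $\Bb_R\cap V(G)\subseteq N[R^{i-1}_{j_{i-1}}]$, so at the start of each round the robber may regard all of $\Bb_R\cap V(G)$ as still available; and, by the triangle inequality, for every sensor $s$ the function $x\mapsto d_G(s,x)$ takes at most two consecutive values on $\Bb_R\cap V(G)$, so $s$ merely induces a bipartition $A_s\mid B_s$ of $\Bb_R\cap V(G)$ (one part possibly empty), with $A_s=\{x\in\Bb_R\cap V(G):d_G(s,x)=m_s\}$ where $m_s:=\min_{x\in\Bb_R\cap V(G)}d_G(s,x)$. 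Consequently the whole theorem reduces to the deterministic‑looking assertion, to be proved to hold a.a.s.: \emph{for every set $S$ of fewer than $10^{-4}r^{4/3}/\log^{1/3}n$ sensors, two vertices of $\Bb_R\cap V(G)$ receive the same $S$-signature}, i.e.\ no choice of $S$ distinguishes all of $\Bb_R\cap V(G)$.

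Next I would make each bipartition geometric. Combining the trivial lower bound $d_G(s,x)\ge d_T(s,x)/r$ with the upper bound of Corollary~\ref{cor:distances}, one gets that $\{x\in\Bb_R\cap V(G):d_T(s,x)\le m_s r/(1+\gamma r^{-4/3})\}\subseteq A_s$ and $\{x\in\Bb_R\cap V(G):d_T(s,x)>m_s r\}\subseteq B_s$, where $\gamma$ may be taken uniform over $x\in\Bb_R$ via~\eqref{eq:gamma}. Thus, away from the ``transition crown'' $\Dd\big(s,\,m_s r/(1+\gamma r^{-4/3}),\,m_s r\big)$ the bipartition coincides with a clean Euclidean disc cut, and the only ``uncertain'' vertices are those in this crown, which meets $\Bb_R$ in a band of width at most $m_s\gamma r^{-1/3}$.

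Fixing a length scale $\delta=\delta(n)$, I would then count pairs of vertices of $\Bb_R\cap V(G)$ at toroidal distance at most $\delta$. By Chernoff‑type estimates together with Observation~\ref{ob AB} (in the spirit of Lemma~\ref{lem:pairs_at_given_distance}) one has a.a.s.\ $|\Bb_R\cap V(G)|=\Theta(r^2)$ and $\Theta(r^2\delta^2)$ such ``close'' pairs. A sensor $s$ can separate a close pair only if both its endpoints lie within distance $\delta$ of the transition crown of $s$, i.e.\ inside a band across $\Bb_R$ of width $O(m_s\gamma r^{-1/3}+\delta)$; a union bound over sensor positions of a Chernoff bound for the number of vertices of $G$ in such a band then bounds, uniformly over $s$, the number of close pairs a single sensor separates. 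Since the sets of close pairs separated by the sensors of $S$ must together cover all $\Theta(r^2\delta^2)$ of them, this forces $|S|$ to be large; optimizing $\delta$ (roughly $\delta\asymp(\log n)^{1/3}r^{-1/3}$) against the crown width and the number of close pairs it catches yields the bound $\Omega(r^{4/3}/\log^{1/3}n)$.

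The main obstacle is making the ``few close pairs per sensor'' step uniform over \emph{all} sensor positions, and in particular over sensors far from $\Bb_R$. For a sensor with $d_T(s,O)$ large, Corollary~\ref{cor:distances} only guarantees a transition crown of width $\asymp m_s\gamma r^{-1/3}\asymp d_T(s,O)\,\gamma\, r^{-4/3}$, which can exceed the diameter of $\Bb_R$; then $\Bb_R\cap V(G)$ lies entirely inside the crown and the crude band estimate above becomes vacuous. These sensors must be handled separately — either via a sharper estimate of geodesic distances than Corollary~\ref{cor:distances} provides, or by a direct union bound exploiting that only $O(n)$ distinct bipartitions of $\Bb_R\cap V(G)$ can arise in total, combined with a careful choice of what is conditioned on (e.g.\ the point configuration outside $\Bb_R$, which essentially determines these bipartitions). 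Pinning down the exact power of $\log n$ in the final bound is likewise delicate, as it rests on a careful rather than crude count of how many close pairs a transition crown can actually catch.
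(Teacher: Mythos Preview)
Your high-level structure --- many $\eps$-close pairs in $\Bb_R$, few such pairs separated per sensor --- matches the paper, and you correctly identify the crux: controlling the bipartition $A_s\mid B_s$ uniformly over all sensor positions, including sensors arbitrarily far from $\Bb_R$. However, your route through Corollary~\ref{cor:distances} is a dead end, and the obstacle you name is exactly where your argument fails rather than a detail to patch. For a sensor at toroidal distance $\Theta(\sqrt{n})$ the transition crown you obtain has width $\Theta(\sqrt{n}\,\gamma r^{-4/3})$, which swallows all of $\Bb_R$; no sharper geodesic estimate of the same type will rescue this, because the graph distance from $s$ to points of $\Bb_R$ genuinely fluctuates by more than $1$ across the whole ball in general, and Corollary~\ref{cor:distances} is essentially tight. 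Your alternative suggestion of conditioning on the configuration outside $\Bb_R$ is closer in spirit, but you are still missing the key idea that makes it work.

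The paper does not use graph-to-Euclidean distance conversion at all for the lower bound. Instead, having exposed the graph by BFS from $s$ up to the first level $D_{k-1}(s)$ that reaches within distance $r$ of $\Bb_R$, it observes that the bipartition of $\Bb_R\cap V(G)$ is \emph{exactly} given by the geometric region $\Rr_1=\Bb_R\cap\bigcup_{u\in D_{k-1}(s)}\Bb(u,r)$ versus its complement $\Rr_2$: vertices in $\Rr_1$ are at graph distance $k$, those in $\Rr_2$ at graph distance $k+1$. The boundary $\partial\Rr_1\cap\Bb_R$ is a union of arcs of circles of radius $r$, and the decisive geometric lemma (Lemma~\ref{lem geom LB}) shows that the total length of this boundary is at most the perimeter $2\pi r/3$ of $\Bb_R$, \emph{independently of how many balls are involved or where $s$ sits}. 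From there, the $\eps$-tube around this boundary (plus small components of $\Rr_2$) has area $O(\eps r)$, a Poisson bound gives $O(\eps r)$ vertices in it with probability $1-o(n^{-2})$, and a second exposure inside $\Rr_1$ bounds the number of distinguished $\eps$-special pairs by $O(\eps^3 r)$. This geometric control of the exact boundary --- not an approximation via geodesic estimates --- is the missing ingredient in your proposal.
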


The general idea behind the proof of the lower bound is quite natural and intuitive. First, for a carefully tuned function $\eps=\eps(r)$, we will show that there are relatively many pairs of vertices in $\Bb_R$ that are at distance at most $\eps$ from each other. In fact, in order to simplify the argument we will concentrate on a particular \emph{special} sub-family of pairs of such vertices, which we will call \emph{special pairs}, that satisfy some additional useful property. On the other hand, we will show that regardless of where a single sensor is placed, it distinguishes only a few pairs of such vertices. This will immediately imply the desired lower bound for the number of sensors needed to distinguish vertices in $\Bb_R$ and so to locate the robber hiding in $\Bb_R$. 

A family of pairs of vertices from $\Bb_R$ that are at distance at most $\eps$ from each other is called \emph{$\eps$-special} if each vertex in $\Bb_R$ belongs to at most one such pair. In other words, an $\eps$-special family induces a matching. We will start by showing that there exists a large $\eps$-special family, provided that the graph is dense enough.

\begin{lemma}\label{lem:special_family}
Fix $r = r(n) \ge \log n$ and let $G\in \RT$. Fix $\eps = \eps(n) = (\log n / r)^{1/3} \le 1$. Then, a.a.s.\ there exists an $\eps$-special family of pairs of vertices of size $r^2 \eps^2 / 100$.
\end{lemma}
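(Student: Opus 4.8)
The plan is to build the matching by a straightforward space-partitioning argument. I would chop $\Bb_R$ into tiny square cells, each of diameter exactly $\eps$, and keep one pair of vertices out of every cell that happens to contain at least two of them. Since the cells are pairwise disjoint, the pairs produced this way are automatically vertex-disjoint, so the family is $\eps$-special by construction, and its size is simply the number of ``occupied'' cells, which a first-moment computation followed by a standard concentration bound shows to be of order $r^2\eps^2$.

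In more detail, I would first pass to the Poissonized model $\RTp$ via Subsection~\ref{sec:Poisson}: it suffices to prove the statement there with probability $1-o(1/\sqrt n)$. Tessellate $T_n$ into axis-parallel cells of side length $\eps/\sqrt2$, so that each cell has area $\eps^2/2$ and diameter $\eps$; note that $\eps\le 1$ by hypothesis and that, since $r<\sqrt n/4$ throughout, $\Bb_R$ is a genuine Euclidean disc of radius $r/3$ on $T_n$. Let $\mathcal Q$ be the collection of cells entirely contained in $\Bb_R$. Only $O(r/\eps)$ cells meet the bounding circle of $\Bb_R$, hence they cover area $O(r\eps)=o(r^2)$, so $|\mathcal Q|\ge (1-o(1))\,\dfrac{\pi (r/3)^2}{\eps^2/2}\ge \dfrac{r^2}{2\eps^2}$ for $n$ large.

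For a cell $q\in\mathcal Q$ let $A_q$ be the event that $q$ contains at least two vertices of $G$. In $\RTp$ the number of vertices in $q$ is $\mathrm{Po}(\lambda)$ with $\lambda=\eps^2/2\le 1/2$, and since distinct cells are disjoint the events $(A_q)_{q\in\mathcal Q}$ are mutually independent; moreover
$$\Prob(A_q)=1-e^{-\lambda}-\lambda e^{-\lambda}=e^{-\lambda}\bigl(e^{\lambda}-1-\lambda\bigr)\ge e^{-1/2}\,\frac{\lambda^2}{2}\ge\frac{\lambda^2}{4}=\frac{\eps^4}{16}.$$
Let $Z=\sum_{q\in\mathcal Q}\mathbf{1}_{A_q}$, a sum of independent Bernoulli variables with $\E Z\ge \dfrac{r^2}{2\eps^2}\cdot\dfrac{\eps^4}{16}=\dfrac{r^2\eps^2}{32}$. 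Since $r\ge\log n$ we have $r^2\eps^2=r^{4/3}(\log n)^{2/3}\ge(\log n)^2\to\infty$, and $r^2\eps^2/100<\E Z/3<\E Z/2$; hence Chernoff's bound~\eqref{chern}, applied with $t=\E Z-r^2\eps^2/100>\E Z/2$, gives
$$\Prob\Bigl(Z\le \tfrac{r^2\eps^2}{100}\Bigr)\le \exp\Bigl(-\frac{t^2}{2\,\E Z}\Bigr)\le\exp\Bigl(-\frac{\E Z}{8}\Bigr)\le\exp\Bigl(-\frac{r^2\eps^2}{256}\Bigr)=o(1/\sqrt n).$$

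Finally, on the high-probability event $\{Z\ge r^2\eps^2/100\}$ I would build the special family by choosing, from each cell $q$ with $A_q$ occurring, an arbitrary pair of two vertices of $G$ lying in $q$: both lie in $\Bb_R$ and are at distance at most $\mathrm{diam}(q)=\eps$, and pairs coming from distinct cells share no vertex, so this is an $\eps$-special family of $Z\ge r^2\eps^2/100$ pairs. Transferring back to $\RT$ as in Subsection~\ref{sec:Poisson} finishes the argument. There is no genuine obstacle here; the only point requiring care is the numerical bookkeeping — one must check that the $\Theta(r^2/\eps^2)$ cells together with the $\Theta(\eps^4)$ occupancy probability leave a comfortable constant-factor margin above the target $r^2\eps^2/100$, so that the Chernoff exponent is $\Omega(r^2\eps^2)=\Omega(\log^2 n)$, which is precisely what makes the failure probability $o(1/\sqrt n)$ and hence lets the estimate pass from $\RTp$ to $\RT$.
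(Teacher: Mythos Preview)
Your proposal is correct and follows essentially the same route as the paper: Poissonize, tessellate $\Bb_R$ into cells of side $\eps/\sqrt2$, count cells containing two vertices, apply Chernoff so that the failure probability is $o(1/\sqrt n)$, and de-Poissonize. The only cosmetic difference is that the paper keeps a pair from each cell containing \emph{exactly} two vertices, whereas you keep one pair from each cell containing \emph{at least} two; since pairs from distinct cells are automatically vertex-disjoint either way, both choices yield an $\eps$-special family, and your version is in fact slightly cleaner.
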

\begin{proof}
It will be convenient to use the de-Poissonization technique as explained in Section~\ref{sec:Poisson}. We start with tessellating the entire torus into squares of side length $\eps/\sqrt{2}$. Trivially, any two vertices that belong to the same square are at distance at most $\eps$. Since the area of each square (namely, $\eps^2 / 2 \le 1/2$) is negligible in comparison to the area of the ball $\Bb_R$ (namely, $\pi r^2 / 9 \ge \pi \log^2 n / 9$), the number of squares that are completely inside the ball is equal to $\ell = (2\pi/9+o(1)) (r^2 / \eps^2)$.

We construct a special family of pairs as follows. We independently expose the vertices in each square that is completely inside the ball, and if exactly two vertices belong to a given square, then we add this pair to the family. The probability that a given square has exactly two vertices in it is equal to 
$$
p := \frac {(\eps^2/2)^2}{2!} \exp( - \eps^2/2) \ge \frac {\eps^4}{8 e^{1/2}}.
$$
Hence, the number of special pairs in $\Bb_R$ is stochastically bounded from below by the random variable $X \sim \textrm{Bin}(\ell,p)$ with $\E X = \ell p \ge r^2 \eps^2 / 36 = \Theta( (\log n)^{2/3} r^{4/3} ) = \Omega( \log^2 n)$. It follows immediately from Chernoff's bound~\eqref{chern} that with probability $1-o(1/\sqrt{n})$ the size of our $\eps$-special family is at least $\E X/2 \ge r^2 \eps^2 / 100$. By de-Poissonization the same property holds a.a.s.\ in $\RT$ and so the proof of the lemma is finished.
\end{proof}

Suppose that a sensor is placed on a vertex $v \in V$ of a connected geometric graph. For a given non-negative integer $i$, let $D_{i}(v)$ be the set of vertices that are at graph distance $i$ from $v$ in $G$. Since vertices in $\Bb_R$ induce a complete graph, putting a sensor on $v$ divides $\Bb_R \cap V$ into the set of vertices in $\Bb_R \cap D_k(v)$ (possibly empty) at distance $k$ from $v$ and the set of vertices in $\Bb_R \cap D_{k+1}(v)$ (again, possibly empty) that are at distance $k+1$ from $v$, where $k \in \N \cup \{0\}$: indeed, if a vertex $u$ in $\Bb_R$ is at graph distance $k$ from $v$, then every other vertex in $\Bb_R$ is at graph distance at most $k+1$ from $v$. Note that, in particular, if $v \in \Bb_R$, then $D_0(v) = \{v\}$ and $D_1(v)$ contains all other vertices in $\Bb_R$, so this sensor only distinguishes itself from the remaining vertices in $\Bb_R$. The partition of $\Bb_R \cap V$ is more challenging to investigate when the sensor is placed on a vertex $v \in V \setminus \Bb_R$ so that $k \neq 0$. Let us concentrate on this situation.

Note that all vertices in $D_k(v)$ belong to 
$$
U(D_{k-1}(v)) := \bigcup_{u \in D_{k-1}(v)} \Bb(u,r).
$$ 
The argument that will be used in the proof of Theorem~\ref{main: lower bound} will show that $U(D_{k-1}(v))$ has a non-empty intersection with $\Bb_R$. On the other hand, no vertex in $D_{k+1}(v)$ belongs to $U(D_{k-1}(v))$. Hence, in order to estimate the number of $\eps$-special pairs of vertices that are distinguished by $v$ we need to concentrate on the \emph{boundary} between the sets $\Bb_R \cap U(D_{k-1}(v))$ and $\Bb_R \setminus U(D_{k-1}(v))$. Note also that every vertex in an $\eps$-special pair that is distinguished by $v$ must be at Euclidean distance at most $\eps$ from the boundary of $\Bb_R \cap U(D_{k-1}(v))$ in $\Bb_R$, a key property that will be used to estimate the number of such pairs. 

\bigskip

In order to investigate the ``shape'' of the boundary, we relax the assumption about $D_{k-1}(v)$ (vertices at distance $k-1$ from $v$) and simply assume that it is \emph{any} finite set of points $\{ O_i \}_{i\in \mathcal I}$ with the property that $U(\{ O_i \}_{i\in \mathcal I})$ has nonempty intersection with $\Bb_R$. Note that there could be many disconnected boundary regions of $\Bb_R\setminus U(\{ O_i \}_{i\in \mathcal I})$, and it will be convenient to distinguish between small and large regions. For the latter family of regions, we could use Weyl's famous tube formula (see Chapter 17.2 in~\cite{Mor}), but we do not do that for the following two reasons. First of all, we decided to provide an independent proof to keep the presentation self-contained. Moreover, the application of Weyl's formula would require introducing certain curvature concepts from differential geometry, which would make the argument comparable in length to our elementary proof but slightly more technical. 

\bigskip

Let us now start with a few preparatory geometric lemmas.

\begin{lemma}\label{lem length}
Let $k_1$ and $k_2$ be two circles with centers $O_1$ and $O_2$ and radii $r_1\le r_2$, respectively. Suppose that $k_1\cap k_2 = \{A, B\}$ and let $\ell_1, \ell_2$ be two lines, parallel to $O_1O_2$, that divide the plane in three parts such that $A, O_1$ and $B$ are all in different parts. Let $\ell_1\cap k_1 = \{P_1, S_1\}$, $\ell_2\cap k_1 = \{Q_1, R_1\}$, $\ell_1\cap k_2 = \{P_2, S_2\}$ and $\ell_2\cap k_2 = \{Q_2, R_2\}$ such that $P_1, Q_1$ are on the same side of the line $AB$ and also $P_2, Q_2$ are on the same side of $AB$. Then, $|P_1Q_1|\ge |P_2Q_2|$.
\end{lemma}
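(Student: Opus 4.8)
The plan is to reduce the statement to a one-variable monotonicity fact by a convenient choice of coordinates. Since each of $O_1,O_2$ is equidistant (by its own radius) from $A$ and from $B$, the line $O_1O_2$ is the perpendicular bisector of the segment $AB$; in particular $O_1\neq O_2$ and $AB\perp O_1O_2$. I would pick coordinates so that $O_1O_2$ is the $x$-axis and the midpoint of $AB$ is the origin, whence $A=(0,a)$, $B=(0,-a)$ for some $a>0$, and $O_j=(u_j,0)$ with $r_j^2=u_j^2+a^2$ for $j\in\{1,2\}$. Note that $r_1\le r_2$ is equivalent to $u_1^2\le u_2^2$, and that $a\le r_1$. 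The lines $\ell_1,\ell_2$ are horizontal, say $\ell_1=\{y=h_1\}$ and $\ell_2=\{y=h_2\}$. The hypothesis that $A$, $O_1$ and $B$ lie in the three distinct regions determined by $\ell_1,\ell_2$ (the two half-planes and the strip between them) forces, after possibly interchanging the names of the two lines, $-a<h_2<0<h_1<a$; in particular $h_1^2,h_2^2<a^2\le r_1^2\le r_2^2$, so every circle-line intersection occurring below is genuine.

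Next I would record the four points explicitly. For $i,j\in\{1,2\}$ the set $\ell_i\cap k_j$ consists of the two points $\big(u_j\pm\sqrt{r_j^2-h_i^2},\,h_i\big)$, and since $\sqrt{r_j^2-h_i^2}\ge\sqrt{r_j^2-a^2}=|u_j|$, one of them has nonnegative and the other nonpositive first coordinate. Hence ``lying on the same side of $AB$'' means that $P_1,Q_1$ use a common sign $\sigma\in\{+1,-1\}$ in the $\pm$, and $P_2,Q_2$ a common sign $\tau\in\{+1,-1\}$, so that, using $\sigma^2=\tau^2=1$,
\[
|P_1Q_1|^2=\big(\sqrt{r_1^2-h_1^2}-\sqrt{r_1^2-h_2^2}\big)^2+(h_1-h_2)^2,
\]
and likewise $|P_2Q_2|^2=\big(\sqrt{r_2^2-h_1^2}-\sqrt{r_2^2-h_2^2}\big)^2+(h_1-h_2)^2$. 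The signs $\sigma,\tau$ disappear, so it is irrelevant on which side of $AB$ the two pairs actually lie, and also irrelevant which of $\ell_1,\ell_2$ was called which.

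Finally I would rationalise the difference of radicals, using $\sqrt{R-h_1^2}-\sqrt{R-h_2^2}=(h_2^2-h_1^2)\cdot\big(\sqrt{R-h_1^2}+\sqrt{R-h_2^2}\big)^{-1}$, which gives
\[
|P_1Q_1|^2-(h_1-h_2)^2=\frac{(h_1^2-h_2^2)^2}{\big(\sqrt{r_1^2-h_1^2}+\sqrt{r_1^2-h_2^2}\big)^2}
\]
and
\[
|P_2Q_2|^2-(h_1-h_2)^2=\frac{(h_1^2-h_2^2)^2}{\big(\sqrt{r_2^2-h_1^2}+\sqrt{r_2^2-h_2^2}\big)^2}.
\]
Since $r_1\le r_2$ we have $\sqrt{r_1^2-h_i^2}\le\sqrt{r_2^2-h_i^2}$ for $i\in\{1,2\}$, so the denominator in the second display is at least the one in the first, the second fraction is therefore at most the first, and hence $|P_1Q_1|\ge|P_2Q_2|$, as claimed. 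The only genuinely delicate part of the argument is the bookkeeping in the first two paragraphs — checking that the coordinate normalisation is legitimate, that the four intersection points have exactly the claimed form, and that the $\pm$-sign ambiguity cancels — while the inequality itself is an immediate monotonicity observation once the difference of square roots has been rationalised.
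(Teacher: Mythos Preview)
Your proof is correct. It is, however, a genuinely different argument from the paper's. The paper proceeds trigonometrically: it expresses each chord length as $|P_jQ_j|=(h_1+h_2)/\cos\!\big(\tfrac12(\angle O_jP_jS_j-\angle O_jQ_jR_j)\big)$ via the inscribed-angle relations, and then reduces the comparison to the monotonicity of $f(r)=\arcsin(h_1/r)-\arcsin(h_2/r)$ in $r$, which it verifies by calculus. To make this work it first reflects $k_2$ across $AB$ (so that $O_1,O_2$ lie on opposite sides of $AB$) and normalises $h_1\ge h_2$.

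Your coordinate approach sidesteps all of this: once you write the intersection points explicitly, the sign choices $\sigma,\tau$ vanish upon squaring, no reflection or case analysis is needed, and the comparison collapses to the trivial observation that $\sqrt{r^2-h_1^2}+\sqrt{r^2-h_2^2}$ is increasing in $r$. This is more elementary (no trigonometry, no derivative computation) and also slightly more robust, since it makes transparent that the relative positions of $O_1,O_2$ with respect to $AB$, and of $P_1,Q_1$ versus $P_2,Q_2$, are immaterial. The paper's approach has the minor advantage of being coordinate-free in spirit, but your route is shorter and cleaner.
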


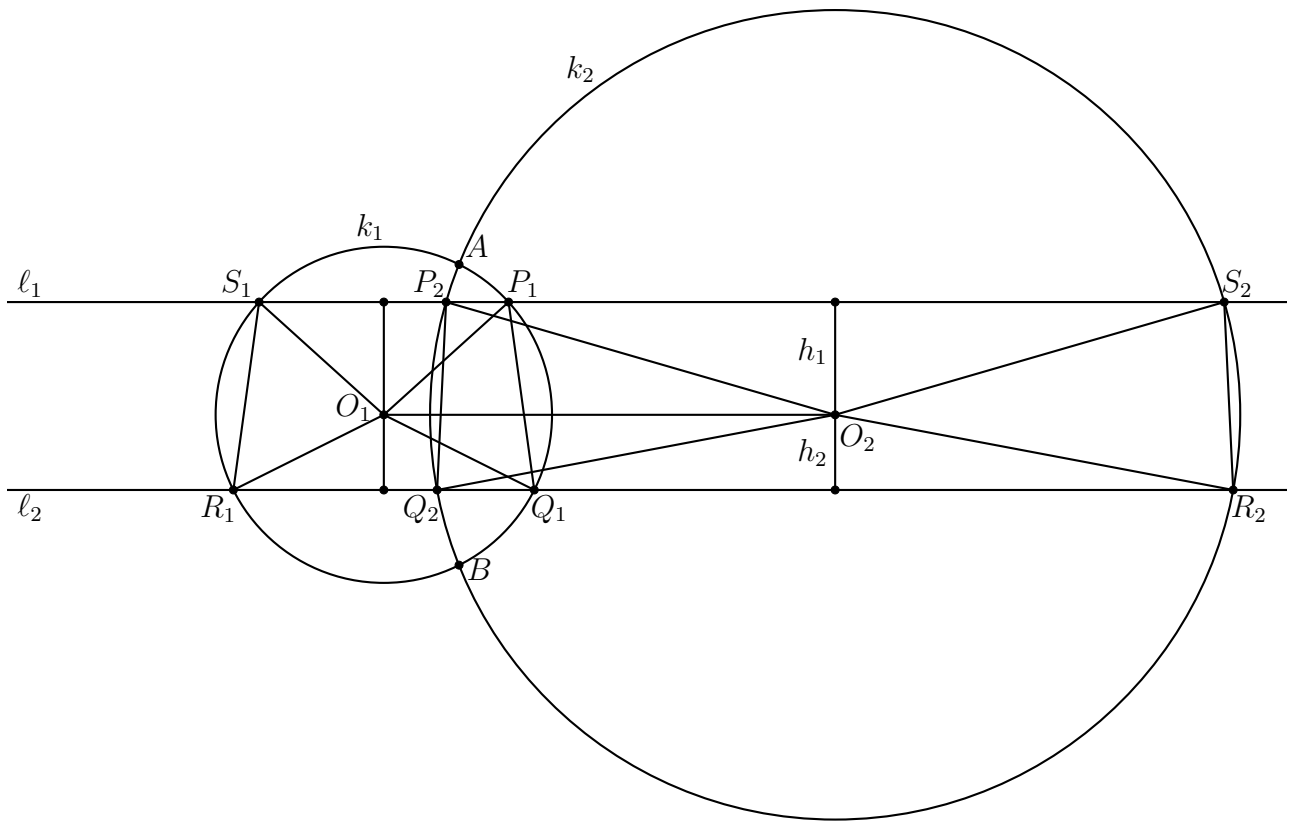
\begin{figure}
\centering
\begin{tikzpicture}[line cap=round,line join=round,x=1cm,y=1cm]
\clip(-7,-5.5) rectangle (10,5.5);
\draw [line width=0.8pt] (-2,0) circle (2.23606797749979cm);
\draw [line width=0.8pt] (4,0) circle (5.385164807134505cm);
\draw [line width=0.8pt,domain=-9.693690453732609:14.784948492431132] plot(\x,{(-4-0*\x)/4});
\draw [line width=0.8pt,domain=-9.693690453732609:14.784948492431132] plot(\x,{(--6-0*\x)/4});
\draw [line width=0.8pt] (4,1.5)-- (4,-1);
\draw [line width=0.8pt] (-0.3416876048222998,1.5)-- (0,-1);
\draw [line width=0.8pt] (-1.172040216394301,1.5)-- (-1.2915026221291819,-1);
\draw [line width=0.8pt] (9.1720402163943,1.5)-- (9.291502622129181,-1);
\draw [line width=0.8pt] (-3.6583123951777,1.5)-- (-4,-1);
\draw [line width=0.8pt] (-3.6583123951777,1.5)-- (-2,0);
\draw [line width=0.8pt] (-2,0)-- (-4,-1);
\draw [line width=0.8pt] (4,0)-- (9.1720402163943,1.5);
\draw [line width=0.8pt] (4,0)-- (9.291502622129181,-1);
\draw [line width=0.8pt] (-2,1.5)-- (-2,-1);
\draw [line width=0.8pt] (-2,0)-- (-0.3416876048222998,1.5);
\draw [line width=0.8pt] (-1.172040216394301,1.5)-- (4,0);
\draw [line width=0.8pt] (-2,0)-- (4,0);
\draw [line width=0.8pt] (-1.2915026221291819,-1)-- (4,0);
\draw [line width=0.8pt] (-2,0)-- (0,-1);
\begin{scriptsize}
\draw [fill=black] (-2,0) circle (1.5pt);
\draw[color=black] (-2.4,0.1) node {\large{$O_1$}};
\draw [fill=black] (-1,2) circle (1.5pt);
\draw[color=black] (-0.7628120257181813,2.247940051813796) node {\large{$A$}};
\draw[color=black] (-2.1779833397932724,2.5) node {\large{$k_1$}};
\draw [fill=black] (4,0) circle (1.5pt);
\draw[color=black] (4.3,-0.3) node {\large{$O_2$}};

\draw[color=black] (3.7,-0.5) node {\large{$h_2$}};
\draw[color=black] (3.7,0.85) node {\large{$h_1$}};

\draw[color=black] (0.6141114150035292,4.6) node {\large{$k_2$}};
\draw[color=black] (-6.7,-1.23) node {\large{$\ell_2$}};
\draw[color=black] (-6.7,1.75) node {\large{$\ell_1$}};
\draw [fill=black] (-1,-2) circle (1.5pt);
\draw[color=black] (-0.73,-2.05) node {\large{$B$}};
\draw [fill=black] (-1.172040216394301,1.5) circle (1.5pt);
\draw[color=black] (-1.4,1.75) node {\large{$P_2$}};
\draw [fill=black] (-1.2915026221291819,-1) circle (1.5pt);
\draw[color=black] (-1.5,-1.25) node {\large{$Q_2$}};
\draw [fill=black] (9.291502622129181,-1) circle (1.5pt);
\draw[color=black] (9.5,-1.25) node {\large{$R_2$}};
\draw [fill=black] (4,1.5) circle (1.5pt);
\draw [fill=black] (4,-1) circle (1.5pt);
\draw [fill=black] (9.1720402163943,1.5) circle (1.5pt);
\draw[color=black] (9.34,1.75) node {\large{$S_2$}};
\draw [fill=black] (-0.3416876048222998,1.5) circle (1.5pt);
\draw[color=black] (-0.15,1.75) node {\large{$P_1$}};
\draw [fill=black] (-3.6583123951777,1.5) circle (1.5pt);
\draw[color=black] (-3.95,1.75) node {\large{$S_1$}};
\draw [fill=black] (0,-1) circle (1.5pt);
\draw[color=black] (0.2,-1.25) node {\large{$Q_1$}};
\draw [fill=black] (-4,-1) circle (1.5pt);
\draw[color=black] (-4.2,-1.25) node {\large{$R_1$}};
\draw [fill=black] (-2,1.5) circle (1.5pt);
\draw [fill=black] (-2,-1) circle (1.5pt);
\end{scriptsize}
\end{tikzpicture}
\caption{Illustration of the proof of Lemma~\ref{lem length}.}
\label{fig 5}
\end{figure}

\begin{proof}
Without loss of generality, we may assume that either $O_1$ and $O_2$ are on different sides of the line $AB$, or $O_1\in AB$ (otherwise, apply symmetry to $k_2$ with respect to $AB$ - this will not change the lengths of both $P_1Q_1$ and $P_2Q_2$), and we may also assume that $d(O_1,\ell_1) = h_1$ and $d(O_1,\ell_2) = h_2$ with $h_1\ge h_2$, see Figure~\ref{fig 5}. We have that 
\begin{align*}
    |P_1Q_1| &=\hspace{0.3em} \dfrac{h_1+h_2}{\sin(\angle P_1Q_1R_1)} =\hspace{0.3em} \dfrac{h_1+h_2}{\sin(\angle P_1O_1R_1/2)} \\ 
    &=\hspace{0.3em} \dfrac{h_1+h_2}{\sin(\pi/2 - \angle O_1P_1S_1/2+\angle O_1Q_1R_1/2)}\\ 
    &=\hspace{0.3em} \dfrac{h_1+h_2}{\cos(\angle O_1P_1S_1/2-\angle O_1Q_1R_1/2)},
\end{align*}
and also 
\begin{align*}
    |P_2Q_2| &=\hspace{0.3em} \dfrac{h_1+h_2}{\sin(\angle P_2Q_2R_2)} =\hspace{0.3em} \dfrac{h_1+h_2}{\sin(\angle P_2O_2R_2/2)}\\ 
    &=\hspace{0.3em} \dfrac{h_1+h_2}{\sin(\pi/2 - \angle O_2P_2S_2/2+\angle O_2Q_2R_2/2)}\\ 
    &=\hspace{0.3em} \dfrac{h_1+h_2}{\cos(\angle O_2P_2S_2/2-\angle O_2Q_2R_2/2)}.
\end{align*}
Note that since $h_1\ge h_2$ we have that
\begin{equation*}
    \angle O_1P_1S_1 = \arcsin(h_1/r_1) \ge \arcsin(h_2/r_1) = \angle O_1Q_1R_1
\end{equation*}
and 
\begin{equation*}
    \angle O_2P_2S_2 = \arcsin(h_1/r_2) \ge \arcsin(h_2/r_2) = \angle O_2Q_2R_2.
\end{equation*}
Moreover, standard analysis shows that the function $f:r\in [h_1,+\infty)\mapsto \arcsin(h_1/r) - \arcsin(h_2/r)$ is decreasing and therefore
\begin{equation*}
    \angle O_2P_2S_2-\angle O_2Q_2R_2 = f(r_2) \le f(r_1) = \angle O_1P_1S_1-\angle O_1Q_1R_1.
\end{equation*}
We conclude that
\begin{equation*}
    |P_2Q_2| = \dfrac{h_1+h_2}{\cos(f(r_2)/2)}\le \dfrac{h_1+h_2}{\cos(f(r_1)/2)} = |P_1Q_1|,
\end{equation*}
and the proof of the lemma is completed.
\end{proof}

\begin{corollary}\label{cor length}
In the setting of Lemma~\ref{lem length}, the length of the arc $\overset{\frown}{Q_1P_1}$ in $k_1$ is larger than or equal to the length of the arc $\overset{\frown}{P_2Q_2}$ in $k_2$.
\end{corollary}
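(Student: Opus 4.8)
The plan is to reduce the statement to the chord inequality $|P_1Q_1|\ge|P_2Q_2|$ already furnished by Lemma~\ref{lem length}, combined with two elementary monotonicity facts. First I would check that both arcs appearing in the statement are \emph{minor} arcs. Indeed, in the configuration of Lemma~\ref{lem length} (see Figure~\ref{fig 5}), by the alternate‑angle argument applied to the transversal $O_1P_1$ of the parallel lines $\ell_1$ and $O_1O_2$, together with the identity $\angle O_1P_1S_1=\arcsin(h_1/r_1)$ computed in the proof of Lemma~\ref{lem length}, the ray $O_1P_1$ makes angle $\arcsin(h_1/r_1)$ with $O_1O_2$, and likewise $O_1Q_1$ makes angle $\arcsin(h_2/r_1)$ with $O_1O_2$ on the opposite side; hence the central angle of $\overset{\frown}{Q_1P_1}$ in $k_1$ is $\arcsin(h_1/r_1)+\arcsin(h_2/r_1)\le\pi$, and the analogous bound holds for $\overset{\frown}{P_2Q_2}$ in $k_2$. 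Consequently the two arc lengths equal
\[
2r_1\arcsin\!\left(\frac{|P_1Q_1|}{2r_1}\right) \quad\text{and}\quad 2r_2\arcsin\!\left(\frac{|P_2Q_2|}{2r_2}\right),
\]
respectively. (One could also bypass Lemma~\ref{lem length} entirely and compare these two expressions directly, since $r\mapsto r\arcsin(h/r)$ is non‑increasing on $(h,+\infty)$ — its derivative $\arcsin(h/r)-h/\sqrt{r^2-h^2}$ is non‑positive by $\theta\le\tan\theta$ — applied with $h=h_1$ and $h=h_2$.)

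The proof then proceeds in two steps. By Lemma~\ref{lem length}, $|P_1Q_1|\ge|P_2Q_2|$, so monotonicity of $\arcsin$ gives
$2r_1\arcsin(|P_1Q_1|/(2r_1))\ge 2r_1\arcsin(|P_2Q_2|/(2r_1))$. It remains to show that
$2r_1\arcsin(|P_2Q_2|/(2r_1))\ge 2r_2\arcsin(|P_2Q_2|/(2r_2))$, i.e.\ that $F(r):=2r\arcsin(|P_2Q_2|/(2r))$ is non‑increasing on $[\,|P_2Q_2|/2,+\infty)$. Writing $t=|P_2Q_2|/(2r)\in(0,1]$ and $\theta=\arcsin t$, a direct differentiation yields $F'(r)=2(\theta-\tan\theta)\le 0$ because $\tan\theta\ge\theta$ on $[0,\pi/2)$. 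Since $r_1\le r_2$, this gives $F(r_1)\ge F(r_2)$, and chaining the two displayed inequalities proves the corollary.

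I do not expect a genuine obstacle here: the only point that needs a little care is confirming that the arcs named in the statement are the minor arcs rather than the major ones, which is exactly what the bound $\arcsin(h_1/r)+\arcsin(h_2/r)\le\pi$ certifies, in conjunction with the placement of $P_1,Q_1$ (resp.\ $P_2,Q_2$) on the same side of $AB$ prescribed in Lemma~\ref{lem length}. Everything else is a one‑line calculus estimate.
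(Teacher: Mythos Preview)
Your argument is correct and follows the same route as the paper's: invoke the chord inequality $|P_1Q_1|\ge|P_2Q_2|$ from Lemma~\ref{lem length} and then use that $r_1\le r_2$ (the paper phrases this as ``the curvature of $k_1$ is larger than or equal to the curvature of $k_2$''). You simply make explicit what the paper leaves implicit, namely the minor-arc check and the monotonicity of $r\mapsto 2r\arcsin(c/(2r))$, so this is the same proof fleshed out.
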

\begin{proof}
This follows immediately from the fact that by Lemma~\ref{lem length} $|Q_1P_1|\ge |P_2Q_2|$, and that the curvature of the cycle $k_1$ is larger than or equal to the curvature of the cycle $k_2$ (recall that $r_1\le r_2$). 
\end{proof}

The above corollary can be used to bound the length of the boundary. The next lemma is the key observation that we will use for this purpose. 

\begin{lemma}\label{lem geom LB}
Fix a finite set of points $\{ O_i \}_{i\in \mathcal I}$ and a ball $\Bb' = \Bb(O', r')$, where $r'\le r/3$. Assume that the points $O'\cup \{ O_i \}_{i\in \mathcal I}$ are in general position. Then, the length of the boundary between the set
\begin{equation*}
    \Rr_1 := \Bb'\cap \left(\bigcup_{i\in\mathcal I} \Bb(O_i, r)\right)
\end{equation*}
and its complement $\Rr_2 := \Bb' \setminus \Rr_1$ is at most the length of the perimeter of $\Bb'$, that is, is at most $2\pi r'$.
\end{lemma}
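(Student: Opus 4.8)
The plan is to compare the length of the boundary $\partial(\Rr_1,\Rr_2)$ piece by piece with arcs of the circle $\Cc(O',r')$, and to show that the total boundary length never exceeds the perimeter $2\pi r'$ of $\Bb'$. The boundary consists of two types of pieces: (i) arcs lying on the circle $\Cc(O',r')$ itself (the part of the boundary of $\Bb'$ that bounds $\Rr_1$), and (ii) arcs lying on one of the circles $\Cc(O_i,r)$ (where the disc $\Bb(O_i,r)$ enters $\Bb'$ and separates $\Rr_1$ from $\Rr_2$). Since the points are in general position, each such circular arc of type (ii) has its two endpoints either on $\Cc(O',r')$ or at a transversal crossing with some $\Cc(O_j,r)$, and finitely many such arcs make up the whole boundary. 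The key point is that the radius $r$ of every disc $\Bb(O_i,r)$ is at least $3r'$, hence much larger than the radius $r'$ of $\Bb'$, so each boundary arc on a circle $\Cc(O_i,r)$ is much ``flatter'' than the corresponding portion of $\Cc(O',r')$ that it cuts off.

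First I would set up a \emph{radial projection} (or angular) argument: fix the center $O'$ and parametrise both $\Cc(O',r')$ and any boundary arc by the angle $\theta$ as seen from $O'$. The boundary $\partial \Rr_1$, traversed as the topological frontier of the region $\Rr_1\subseteq\Bb'$, is a union of closed curves; for each such closed curve I want to show its length is at most the length of the arc of $\Cc(O',r')$ that it, together with possibly a sub-arc of $\Cc(O',r')$, encloses. The crucial geometric input is Corollary~\ref{cor length}: if an arc of $\Cc(O_i,r)$ subtends a certain chord (equivalently, a certain angular range), then because $r\ge 3r'\ge r'$ this arc is shorter than the arc of $\Cc(O',r')$ subtending the same chord. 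More precisely, one replaces each type-(ii) boundary arc by the chord joining its endpoints, or by the corresponding arc of $\Cc(O',r')$ between the radial projections of its endpoints, and Lemma~\ref{lem length}/Corollary~\ref{cor length} guarantees the length does not increase (and strictly decreases unless the arc is a point). Summing over all boundary pieces, the total boundary length is at most the total length of $\Cc(O',r')$, i.e.\ at most $2\pi r'$. A clean way to organise this: show that the radial projection from $O'$ onto $\Cc(O',r')$, restricted to the boundary $\partial(\Rr_1,\Rr_2)$, is ``length non-increasing'' — each boundary arc lies on a circle of radius $\ge r'$ centered at a point, and one checks that projecting such an arc radially toward $O'$ onto $\Cc(O',r')$ does not increase its length, because the arc is flatter than the target circle and lies outside (or the geometry of Corollary~\ref{cor length} applies directly to the two relevant circles $\Cc(O_i,r)$ and $\Cc(O',r')$).

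The main obstacle I anticipate is handling the \emph{combinatorial structure} of the boundary: with many overlapping discs $\Bb(O_i,r)$, the region $\Rr_1$ can be non-simply-connected, the boundary can have many components, and a naive ``project each arc'' argument risks double-counting parts of $\Cc(O',r')$ or mishandling arcs whose radial projection wraps around or overlaps with that of another arc. I would deal with this by observing that the radial projection map $\pi:\partial(\Rr_1,\Rr_2)\to\Cc(O',r')$ is such that every point of $\Cc(O',r')$ has at most one preimage that is ``outermost'' — indeed, along each ray from $O'$, the set $\Rr_1$ restricted to that ray is a union of intervals, and only the topmost boundary point on that ray (the one on $\Cc(O',r')$ or the last crossing before leaving $\Bb'$) contributes to covering that direction — and then argue that the projection is injective off a measure-zero set, so lengths add up correctly and the total is bounded by $|\Cc(O',r')|=2\pi r'$. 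Alternatively, and perhaps more cleanly, one can invoke the fact that $\Rr_1$ is a subset of the convex set $\Bb'$, so its boundary's ``outer part'' projects injectively; I would present whichever of these two bookkeeping arguments turns out to be the shortest, with the geometric heart being Corollary~\ref{cor length} exactly as stated.
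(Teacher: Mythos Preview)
There is a genuine gap: the radial projection from $O'$ onto $\Cc(O',r')$ is not the map that Corollary~\ref{cor length} controls, and it is not length non-decreasing. Lemma~\ref{lem length} and Corollary~\ref{cor length} compare arcs cut off by two lines \emph{parallel to the line of centres} $O_1O_2$; in the application, $O_1=O'$ and $O_2=O_i$, so for a boundary arc lying on $\Cc(O_i,r)$ the relevant ``projection'' carries its endpoints along the fixed direction $\vv{O'O_i}$ to the far side of $\Cc(O',r')$ --- a different direction for each $i$. Radial projection from $O'$ is a different map and can collapse boundary arcs: take $O'=(0,0)$, $r'=1$, and a single centre $O_1=(r,0)$; then the boundary arc $\Cc(O_1,r)\cap\Bb'$ is, to first order in $1/r$, the vertical diameter of $\Bb'$, and every point on it has polar angle within $O(1/r)$ of $\pm\pi/2$, so its radial image on $\Cc(O',1)$ has length $O(1/r)$ while the arc itself has length close to $2$. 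Hence Corollary~\ref{cor length} does not apply to your radial map, and the inequality you write (``projecting \ldots\ does not increase its length'') is in any case pointing in the wrong direction for the argument you need.

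If you instead use the parallel projection along $\vv{O'O_i}$ --- which is what the paper does --- the length comparison is exactly Corollary~\ref{cor length}, but now the real work is to show that the images on $\Cc(O',r')$ of the various boundary arcs are pairwise disjoint. This is no longer an ``outermost preimage on each ray'' question, because arcs on different circles $\Cc(O_i,r)$ are projected in different directions. The paper handles disjointness for each pair of adjacent arcs (on $\partial\Bb_1$ and $\partial\Bb_2$, meeting at a point $X\in\Bb'$) by continuously rotating $\Bb_2$ about $X$ until it coincides with $\Bb_1$ and tracking the images: during the rotation the image arc of the $\Bb_2$-piece slides along $\Cc(O',r')$, and at the end the two images are exactly complementary on $\Cc(O',r')$, whence they were disjoint to begin with. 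This rotation argument is the substantive geometric step your outline is missing; neither of your proposed bookkeeping devices (outermost preimage, or convexity of $\Bb'$) substitutes for it.
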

\begin{proof}
For convenience, let $\partial \Rr$ denote the boundary of region $\Rr$ and let $\Bb_i = \Bb(O_i, r)$ for every $i\in \mathcal I$. Let us consider the following transformation of $\partial \Rr_1$. For any $i\in \mathcal I$ and any arc in $\partial \Rr_1 \cap \partial \Bb_i$, carry this arc along two rays starting at its endpoints, parallel to the ray $\vv{O'O_i}$, and in the same direction as this ray, to an arc of $\partial \Bb'$. For example, there are three such arcs in the top part of Figure~\ref{fig 4}, drawn in red; for example, the arc $\overset{\frown}{A_1X} \subseteq \partial \Bb_1$ is projected alongside $\vv{O'O_1}$ to the arc $\overset{\frown}{X_1A_1} \subseteq \partial \Bb'$ (note that we use the convention that arcs are always taken in anticlockwise direction).

First, let us note that by Corollary~\ref{cor length} the image of every arc in $\partial \Rr_1$ is at least as long as the arc itself. Hence, it remains to prove that the images of different arcs are disjoint. If $|\mathcal I| = 1$, then the statement trivially holds and so we may assume that $|\mathcal I| \ge 2$. Note also that it is sufficient to prove this fact for any pair of arcs in $\partial \Rr_1$ that belong to two different balls from the family $\{ \Bb_i\}_{i\in \mathcal I}$, say $\Bb_1$ and $\Bb_2$. Recall that $\Bb_1$ has center $O_1$ and suppose that it intersects $\Bb'$ in $A_1$ and $B_1$. Similarly, recall that $\Bb_2$ has center $O_2$ and suppose that it intersects $\Bb'$ in $A_2$ and $B_2$. See the bottom part of Figure~\ref{fig 4} for illustration.

Suppose that $\overset{\frown}{B_1A_1} \subseteq \partial \Bb'$ and $\overset{\frown}{B_2A_2} \subseteq \partial \Bb'$ have a non-empty intersection in $\Bb'$; otherwise, the statement clearly holds. Then, the arcs $\overset{\frown}{A_1B_1} \subseteq \partial \Bb_1$ and $\overset{\frown}{A_2B_2} \subseteq \partial \Bb_2$ intersect in a unique point $X\in \Bb_R$. Let  $X_1\in \partial \Bb'$ be the point such that $XX_1||O'O_1$ with $\vv{XX_1}$ having the same direction as $\vv{O'O_1}$. Similarly, let $X_2 \in \partial \Bb'$ be the point such that and $\vv{XX_2}||\vv{O'O_2}$ with $\vv{XX_2}$ having the same direction as $\vv{O'O_2}$ (see, again, the bottom part of Figure~\ref{fig 4}). Then, $\partial \Rr_1 \cap \partial \Bb_1$ is either contained in the arc $\overset{\frown}{A_1X}$ or in the arc $\overset{\frown}{XB_1}$ of $\partial \Bb_1$. We may assume that $\partial \Rr_1 \cap \partial \Bb_1$ is contained in the arc $\overset{\frown}{A_1X}$ of $\partial \Bb_1$, as the other case can be dealt with analogously. Then, $\partial \Rr_1 \cap \partial\Bb_2$ is contained in the arc $\overset{\frown}{XB_2}$ of $\partial \Bb_2$. In the rest of the proof, all arcs belong to $\partial \Bb'$. Our goal is to prove that the arcs $\overset{\frown}{X_1A_1}$ and $\overset{\frown}{B_2X_2}$ are disjoint. To show this we perform a continuous rotation of $\Bb_2$ around the point $X$ in the direction which decreases the length of the (directed) arc $\overset{\frown}{B_2B_1}$, until $\Bb_2$ coincides with $\Bb_1$. This operation decreases the length of the arc $\overset{\frown}{X_2X_1}$ as well. More importantly, at the end of this rotation when $\Bb_2$ coincides with $\Bb_1$, the arc $\overset{\frown}{B_1X_1}$ becomes the image of the arc $\overset{\frown}{B_2X_2}$. This proves that the arcs $\overset{\frown}{X_1A_1}$ and $\overset{\frown}{B_2X_2}$ of $\Bb'$ were initially disjoint. This finishes the proof of the lemma since $|\partial \Bb'|=2\pi r'$.
\end{proof}

\begin{figure}
\centering
\begin{minipage}{1.\textwidth}
\resizebox{30em}{!}{%
\hspace*{25em}
\definecolor{light gray}{rgb}{0.8,0.8,0.8}
\begin{tikzpicture}[line cap=round,line join=round,x=1cm,y=1cm]
\clip(-20,-7.2) rectangle (-4,2.5);
\draw [line width=0.8pt] (4-10,2) circle (6cm);
\draw [line width=0.8pt] (-1-10,-7) circle (6cm);
\draw [line width=0.8pt] (-4-10,-7) circle (6cm);
\draw [line width=0.8pt,dotted] (-2-10,0)-- (4-10,2);
\draw [line width=0.8pt,dotted] (-2-10,0)-- (-1-10,-7);
\draw [line width=0.8pt,dotted] (-2-10,0)-- (-4-10,-7);
\draw [line width=0.4pt,dash pattern=on 1pt off 1pt] (-1.1943336768604127-10,-1.0031479572997708)-- (-0.10519982175151557-10,-0.6401033389301384);
\draw [line width=0.4pt,dash pattern=on 1pt off 1pt] (-1.1943336768604127-10,-1.0031479572997708)-- (-1.0836778997437515-10,-1.7777383971163974);
\draw [line width=0.4pt,dash pattern=on 1pt off 1pt] (-2.5-10,-1.1905249806888747)-- (-2.389840983981042-10,-1.9616380928215822);
\draw [line width=0.4pt,dash pattern=on 1pt off 1pt] (-2.5-10,-1.1905249806888747)-- (-2.695603233892393-10,-1.8751362993122516);

\draw [shift={(-2-10,0)},line width=0.8pt, dash pattern=on 6pt off 6pt]  plot[domain=-0.32578402496009407:1.5707963267948966,variable=\t]({1*2*cos(\t r)+0*2*sin(\t r)},{0*2*cos(\t r)+1*2*sin(\t r)});

\draw [shift={(-2-10,0)},line width=0.8pt]  plot[domain=1.5707963267948966:3.668743702840922,variable=\t]({1*2*cos(\t r)+0*2*sin(\t r)},{0*2*cos(\t r)+1*2*sin(\t r)});

\draw [shift={(-2-10,0)},line width=0.8pt, dash pattern=on 6pt off 6pt]  plot[domain=4.516212593746767:5.188314208713056,variable=\t]({1*2*cos(\t r)+0*2*sin(\t r)},{0*2*cos(\t r)+1*2*sin(\t r)});

\draw [shift={(-2-10,0)},line width=0.8pt]  plot[domain=4.357163671019901:4.516212593746767,variable=\t]({1*2*cos(\t r)+0*2*sin(\t r)},{0*2*cos(\t r)+1*2*sin(\t r)});

\draw [shift={(-2-10,0)},line width=0.8pt, dash pattern=on 6pt off 6pt]  plot[domain=3.668743702840922:4.357163671019901,variable=\t]({1*2*cos(\t r)+0*2*sin(\t r)},{0*2*cos(\t r)+1*2*sin(\t r)});

\draw [shift={(-2-10,0)},line width=0.8pt]  plot[domain=-0.32578402496009407:5.188314208713056-6.283185307179586,variable=\t]({1*2*cos(\t r)+0*2*sin(\t r)},{0*2*cos(\t r)+1*2*sin(\t r)});

\draw [shift={(-4-10,-7)},line width=0.8pt,color=light gray]  plot[domain=1.318116071652818:1.5255288039532051,variable=\t]({1*6*cos(\t r)+0*6*sin(\t r)},{0*6*cos(\t r)+1*6*sin(\t r)});

\draw [shift={(-1-10,-7)},line width=0.8pt,color=light gray]  plot[domain=1.6031909385172858:1.8234765819369754,variable=\t]({1*6*cos(\t r)+0*6*sin(\t r)},{0*6*cos(\t r)+1*6*sin(\t r)});
\draw [shift={(4-10,2)},line width=0.8pt,color=light gray]  plot[domain=3.141592653589793:3.665797359877627,variable=\t]({1*6*cos(\t r)+0*6*sin(\t r)},{0*6*cos(\t r)+1*6*sin(\t r)});

\begin{scriptsize}
\draw [fill=black] (-2-10,0) circle (0.8pt);
\draw [fill=black] (4-10,2) circle (0.8pt);
\draw [fill=black] (-2-10,2) circle (0.8pt);
\draw [fill=black] (-1-10,-7) circle (0.8pt);
\draw [fill=black] (-4-10,-7) circle (0.8pt);
\draw [fill=black] (-1.1943336768604127-10,-1.0031479572997708) circle (0.8pt);
\draw [fill=black] (-2.5-10,-1.1905249806888747) circle (0.8pt);
\draw [fill=black] (-3.7284876133285287-10,-1.006146396191849) circle (0.8pt);
\draw [fill=black] (-0.10519982175151557-10,-0.6401033389301384) circle (0.8pt);
\draw [fill=black] (-1.0836778997437515-10,-1.7777383971163974) circle (0.8pt);
\draw [fill=black] (-2.389840983981042-10,-1.9616380928215822) circle (0.8pt);
\draw [fill=black] (-2.695603233892393-10,-1.8751362993122516) circle (0.8pt);

\draw[color=black] (-1.695603233892393-10,2.25) node {\Large{$A_1$}};
\draw[color=black] (-11,-0.7) node {\Large{$X$}};
\draw[color=black] (-9.7,-0.45) node {\Large{$X_1$}};

\draw[color=black] (-12.3,0) node {\Large{$O'$}};
\draw[color=black] (-5.6,2) node {\Large{$O_1$}};

\end{scriptsize}
\end{tikzpicture}
}
\end{minipage}

\vspace{3em}

\begin{minipage}{1.\textwidth}
\resizebox{30em}{!}{%
\hspace*{25em}
\begin{tikzpicture}[line cap=round,line join=round,x=1cm,y=1cm]
\clip(-10,-9) rectangle (6,2.5);
\draw [line width=0.8pt] (-2,0) circle (2cm);
\draw [line width=0.8pt] (4,2) circle (6cm);
\draw [line width=0.8pt] (-1,-7) circle (6cm);
\draw [line width=0.8pt,dotted] (-2,0)-- (4,2);
\draw [line width=0.8pt,dotted] (-2,0)-- (-1,-7);
\draw [line width=0.4pt,dash pattern=on 1pt off 1pt] (-1.1943336768604127,-1.0031479572997708)-- (-0.10519982175151557,-0.6401033389301384);
\draw [line width=0.4pt,dash pattern=on 1pt off 1pt] (-1.1943336768604127,-1.0031479572997708)-- (-1.0836778997437515,-1.7777383971163974);
\begin{scriptsize}
\draw [fill=black] (-2,0) circle (0.8pt);
\draw[color=black] (-2.2722781934821734,0.24331195480697174) node {\Large{$O'$}};
\draw [fill=black] (4,2) circle (0.8pt);
\draw[color=black] (4.043963628873572,1.6555348286194613) node {\Large{$O_1$}};
\draw [fill=black] (-2,2) circle (0.8pt);
\draw[color=black] (-1.6143506613613818,2.2013254331753506) node {\Large{$A_1$}};
\draw [fill=black] (-1,-7) circle (0.8pt);
\draw[color=black] (-0.6221707606726839,-6.725601027004884) node {\Large{$O_2$}};
\draw [fill=black] (-1.1943336768604127,-1.0031479572997708) circle (0.8pt);
\draw[color=black] (-1.1327479656877052,-0.6557055276581663) node {\Large{$X$}};
\draw [fill=black] (-0.10519982175151557,-0.6401033389301384) circle (0.8pt);
\draw[color=black] (0.3068467217924565,-0.40410283198449054) node {\Large{$X_1$}};
\draw [fill=black] (-1.0836778997437515,-1.7777383971163974) circle (0.8pt);
\draw[color=black] (-1.0879828517904713,-2.188419660238087) node {\Large{$X_2$}};
\draw [fill=black] (-3.3472196960489997,-1.4781742422927142) circle (0.8pt);
\draw[color=black] (-3.34603930328265,-1.9041887782320792) node {\Large{$B_2$}};
\draw [fill=black] (-0.292780303951,-1.0418257577072858) circle (0.8pt);
\draw[color=black] (0.02475597388122008,-1.1462612461112889) node {\Large{$A_2$}};
\draw [fill=black] (-0.8,-1.6) circle (0.8pt);
\draw[color=black] (-0.5037304832225648,-1.778639417849646) node {\Large{$B_1$}};
\end{scriptsize}
\end{tikzpicture}
}
\end{minipage}
\caption{The top figure shows how the internal grey arcs are carried over to the external dashed arcs. The bottom figure shows an illustration of the proof that the external arcs are disjoint.}
\label{fig 4}
\end{figure}
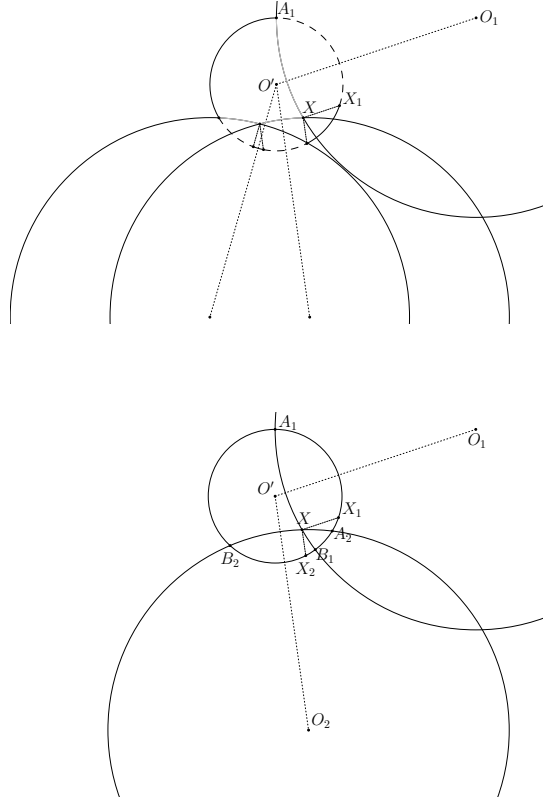

We will also need the following fact that has been known for centuries and by now has become part of the mathematical folklore.

\begin{lemma}[Folklore; see for example~\cite{Oss}]\label{lem iso}
Out of all connected open sets in the plane with a given perimeter, the circle has the largest area. In other words, each connected open set of perimeter $\ell$ has area at most $\ell^2 / 4 \pi$.
\end{lemma}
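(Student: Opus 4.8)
The plan is to give the classical Fourier-analytic argument due to Hurwitz, which is short and completely self-contained. Let $\Omega$ denote the connected open set, with perimeter $\ell$ and area $A$; we want $A\le \ell^2/4\pi$. The first step is a reduction to the case in which $\partial\Omega$ is a (piecewise) $C^1$ Jordan curve. If $\Omega$ is not simply connected, then filling in each bounded component of its complement only enlarges the area and removes boundary, hence does not increase $\ell$, so we may assume $\Omega$ is simply connected. A simply connected open set can then be exhausted from inside by smooth Jordan domains whose areas converge to $A$ and whose perimeters converge to $\ell$, so it suffices to prove the bound when $\partial\Omega$ is parametrized by a piecewise $C^1$ Jordan curve. (In each place where we invoke this lemma the relevant region is bounded by finitely many circular arcs, so this reduction is immediate and one need not worry about the general definition of perimeter.)

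Next I would parametrize $\partial\Omega$ proportionally to arc length over $[0,2\pi]$, writing $\gamma(t)=(x(t),y(t))$ with $x'(t)^2+y'(t)^2=(\ell/2\pi)^2$ for all $t$, so that $\int_0^{2\pi}\big(x'(t)^2+y'(t)^2\big)\,dt=\ell^2/2\pi$. Translating $\Omega$ so that $\int_0^{2\pi}x(t)\,dt=0$ and orienting $\gamma$ positively, Green's theorem gives $A=\int_0^{2\pi}x(t)\,y'(t)\,dt$. I would then expand $x$ and $y$ in Fourier series on $[0,2\pi]$, say $x=\tfrac12 a_0+\sum_{n\ge1}(a_n\cos nt+b_n\sin nt)$ and $y=\tfrac12 c_0+\sum_{n\ge1}(c_n\cos nt+d_n\sin nt)$, and apply Parseval's identity to the two integrals above. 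This turns the length constraint into $\pi\sum_{n\ge1}n^2(a_n^2+b_n^2+c_n^2+d_n^2)=\ell^2/2\pi$ and the area into $A=\pi\sum_{n\ge1}n(a_nd_n-b_nc_n)$.

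The conclusion is then a two-line estimate: since $|a_nd_n-b_nc_n|\le\tfrac12(a_n^2+b_n^2+c_n^2+d_n^2)$ and $n\le n^2$ for every $n\ge1$,
\[
A\le \pi\sum_{n\ge1}\frac{n}{2}\big(a_n^2+b_n^2+c_n^2+d_n^2\big)\le \pi\sum_{n\ge1}\frac{n^2}{2}\big(a_n^2+b_n^2+c_n^2+d_n^2\big)=\frac12\cdot\frac{\ell^2}{2\pi}=\frac{\ell^2}{4\pi},
\]
which is exactly the claimed bound. The only genuinely delicate point in a fully general treatment is the first paragraph — making sense of the perimeter of an arbitrary connected open set and checking that the smooth exhaustion neither loses area nor gains length in the limit — but for the present paper this is a non-issue, since we only ever apply the lemma to regions whose boundary is a finite union of circular arcs.
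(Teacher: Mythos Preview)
Your argument is the classical Hurwitz Fourier-series proof and is correct as written; the computation with Parseval and the final chain of inequalities are both fine, and your remark that in this paper the regions in question are bounded by finitely many circular arcs disposes of any subtleties in the reduction step.

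There is nothing to compare against: the paper does not prove Lemma~\ref{lem iso} at all but simply records it as folklore with a reference to the literature (Osserman). So your proposal goes well beyond what the paper does, supplying a complete self-contained proof where the authors were content to cite the result.
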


Recall that $\{ O_i \}_{i\in \mathcal I}$ is assumed to be a finite set of points that partitions $\Bb_R$ into 
$$    
\Rr_1 := \Bb_R \cap \left(\bigcup_{i\in \mathcal I} \Bb(O_i, r)\right) \qquad \text{ and } \qquad \Rr_2 := \Bb_R \setminus \Rr_1.
$$
For example, in the bottom part of Figure~\ref{fig 4}, the region $\Rr_1$ is bounded by the arcs $\overset{\frown}{A_1 X}$, $\overset{\frown}{XB_2}$ and $\overset{\frown}{B_2A_1}$ from the circles with centers $O_1$, $O_2$ and $O'$, respectively.
(Eventually, $\{ O_i \}_{i\in \mathcal I}$ will be fixed to be $D_{k-1}(v)$, the set of vertices that are at distance $k-1$ from $v$ for some $k \in \N$.) 
Note that $\Rr_2$ does not need to be a connected set. However, since the number of balls $(\Bb(O_i, r))_{i\in \mathcal I}$ is finite, the number of contact points between their boundaries is also finite, and therefore the number of connected components of $\Rr_2$ must also be finite. A component of $\Rr_2$ will be called \emph{large} if its boundary has length more than $6 \pi \varepsilon$, and it will be called \emph{small} otherwise.

Let us first concentrate on small components and consider the union of them. Suppose that for some $k\in \mathbb N$ there are $k$ small components with lengths of their boundaries $\ell_1, \ell_2, \ldots, \ell_k$. By Lemma~\ref{lem geom LB}, we get that 
\begin{equation*}
    \sum_{i=1}^k \ell_i \le \dfrac{2\pi r}{3},
\end{equation*}
and thus using Lemma~\ref{lem iso} we deduce that the area of the union of all small regions is at most
\begin{equation}\label{smallarea}
    \sum_{i=1}^k \frac{\ell^2_i}{4\pi} \le \dfrac{3\varepsilon}{2} \sum_{i=1}^k \ell_i \le \pi r\varepsilon. 
\end{equation}

Now we may concentrate on large components. Let $\gamma\subseteq \mathbb R^2$ be a closed curve. The \emph{$\varepsilon$-tube $t_{\varepsilon}(\gamma)$} around $\gamma$ is the set of points $Q \in \mathbb R^2$ such that $d_E(\gamma, Q)\le \varepsilon$. Moreover, for any arc $\overset{\frown}{a}$ of a circle $c$ with radius at least $r/3 > 2\varepsilon$, define the \emph{$\varepsilon$-cut tube $t^c_{\varepsilon}(\overset{\frown}{a})$} around $\overset{\frown}{a}$ as the intersection of $t_{\varepsilon}(c)$ with the sector of $c$, corresponding to the arc $\overset{\frown}{a}$. In the next observation, the diameter of an arc is the longest (Euclidean) distance between some two points in this arc.

\begin{observation}\label{ob arc}
Let $A,B$ be two points inside a ball $\Bb$ with radius $r_1$. Let $\overset{\frown}{AB}$ be an arc between $A$ and $B$ with diameter $|AB|$, which is part of a circle $c$ with radius $r_2 > r_1$. Then, $\overset{\frown}{AB}\subseteq \Bb$.
\end{observation}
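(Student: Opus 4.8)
The plan is to reduce the claim to a statement about a single chord and the circular segment it cuts off. First I would observe that the arc $\overset{\frown}{AB}$ has diameter $|AB|$, which by definition means that $A$ and $B$ are the two points of the arc realizing the maximal distance; in particular $\overset{\frown}{AB}$ lies entirely on one side of the line $AB$ and its central angle (as seen from the center $O_2$ of $c$) is at most $\pi$. Hence the arc is contained in the closed circular segment $\Sigma$ of $c$ bounded by the chord $AB$ — i.e. the region between the chord and the arc. So it suffices to show $\Sigma \subseteq \Bb$.

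The key step is then purely about the segment $\Sigma$. I would argue that the point of $\Sigma$ farthest from any fixed point $P$ lies on the boundary $\partial\Sigma$, which consists of the chord $AB$ and the arc $\overset{\frown}{AB}$. On the chord, the distance to $P$ is maximized at an endpoint, $A$ or $B$. On the arc, the farthest point from the center $O_1$ of $\Bb$ is where the ray from $O_1$ exits, and I would compare it to the endpoints: since $c$ has radius $r_2 > r_1$ and the arc subtends an angle at most $\pi$ at $O_2$, the arc "bulges away" from the chord by at most the sagitta, which is bounded by $r_2 - \sqrt{r_2^2 - |AB|^2/4}$; one checks this is at most the distance one could gain, because the same chord inside $\Bb$ (radius $r_1 < r_2$) would bulge \emph{more}. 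More cleanly: any point $Y \in \Sigma$ can be written as a convex-type perturbation of the chord, and the farthest point of $\Sigma$ from $O_1$ is attained at $A$ or $B$. Since $A, B \in \Bb$ by hypothesis, we get $d_E(O_1, Y) \le \max(d_E(O_1,A), d_E(O_1,B)) \le r_1$, hence $Y \in \Bb$, and therefore $\overset{\frown}{AB} \subseteq \Sigma \subseteq \Bb$.

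The main obstacle is making the "the farthest point of the segment from $O_1$ is an endpoint of the chord" claim rigorous without a messy case analysis on the position of $O_1$ relative to $\Sigma$. I expect the cleanest route is the following: the function $Y \mapsto d_E(O_1,Y)^2$ is convex, so its maximum over the convex hull of $\Sigma$ is attained at an extreme point of that hull; the extreme points of $\conv(\Sigma)$ are exactly $A$, $B$, and the points of the arc $\overset{\frown}{AB}$. For a point $Z$ on the arc, strict convexity of the disc of radius $r_2$ together with $r_2 > r_1 \ge \max(|O_1A|,|O_1B|)$ can be used to show $d_E(O_1,Z) \le \max(d_E(O_1,A), d_E(O_1,B))$: indeed $Z$ lies on the far side of $AB$, inside the lens $\Bb(A,r_2)\cap\Bb(B,r_2)$ region cut appropriately, and a short argument with the perpendicular bisector of $AB$ shows no point of the arc can be farther from $O_1$ than both endpoints. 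Once that lemma is in place, the conclusion is immediate, and I would keep the whole proof to a few lines referencing convexity and the elementary geometry of circular segments.
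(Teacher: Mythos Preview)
Your plan has a circularity you should see. After the convexity reduction, you still need to show that every point $Z$ on the arc satisfies $d_E(O_1,Z)\le \max(d_E(O_1,A),d_E(O_1,B))$. But set $r_1':=\max(d_E(O_1,A),d_E(O_1,B))\le r_1<r_2$: this inequality is precisely Observation~\ref{ob arc} with $r_1$ replaced by $r_1'$. So the extreme-point argument does no real work --- the extreme points of $\conv(\Sigma)$ are $A$, $B$, \emph{and every point of the arc}, and handling the arc points is the whole problem. Your hints about lenses and the perpendicular bisector do not constitute an argument; if you try to flesh them out you will find you are simply proving the observation from scratch.

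The paper's proof avoids this by a short topological argument. Since $A,B\in c\cap\Bb$ while $c$ has diameter $2r_2>2r_1=\mathrm{diam}(\Bb)$, the circle $c$ cannot lie entirely inside $\Bb$, so $c$ and $\partial\Bb$ meet in two points, splitting $c$ into an arc inside $\Bb$ and an arc outside. The inside arc, being contained in $\Bb$, has diameter at most $2r_1<2r_2$ and hence is at most a semicircle; therefore the outside arc is at least a semicircle and has diameter $2r_2$. Now if $\overset{\frown}{AB}$ ever left $\Bb$, then (since its endpoints $A,B$ lie in $\Bb$) it would have to contain the entire outside arc, forcing $\mathrm{diam}(\overset{\frown}{AB})\ge 2r_2>2r_1\ge |AB|$, contradicting the hypothesis that the diameter of $\overset{\frown}{AB}$ equals $|AB|$. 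This is three lines; I recommend you replace your plan with it.
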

\begin{proof}
Since the radius of $c$ is larger than the radius of $\Bb$ and $A,B\in \Bb$, $c$ and $\partial \Bb$ must intersect in two points. Thus, either the arc $\overset{\frown}{AB}$ is entirely contained in $\Bb$ or it starts in $\Bb$, contains every point in $c \cap \Bb^c$ and then ends in $\Bb$. In the second case, the diameter of the arc $\overset{\frown}{AB}$ would be $2r_2 > 2r_1$, which would lead to a contradiction since $|AB|$ is clearly at most $2r_1$. Thus, $\overset{\frown}{AB}\subseteq \Bb$.
\end{proof}

\begin{lemma}\label{lem cut tubes}
For every large component $S$ of $\Rr_2$ we have that the area of $t_{\varepsilon}(\partial S)$ is at most the sum of the areas of the $\varepsilon$-cut tubes around the arcs participating in $\partial S$, that is, at most $2\varepsilon |\partial S|$.
\end{lemma}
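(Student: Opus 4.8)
The plan is to decompose $\partial S$ into the circular arcs $a_1,\dots,a_m$ it is made of — each $a_j$ lying on a circle $c_j$ of radius $\rho_j\ge r/3>2\varepsilon$ — with consecutive arcs meeting at corner points $V_1,\dots,V_m$, and then to compare $t_\varepsilon(\partial S)$ with the cut tubes arc by arc. A first, routine step is to compute in polar coordinates centred at the centre of $c_j$ that
$$
\mathrm{area}\big(t^{c_j}_\varepsilon(a_j)\big)=\int\!\!\int s\,ds\,d\theta = 2\varepsilon\rho_j\cdot(\text{angular measure of }a_j) = 2\varepsilon\,|a_j|,
$$
the integration running over the sector angles of $a_j$ and over $s\in[\rho_j-\varepsilon,\rho_j+\varepsilon]$; only $\rho_j>\varepsilon$ is used, which guarantees that the cut tube is swept out without overlap. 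Summing over $j$ shows that the right-hand side of the lemma equals $\sum_j 2\varepsilon|a_j|=2\varepsilon|\partial S|$, so it remains to prove $\mathrm{area}(t_\varepsilon(\partial S))\le\sum_j\mathrm{area}(t^{c_j}_\varepsilon(a_j))$.

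Next I would observe that $t_\varepsilon(\partial S)=\bigcup_j t_\varepsilon(a_j)$, and that a point within distance $\varepsilon$ of $a_j$ whose nearest point on $a_j$ is interior to $a_j$ lies in $t^{c_j}_\varepsilon(a_j)$ (it sits on the radial segment of $c_j$ through that nearest point), while the remaining points of $t_\varepsilon(a_j)$ lie within $\varepsilon$ of an endpoint of $a_j$. Hence $t_\varepsilon(\partial S)\subseteq\big(\bigcup_j t^{c_j}_\varepsilon(a_j)\big)\cup\bigcup_{\ell}\Bb(V_\ell,\varepsilon)$, and therefore
$$
\mathrm{area}\big(t_\varepsilon(\partial S)\big)\le\sum_j\mathrm{area}\big(t^{c_j}_\varepsilon(a_j)\big)-\Delta+\sum_{\ell}\mathrm{area}\Big(\Bb(V_\ell,\varepsilon)\setminus\bigcup_j t^{c_j}_\varepsilon(a_j)\Big),
$$
where $\Delta\ge 0$ is the total area by which the cut tubes overlap one another. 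Thus the task reduces to showing that, corner by corner, the overlap of the cut tubes of the two arcs meeting at $V_\ell$ already accounts for the uncovered part of $\Bb(V_\ell,\varepsilon)$; overlaps between cut tubes of non-consecutive arcs only help.

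The heart of the matter is thus a local statement at a corner $V$ where arcs $a$ and $a'$ meet, lying on circles of centres $P,P'$ and radii $\rho,\rho'\ge r/3\gg\varepsilon$. Two structural facts drive it. First, since $S$ is a component of $\Rr_2=\Bb_R\setminus\bigcup_i\Bb(O_i,r)$, and near a boundary point each of $\Bb_R$ and each disc-complement $\Bb(O_i,r)^c$ is a half-plane, $S$ is locally the intersection of two half-planes at $V$, so the corner is convex. Second, because $\rho,\rho'\gg\varepsilon$ the two arcs differ from their tangent lines at $V$ by only $O(\varepsilon^2/\rho)=o(\varepsilon)$ inside $\Bb(V,\varepsilon)$, so near $V$ the cut tube of $a$ is essentially the half — lying in the sector of $a$ — of the width-$2\varepsilon$ strip around the tangent line, and similarly for $a'$. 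The key geometric observation is then that $\Bb(V,\varepsilon)$ is contained in the parallelogram $\Pi$ obtained by intersecting the two width-$2\varepsilon$ strips (a unit vector has inner product at most $\varepsilon$ in absolute value with any vector of length $\le\varepsilon$). Writing $W$ for the convex cone $\mathrm{sector}(a)\cap\mathrm{sector}(a')$ at $V$, the uncovered part of $\Bb(V,\varepsilon)$ is contained in $\Bb(V,\varepsilon)\cap(-W)$, whereas the overlap of the two cut tubes contains $\Pi\cap W\supseteq\Bb(V,\varepsilon)\cap W$; since $\Bb(V,\varepsilon)$ is centrally symmetric about $V$, these two sets have equal area, so the overlap compensates the uncovered region. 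Feeding this back into the displayed inequality yields $\mathrm{area}(t_\varepsilon(\partial S))\le 2\varepsilon|\partial S|$. The step I expect to demand the most care is precisely this corner bookkeeping when some arc is very short, so that its entire cut tube and sector sit within $\varepsilon$ of a corner and the tangent-line picture degenerates: there one groups a maximal run of short arcs together and invokes Observation~\ref{ob arc} — a minor arc whose endpoints lie in an $\varepsilon$-ball stays inside that ball — to see that the tube around such a run is absorbed by the cut tubes of the two long arcs flanking it, together with the overlaps they create, so that no area is counted twice.
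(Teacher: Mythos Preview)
Your approach is essentially the same idea as the paper's: at each corner you want to match the ``missing'' piece of the $\varepsilon$-tube (the part of $\Bb(V_\ell,\varepsilon)$ not covered by cut tubes) against the overlap of the two adjacent cut tubes, and you observe that these two pieces are related by central symmetry about $V_\ell$. The paper packages the same observation slightly differently: at each corner $I$ it defines an \emph{external sector} $ES_S(I,\varepsilon)$ (the uncovered wedge) and an \emph{internal sector} $IS_S(I,\varepsilon)$ (the vertically opposite wedge, which sits inside both adjacent cut tubes), notes $|IS_S(I,\varepsilon)|=|ES_S(I,\varepsilon)|$, and then proves the inequality $\sum_{\overset{\frown}{a}}|t^c_\varepsilon(\overset{\frown}{a})|\ge |\bigcup_{\overset{\frown}{a}} t^c_\varepsilon(\overset{\frown}{a})|+\sum_I |IS_S(I,\varepsilon)|$.

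Where the paper is cleaner than your write-up is precisely the global bookkeeping you flag as delicate. When several corners lie within $2\varepsilon$ of one another (i.e.\ when short arcs occur), the per-corner overlap regions you want to sum are not disjoint, so you cannot simply add them up and compare to $\Delta$. The paper handles this with a pointwise multiplicity argument: it first shows (Claim~\ref{claim 1a}, and this is the only place the hypothesis that $S$ is \emph{large} is used) that at least two internal sectors are disjoint, and then deduces (Claim~\ref{claim 2}) that any point lying in $\ell$ internal sectors lies in at least $\ell+1$ cut tubes. Integrating this multiplicity inequality gives the global overlap bound in one stroke, with no separate treatment of short arcs. Your proposed fix (group runs of short arcs, invoke Observation~\ref{ob arc}, flank by long arcs) tacitly assumes long arcs exist; without the large-component hypothesis they need not, so you would have to use that hypothesis somewhere, and the paper's Claim~\ref{claim 1a} is the natural place.

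Two smaller remarks. First, your tangent-line approximation is unnecessary: since $V\in c_j$ and $\rho_j>\varepsilon$, the triangle inequality gives $\Bb(V,\varepsilon)\subseteq\{x:|d(x,P_j)-\rho_j|\le\varepsilon\}$ exactly, and the sector boundary through $V$ is exactly the radial line, so inside $\Bb(V,\varepsilon)$ the cut tube equals the half-ball $\{x:(x-V)\cdot t_a\ge 0\}$ with no $O(\varepsilon^2/\rho)$ error. Second, the parallelogram $\Pi$ is a detour; once you know $\Bb(V,\varepsilon)$ sits in both annuli, the uncovered and overlap regions are exactly the two opposite ``wedges'' of $\Bb(V,\varepsilon)$, and central symmetry finishes.
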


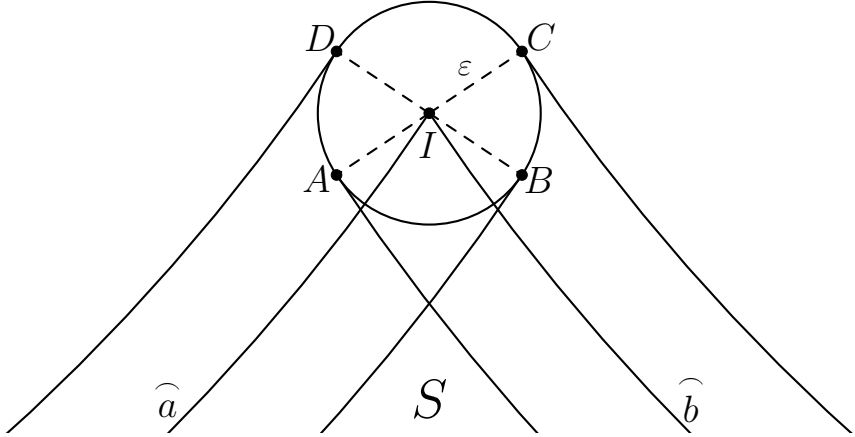
\begin{figure}
\centering
\begin{tikzpicture}[scale=4,line cap=round,line join=round,x=1cm,y=1cm]
\clip(-0.7596414975661192,-4.061317504723573) rectangle (2.8979409408262207,-2.335395791607204);
\draw [line width=0.8pt] (1,-3) circle (0.3704212547378905cm);
\draw [shift={(-5,1)},line width=0.8pt]  plot[domain=4.905600464232496:5.695182703632018,variable=\t]({1*6.8406812961900885*cos(\t r)+0*6.8406812961900885*sin(\t r)},{0*6.8406812961900885*cos(\t r)+1*6.8406812961900885*sin(\t r)});
\draw [shift={(-5,1)},line width=0.8pt]  plot[domain=4.905600464232496:5.695182703632018,variable=\t]({1*7.581523805665869*cos(\t r)+0*7.581523805665869*sin(\t r)},{0*7.581523805665869*cos(\t r)+1*7.581523805665869*sin(\t r)});
\draw [shift={(7,1)},line width=0.8pt]  plot[domain=3.7295952571373605:4.519177496536884,variable=\t]({1*7.581523805665869*cos(\t r)+0*7.581523805665869*sin(\t r)},{0*7.581523805665869*cos(\t r)+1*7.581523805665869*sin(\t r)});
\draw [shift={(7,1)},line width=0.8pt]  plot[domain=3.7295952571373605:4.519177496536884,variable=\t]({1*6.840681296190089*cos(\t r)+0*6.840681296190089*sin(\t r)},{0*6.840681296190089*cos(\t r)+1*6.840681296190089*sin(\t r)});
\draw [shift={(-5,1)},line width=0.8pt]  plot[domain=4.905600464232496:5.695182703632018,variable=\t]({1*7.211102550927981*cos(\t r)+0*7.211102550927981*sin(\t r)},{0*7.211102550927981*cos(\t r)+1*7.211102550927981*sin(\t r)});
\draw [shift={(7,1)},line width=0.8pt]  plot[domain=3.7295952571373605:4.519177496536884,variable=\t]({1*7.211102550927979*cos(\t r)+0*7.211102550927979*sin(\t r)},{0*7.211102550927979*cos(\t r)+1*7.211102550927979*sin(\t r)});
\draw [line width=0.8pt,dash pattern=on 4pt off 4pt] (1,-3)-- (1.3082091140336543,-2.794527257310897);
\draw [line width=0.8pt,dash pattern=on 4pt off 4pt] (1,-3)-- (1.3082091140336551,-3.2054727426891034);
\draw [line width=0.8pt,dash pattern=on 4pt off 4pt] (1,-3)-- (0.6917908859663452,-3.205472742689103);
\draw [line width=0.8pt,dash pattern=on 4pt off 4pt] (1,-3)-- (0.6917908859663457,-2.7945272573108975);

\begin{scriptsize}
\draw [fill=black] (1,-3) circle (0.5pt);
\draw[color=black] (0.9948550783502065,-3.106908673345728) node {\Large{$I$}};
\draw [fill=black] (1.3082091140336551,-3.2054727426891034) circle (0.5pt);
\draw[color=black] (1.3648983496294522,-3.2226452815453372) node {\Large{$B$}};
\draw [fill=black] (1.3082091140336543,-2.794527257310897) circle (0.5pt);
\draw[color=black] (1.3720384593895,-2.745445072786341) node {\Large{$C$}};
\draw [fill=black] (0.6917908859663457,-2.7945272573108975) circle (0.5pt);
\draw[color=black] (0.6376716973109126,-2.7483025590663352) node {\Large{$D$}};
\draw [fill=black] (0.6917908859663452,-3.205472742689103) circle (0.5pt);
\draw[color=black] (0.6248142110309185,-3.2197877952653435) node {\Large{$A$}};

\draw[color=black] (0.13,-3.95) node {\Large{$\overset{\frown}{a}$}};

\draw[color=black] (1.87,-3.95) node {\Large{$\overset{\frown}{b}$}};

\draw[color=black] (1.119156933509925,-2.849742120026143) node {\large{$\varepsilon$}};

\draw[color=black] (1,-3.95) node {\huge{$S$}};

\end{scriptsize}
\end{tikzpicture}
\caption{Illustration of the proof of Lemma~\ref{lem cut tubes}. Here, $I$ is an intersection point of the arcs $\overset{\frown}{a}$ and $\overset{\frown}{b}$ on the boundary of the large component $S$. The points $O_1$ and $O_2$ are not shown since they are too far and $S$ is contained between $\overset{\frown}{a}$ and $\overset{\frown}{b}$.}
\label{fig 6}
\end{figure}

\begin{proof}
The claim is trivial if $\Rr_2 = \Bb_R$. Otherwise, consider an arbitrary intersection point $I$ of two circles $c_1$ and $c_2$ with centers $O_1$ and $O_2$, respectively. Let $\overset{\frown}{a}$ and $\overset{\frown}{b}$ be the two arcs in $\partial S$, contained in the circles $c_1$ and $c_2$, respectively, which contain $I$ as an endpoint. Let $A,I,C,O_1$ be collinear points, lying on the line $IO_1$ in this order and such that $|AI| = |IC| = \varepsilon$. Let also $B,I,D,O_2$ be collinear points, lying on the line $IO_2$ in this order and such that $|BI| = |ID| = \varepsilon$. Then, define the \emph{internal sector at $I$}, denoted by $IS_S(I, \varepsilon)$, to be the sector $AIB$ of the ball $\Bb(I, \varepsilon)$, and also define the \emph{external sector at $I$}, denoted by $ES_S(I, \varepsilon)$, to be the sector $CID$ of the ball $\Bb(I, \varepsilon)$. Then, the tube $t_{\varepsilon}(\partial S)$ is obtained as a union of all cut tubes of the arcs in $\partial S$ and the external sectors at all intersection points of neighbouring arcs - see Figure~\ref{fig 6}. 

We need the following two claims before proceeding with the proof of the lemma.

\begin{claim}\label{claim 1a}
There exist two internal sectors without common points.
\end{claim}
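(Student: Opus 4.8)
The plan is to argue by contradiction. Suppose that every two of the internal sectors share a common point. Since each internal sector $IS_S(I,\varepsilon)$ lies inside the disc $\Bb(I,\varepsilon)$ centred at its apex corner $I$, a common point of $IS_S(I,\varepsilon)$ and $IS_S(I',\varepsilon)$ is at Euclidean distance at most $\varepsilon$ from each of $I$ and $I'$, so $d_E(I,I')\le 2\varepsilon$. Hence all the corners of $\partial S$ (the endpoints of the arcs forming $\partial S$) would lie in a common disc $D_0$ of radius $2\varepsilon$. The goal is then to show that this pinching of all corners into $D_0$ forces $\partial S$ to be short, contradicting that $S$ is \emph{large}, i.e.\ that $|\partial S|>6\pi\varepsilon$.

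To this end I would first control the length of each arc. Every arc of $\partial S$ is a sub-arc of some circle $\partial\Bb(O_i,r)$ (radius $r$) or of $\partial \Bb_R$ (radius $r/3$), so in all cases its radius is at least $r/3\gg\varepsilon$. By Observation~\ref{ob arc}, an arc of such a circle that lies inside $\Bb_R$ and has both endpoints in $D_0$ must be a minor arc — a major arc of a circle of radius at least $r/3$ cannot fit inside $\Bb_R$ — with the single possible exception of an arc lying on $\partial\Bb_R$ itself. A minor arc of a circle of radius $\rho\in[r/3,r]$ whose endpoints are at distance at most $2\varepsilon$ has length $2\rho\arcsin\!\big(\tfrac{\mathrm{chord}}{2\rho}\big)=(1+o(1))\cdot\mathrm{chord}\le 3\varepsilon$ for $n$ large, and bulges out of its chord by only $O(\varepsilon^2/r)=o(\varepsilon)$. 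Consequently, setting aside the at-most-one possible long arc on $\partial \Bb_R$, every remaining arc of $\partial S$ stays within distance $(2+o(1))\varepsilon$ of $D_0$. In the generic case in which there is no long arc on $\partial\Bb_R$, the whole curve $\partial S$, and hence the region $S$ it encloses, is contained in a disc $D_0'$ of radius $(2+o(1))\varepsilon$; then $|S|\le \pi\,(2+o(1))^2\varepsilon^2$, and Lemma~\ref{lem iso} (the isoperimetric inequality) yields $|\partial S|\le \sqrt{4\pi|S|}\le (4\pi+o(1))\varepsilon<6\pi\varepsilon$, the desired contradiction. (It is worth stressing that one genuinely needs the isoperimetric step here: a simple closed curve confined to a small disc can still be long, and it is the near-straightness of arcs of radius $\ge r/3$ together with the area bound $|S|\le|D_0'|$ that rules this out.)

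The step I expect to be the main obstacle is the exceptional case in which $\partial S$ contains a long (near-full) arc of $\partial\Bb_R$, since then $|\partial S|$ alone can exceed $6\pi\varepsilon$ and the isoperimetric contradiction above is unavailable. Here I would use that, since we are in the non-trivial case $\Rr_2\neq\Bb_R$, the part of $\partial S$ complementary to this long outer arc consists of arcs lying on the circles $\partial\Bb(O_i,r)$, and if all corners are pinched into $D_0$ these inner arcs are all short and pinned near $D_0$; thus $\Rr_1$ is confined to a neighbourhood of $D_0$ and $S$ is essentially $\Bb_R$ with a small region removed near one point of $\partial\Bb_R$. In that configuration I would check the internal sectors at the (few) corners involved directly — the two endpoints of the long outer arc, together with the fact that $S$ lies on \emph{both} sides of the short inner arcs forces two of the associated internal sectors to point into disjoint portions of $S$ near $D_0$ — or, alternatively, absorb this case into the bookkeeping of Lemma~\ref{lem cut tubes} by treating the outer circle $\partial\Bb_R$ separately from the balls $\Bb(O_i,r)$. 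Either way, a careful but elementary case analysis of this ``one long outer arc'' situation is where the real work lies.
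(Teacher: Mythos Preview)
Your main argument contains a genuine error: you apply the isoperimetric inequality in the wrong direction. Lemma~\ref{lem iso} says that a region of perimeter $\ell$ has area at most $\ell^2/4\pi$; equivalently, $|\partial S|\ge \sqrt{4\pi|S|}$. Knowing that $|S|$ is small therefore gives a \emph{lower} bound on $|\partial S|$, not the upper bound $|\partial S|\le\sqrt{4\pi|S|}$ that you invoke. Your parenthetical remark shows you already sensed the danger (``a simple closed curve confined to a small disc can still be long''), but near-straightness of the individual arcs does not rescue this: one can assemble many short, nearly straight arcs into a simple closed zigzag inside $D_0'$ whose total length is as large as you like, while the enclosed area stays tiny. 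So nothing in your argument actually bounds $|\partial S|$ from above.

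The paper closes this gap with a different tool. Once all corners lie in $\Bb(I,2\varepsilon)$, Observation~\ref{ob arc} forces the arcs (and hence $S$) into $\Bb(I,2\varepsilon)$; then Lemma~\ref{lem geom LB}, applied with $\Bb'=\Bb(I,2\varepsilon)$, gives directly that the part of $\partial S$ coming from the circles $\partial\Bb(O_i,r)$ has length at most $2\pi\cdot 2\varepsilon=4\pi\varepsilon$. The possible contribution from $\partial\Bb_R$ is handled separately by the curvature comparison $|\partial S\cap\partial\Bb_R|\le|\Bb(I,2\varepsilon)\cap\partial\Bb_R|\le 2\pi\varepsilon$, yielding $|\partial S|\le 6\pi\varepsilon$ in total. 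The point is that Lemma~\ref{lem geom LB} is precisely the ``projection'' device that rules out zigzags: it maps each arc of $\partial S$ to a longer arc on $\partial\Bb'$ with disjoint images, so the total cannot exceed $|\partial\Bb'|$. This is the structural ingredient your approach is missing, and your sketch for the ``long outer arc'' case remains correspondingly vague without it.
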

\begin{proof}[Proof of the claim]
Suppose for a contradiction that each pair of internal sectors intersect. Fix one internal sector with center $I$. Then, by the triangle inequality every intersection point of two neighbouring arcs in $\partial S$ is at distance at most $2\varepsilon$ from $I$. Since $r/3 > 2\varepsilon$, by Observation~\ref{ob arc} every arc in $\partial S$ is contained in $\Bb(I, 2\varepsilon)$ and therefore $S\subseteq \Bb(I, 2\varepsilon)$. If $\partial \Bb_R\cap \partial S = \emptyset$, Lemma~\ref{lem geom LB} with $\Bb' = \Bb(I, 2\varepsilon)$ implies that the perimeter of $S$ is at most $|\partial \Bb(I, 2\varepsilon)| = 4\pi \varepsilon$, contradicting with the fact that $S$ is a large component. 
Otherwise, the perimeter of $S$ is bounded from above by the sum of $|\partial S\cap \partial \Bb_R|$ and the perimeter of the region containing $S$, in $\Bb(I, 2\varepsilon)\cap \left(\bigcup_{i\in \mathcal I} \Bb_i\right)^c$. Roughly speaking, the region described above is obtained by ``taking out'' the ball $\Bb_R$. First, note that $\partial S\cap \partial \Bb_R\subseteq \Bb(I, 2\varepsilon)\cap \partial \Bb_R$ and therefore, since the curvature of $\partial \Bb_R$ is smaller than the curvature of $\partial \Bb(I, 2\varepsilon)$, we have $|\partial S\cap \partial \Bb_R|\le |\Bb(I, 2\varepsilon)\cap \partial \Bb_R|\le |\partial \Bb(I, 2\varepsilon)|/2 = 2\pi \varepsilon$. Second, one may apply Lemma~\ref{lem geom LB} to $\Bb(I, 2\varepsilon)\cap \left(\bigcup_{i\in \mathcal I} \Bb_i\right)^c$ with $\Bb' = \Bb(I, 2\varepsilon)$ to conclude that the perimeter of $\Bb(I, 2\varepsilon)\cap \left(\bigcup_{i\in \mathcal I} \Bb_i\right)^c$ is at most $|\partial \Bb(I, 2\varepsilon)| = 4\pi \varepsilon$. In total we get $|\partial S|\le 6\pi \varepsilon$, which again is a contradiction with the fact that $S$ is a large component.
\end{proof}

\begin{claim}\label{claim 2}
Let $\ell\in \mathbb N$. If a point $P$ is contained in $\ell$ internal sectors, it must be contained in the $\varepsilon$-cut tubes around at least $\ell+1$ of the arcs in $\partial S$.
\end{claim}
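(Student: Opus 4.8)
The plan is to derive \textbf{Claim~\ref{claim 2}} from one local geometric fact about a single vertex, plus one combinatorial count on the arc--structure of $\partial S$. Fix an intersection point $I$ of two arcs $\overset{\frown}{a}\subseteq c_1$ and $\overset{\frown}{b}\subseteq c_2$ of $\partial S$, with $c_1=\partial\Bb(O_1,r)$ and $c_2=\partial\Bb(O_2,r)$ (the few vertices lying on $\partial\Bb_R$ are dealt with at the end). \textbf{Step 1} is the statement that the internal sector at $I$ lies inside \emph{both} adjacent cut tubes, i.e.\ $IS_S(I,\varepsilon)\subseteq t^{c_1}_{\varepsilon}(\overset{\frown}{a})\cap t^{c_2}_{\varepsilon}(\overset{\frown}{b})$. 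The ``tube'' part is immediate: if $Q\in IS_S(I,\varepsilon)$ then $d_E(I,Q)\le\varepsilon$, and since $I\in c_1\cap c_2$ we get $d_E(Q,c_1)\le\varepsilon$ and $d_E(Q,c_2)\le\varepsilon$, so $Q$ belongs to the $\varepsilon$-tubes of $c_1$ and of $c_2$.

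For the ``sector'' part I would argue as follows (symmetrically for $c_1$ and $c_2$, so take $c_1$). Near $I$ the angular sector of $c_1$ attached to $\overset{\frown}{a}$ is the half-plane $\{x:\langle x-I,\mathbf{t}\rangle\ge 0\}$, where $\mathbf{t}$ is the unit tangent to $\overset{\frown}{a}$ at $I$ pointing along the arc; its boundary is exactly the line $O_1I$, because $\mathbf{t}\perp\vv{IO_1}$. The two bounding radii of $IS_S(I,\varepsilon)$ are $\vv{IA}$ and $\vv{IB}$, pointing from $I$ away from $O_1$ and away from $O_2$. Since $\vv{IA}$ is antiparallel to $\vv{IO_1}$ we have $\langle\vv{IA},\mathbf{t}\rangle=0$, so $\vv{IA}$ lies on the boundary of the half-plane. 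Moreover, near $I$ the arc $\overset{\frown}{a}$ is precisely the boundary ray on $\partial\Bb(O_1,r)$ of the local wedge $\{\text{outside }\Bb(O_1,r)\}\cap\{\text{outside }\Bb(O_2,r)\}$ that contains $S$; hence $\mathbf{t}$, as a direction from $I$, points into $\{\text{outside }\Bb(O_2,r)\}$, i.e.\ $\langle\mathbf{t},\vv{IO_2}\rangle\le0$, which is $\langle\mathbf{t},\vv{IB}\rangle\ge0$. Because $\angle AIB=\angle O_1IO_2\le\pi$, the whole sector $IS_S(I,\varepsilon)$ lies between $\vv{IA}$ and $\vv{IB}$ and therefore inside $\{x:\langle x-I,\mathbf{t}\rangle\ge0\}$; this gives $Q\in t^{c_1}_{\varepsilon}(\overset{\frown}{a})$.

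\textbf{Step 2 (the count).} Suppose $P$ lies in the $\ell$ internal sectors at the distinct intersection points $I_1,\dots,I_\ell$ of $\partial S$. By Step~1, $P$ lies in the cut tube of every arc of $\partial S$ incident to one of $I_1,\dots,I_\ell$, and at each $I_j$ the two incident arcs come from two different circles, hence are distinct. View $\partial S$ as a disjoint union of closed curves, each a cyclic sequence of arcs whose vertices are the intersection points; on one such cycle a run of $s$ consecutive chosen vertices is incident to exactly $s+1$ arcs, and arcs from non-adjacent runs are distinct, so any $\ell$ vertices that do not exhaust the vertex set of any single cycle are incident to at least $\ell+1$ arcs. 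The only failure mode is $P$ lying in the internal sectors at \emph{all} vertices of some boundary cycle $C$; but then all those vertices lie in $\Bb(P,\varepsilon)$, hence pairwise within distance $2\varepsilon$, so by Observation~\ref{ob arc} (using $r/3>2\varepsilon$) the whole of $C$, and therefore $S$ itself — here $\partial S$ is a single Jordan curve, since every component of $\Rr_1$ meets $\partial\Bb_R$ and so $S$ has no holes — would be contained in a ball of radius $2\varepsilon$, forcing $|\partial S|\le 4\pi\varepsilon<6\pi\varepsilon$ and contradicting that $S$ is a large component (this is exactly the reasoning behind Claim~\ref{claim 1a}). Hence $P$ lies in at least $\ell+1$ cut tubes.

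Finally, at an intersection point of some $c_i$ with $\partial\Bb_R$ the same computation as in Step~1 shows that the corresponding \emph{external} sector lies entirely inside a single cut tube (that of the arc on $c_i$), so such vertices never create external-sector area that an overlap of two cut tubes must account for; consequently they are irrelevant to the application of Claim~\ref{claim 2}, and the count above may be restricted to intersection points of two of the discs $\Bb(O_i,r)$. The main obstacle is Step~1: one has to pin down which of the two half-planes bounded by $O_1I$ is the ``arc side'' and verify the (tight) containment, and this is exactly where the inequality $\angle O_1IO_2\le\pi$ is needed; once Step~1 is in place, Step~2 is a routine counting argument modulo the large-component exclusion of the degenerate ``all vertices of a cycle'' case.
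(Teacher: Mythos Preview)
Your proof follows essentially the same two-step structure as the paper's: first establish that $P\in IS_S(I,\varepsilon)$ forces $P$ into both adjacent cut tubes, then count incident arcs. The paper in fact takes Step~1 entirely for granted --- its proof simply asserts that ``$P$ is contained in the $\varepsilon$-cut tube around every arc of $\partial S$ incident to any of $I_1,\dots,I_\ell$'' with no justification --- so your half-plane argument is filling in something the paper leaves implicit.

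The one organizational difference is in Step~2. The paper invokes Claim~\ref{claim 1a} \emph{at the outset} to produce a vertex $I$ whose internal sector misses $P$, cuts the boundary cycle open at $I$ into a path, and then the count ``$\ell$ marked vertices on a path touch at least $\ell+1$ edges'' is immediate. You instead run the count directly on cycles and invoke the Claim~\ref{claim 1a} reasoning only at the end to exclude the degenerate ``all vertices of a cycle'' case. These are equivalent; the paper's ordering is a little cleaner since it avoids re-deriving the small-perimeter contradiction and the Jordan-curve remark inline.

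One small caution on Step~1: your argument verifies that $IS_S(I,\varepsilon)$ lies on the correct side of the radial line $O_1I$, but the cut tube $t^{c_1}_\varepsilon(\overset{\frown}{a})$ is also bounded by the radial line $O_1I'$ through the \emph{other} endpoint of $\overset{\frown}{a}$, and when $|\overset{\frown}{a}|$ is shorter than $\varepsilon$ this second constraint is not automatic from what you wrote. The paper's proof does not address this point either, so you are at least as rigorous; but if you want the local inclusion to be airtight you should say a word about that second boundary (or note that short arcs can be absorbed into the count).
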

\begin{proof}[Proof of the claim]
By Claim~\ref{claim 1a}, there exists an internal sector $IS_S(I, \varepsilon)$, which does not contain $P$. Let us enumerate the arcs along $\partial S$, starting from one of the arcs, incident to $I$, and finish with the other arc, incident to $I$. Let $I_1, I_2,\dots, I_{\ell}$ be the points, for which $P\in \cap_{i\in [\ell]} IS_S(I_i, \varepsilon)$. Then, $P$ is contained in the $\varepsilon$-cut tube around every arc of $\partial S$, incident to any of $I_1, I_2, \dots,I_{\ell}$, and there are at least $\ell+1$ such arcs.
\end{proof}

By Claim~\ref{claim 2}, one may directly deduce that
\begin{align*}
    \sum_{\overset{\frown}{a}\in \partial S} |t^c_{\varepsilon}(\overset{\frown}{a})| & \ge\hspace{0.3em}
    \left| \bigcup_{\overset{\frown}{a}\in \partial S} t^c_{\varepsilon}(\overset{\frown}{a})\right| + \sum_{I\, :\, I = \overset{\frown}{a} \cap \overset{\frown}{b}; \overset{\frown}{a}, \overset{\frown}{b} \in \partial S} |IS_S(I, \varepsilon)|\\
    & =\hspace{0.3em}
    \left| \bigcup_{\overset{\frown}{a}\in \partial S} t^c_{\varepsilon}(\overset{\frown}{a})\right| + \sum_{I\, :\, I = \overset{\frown}{a} \cap \overset{\frown}{b}; \overset{\frown}{a}, \overset{\frown}{b} \in \partial S} |ES_S(I, \varepsilon)|
    \ge |t_{\varepsilon}(S)|.
\end{align*}
The proof of the lemma is finished.
\end{proof}

\begin{corollary}\label{cor cut tubes}
The union of all $\varepsilon$-tubes around the boundaries of large components has area at most $4\pi \varepsilon r/3$.
\end{corollary}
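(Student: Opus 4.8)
The plan is short: I would deduce the corollary by summing, over the large components of $\Rr_2$, the per-component estimate of Lemma~\ref{lem cut tubes}, and then bound the resulting total boundary length by Lemma~\ref{lem geom LB}. Write $S_1,\dots,S_m$ for the large components of $\Rr_2$ (finitely many, as already observed). Lemma~\ref{lem cut tubes} gives $|t_{\varepsilon}(\partial S_j)|\le 2\varepsilon\,|\partial S_j|$ for every $j$, where $|\partial S_j|$ is the total length of the arcs separating $\Rr_1$ from $\Rr_2$ on the boundary of $S_j$. Since the area of a union is at most the sum of the areas, it will be enough to show $\sum_{j=1}^m|\partial S_j|\le 2\pi r/3$; then $\bigl|\bigcup_j t_{\varepsilon}(\partial S_j)\bigr|\le \sum_j|t_{\varepsilon}(\partial S_j)|\le 2\varepsilon\cdot(2\pi r/3)=4\pi\varepsilon r/3$ and the corollary follows.

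The one nontrivial step is the bound $\sum_{j=1}^m|\partial S_j|\le 2\pi r/3$, which I would argue as follows. Every arc counted in some $|\partial S_j|$ lies on a circle $\partial\Bb(O_i,r)$, with $\Rr_1$ on one side (inside the ball $\Bb(O_i,r)$) and the component $S_j\subseteq\Rr_2$ on the other; hence it is part of the boundary separating $\Rr_1$ from $\Rr_2$. Distinct components of $\Rr_2$ have interior-disjoint boundaries, so these arcs, collected over all $j$, form (up to finitely many points) a disjoint subfamily of that separating boundary. Applying Lemma~\ref{lem geom LB} with $\Bb'=\Bb_R=\Bb(O,r/3)$, so that $r'=r/3$, bounds the total length of the separating boundary by $2\pi r'=2\pi r/3$, which yields the claimed inequality.

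The part I expect to require the most care is precisely this bookkeeping: making sure that the quantity $|\partial S_j|$ appearing in Lemma~\ref{lem cut tubes} is exactly the length of the arcs to which the $\varepsilon$-cut tubes are attached (i.e.\ the arcs separating $\Rr_1$ from $\Rr_2$, rather than any portions of $\partial S_j$ lying along $\partial\Bb_R$), and that over all large components these arcs are disjoint and jointly covered by the single application of Lemma~\ref{lem geom LB}. Everything else is a one-line combination of Lemmas~\ref{lem cut tubes} and~\ref{lem geom LB} together with subadditivity of area.
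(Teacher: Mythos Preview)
Your proposal is correct and follows exactly the paper's own proof: apply Lemma~\ref{lem cut tubes} to each large component and then invoke Lemma~\ref{lem geom LB} with $\Bb'=\Bb_R$ (so $r'=r/3$) to bound the total boundary length by $2\pi r/3$. Your explicit remark about the bookkeeping (that the arcs counted must be those separating $\Rr_1$ from $\Rr_2$, which are disjoint across components and fall under a single application of Lemma~\ref{lem geom LB}) is precisely the point the paper leaves implicit.
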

\begin{proof}
This follows from Lemma~\ref{lem cut tubes}, applied for every large component of $\Rr_2$, and Lemma~\ref{lem geom LB}, which states that the union of the boundaries of these components has length at most $2\pi r/3$.
\end{proof}

Let us now come back to the proof of the lower bound for the localization number of dense graphs. 

\begin{proof}[Proof of Theorem~\ref{main: lower bound}]
Fix $r=r(n) \ge \log n$ and $\eps=\eps(n) = (\log n / r)^{1/3} \le 1$. Let $\Bb_R$ be the ball of radius $r/3$ centered in the center $O$ of the square $[0, \sqrt{n}]^2$. It follows from Lemma~\ref{lem:special_family} that a.a.s.\ there exists an $\eps$-special family of pairs of vertices of size $r^2 \eps^2 / 100$ in $\Bb_R$. We will show that a.a.s.\ a single sensor placed on a vertex $v$ cannot distinguish more than $100 \eps^3 r$ special pairs which will imply the desired lower bound of $(r^2 \eps^2 / 100) / (100 \eps^3 r) = 10^{-4} \, r / \eps = 10^{-4} \, r^{4/3} / \log^{1/3} n$.

Indeed, suppose that the cops must use less than $10^{-4} \, r^{4/3} / (\log n)^{1/3}$ sensors in each round. Using the notation from Section~\ref{sec:perfect_information}, when the cops start the game by putting their sensors on $S_1$, at least two vertices (namely, some special pair) in $\Bb_R$ have the same $S_1$-signature. The robber may choose the equivalence class $R^1_{j_1}$ these two vertices belong to and remain undetected in the very first round. Suppose now that $R^{i-1}_{j_{i-1}}$ contains at least two vertices from $\Bb_R$. In round $i$, once the cops choose $S_i$, we get the partition $N[R^{i-1}_{j_{i-1}}]=R^i_{1}\cup R^i_{2}\cup \ldots \cup R^i_{\ell_i}$ with every vertex in $R^i_{j}$ having the same $S_i$-signature. Since the ball $\Bb_R$ has radius $r/3$, $N[R^{i-1}_{j_{i-1}}]$ includes \emph{all} vertices in $\Bb_R$. Hence, again, the robber may choose some $R^i_{j_i}$ of size at least 2 as there is at least one special pair of vertices in $\Bb_R$ with the same $S_i$-signature. It follows that $|R^i_{j_i}| \ge 2$ for all $i$ and so the robber has a winning strategy. 

\medskip

It remains to show that a.a.s.\ a single sensor placed on a vertex $v$ cannot distinguish more than $100 \eps^3 r$ special pairs. Clearly, if $v$ is in $\Bb_R$, then it can distinguish at most one special pair, the one including the vertex $v$ itself. Hence, we may concentrate on sensors placed on vertices outside of the ball the robber is hiding at. To that end, we will use de-Poissonization technique as explained in Section~\ref{sec:Poisson} and show that the desired property holds with probability $1-o(n^{-2})$ for a given vertex $v$ outside of $\Bb_R$. The desired conclusion will hold by a union bound over all vertices. 

Let $v$ be any vertex of $G\in \RT$ that is outside of the ball $\Bb_R$. We will carefully expose the graph in a \emph{breadth-first-search} fashion. Recall that $D_i(v)$ denotes the set of vertices at graph distance $i$ from $v$. We start with $D_0(v) = \{v\}$. Iteratively, as long as no vertex in $D_i(v)$ is at Euclidean distance at most $r$ from $\Bb_R$ we do the following. Since vertices in $D_{i+1}(v)$ must belong to 
$$
U(D_{i}(v)) := \bigcup_{u \in D_{i}(v)} \Bb(u,r),
$$ 
we expose all vertices in the part of $U(D_{i}(v))$ that is not exposed yet. Vertices that are found there form the set $D_{i+1}(v)$. We stop the process prematurely if no vertex is found in $U(D_{i}(v))$; in this case $v$ does not distinguish \emph{any} special pair and so the desired property holds. Suppose that for some $k \in \N$ we stopped the process because for the first time some vertex $w \in D_{k-1}(v)$ is at Euclidean distance at most $r$ from $\Bb_R$. If $w$ is in fact at distance at most $r/3$ from $\Bb_R$, then all points in $\Bb_R$ are at distance at most $r$ from $w$. It follows that, despite the fact that we did not expose the ball yet, we can safely claim that all vertices in $\Bb_R$ will end up in $D_k(v)$. On the other hand, if no vertex in $D_{k-1}(v)$ is at Euclidean distance at most $r/3$ from $\Bb_R$, then we may pick the point $A$ that is on the segment between $w$ and the center $O$ of the ball $\Bb_R$ and at distance, say, $r/2$ from $O$.
By Observation~\ref{obs:vertices}, we may assume that there is a vertex at distance at most $2 \sqrt{\log n} = o(r)$ from $A$. (This is a standard technique in the theory of random graphs but it is quite delicate. We wish to use the properties guaranteed a.a.s.\ by Observation~\ref{obs:vertices}, but we also wish to avoid working in a conditional probability space, as doing so would make the necessary probabilistic computations intractable. Thus, we will work in the unconditional probability space but in our argument we assume that the properties mentioned in the observation hold. Since these properties hold a.a.s., the probability of the set of outcomes in which our argument does \emph{not} apply to is $o(1)$, and thus can be safely excised at the end of the argument.) This vertex is not only adjacent to $w$ but also all points in $\Bb_R$ are at distance at most $r$ from it. Hence, this time we can safely claim that all vertices in $\Bb_R$ will end up in $D_k(v) \cup D_{k+1}(v)$. 

Let us summarize the current situation: $D_{k-1}(v)$ is a set of vertices at distance $k-1$ from $v$ that partitions $\Bb_R$ into 
$$    
\Rr_1 := \Bb_R \cap \left(\bigcup_{w \in D_{k-1}(v)} \Bb(w, r)\right) \qquad \text{ and } \qquad \Rr_2 := \Bb_R \setminus \Rr_1.
$$
The region $\Rr_1$ is non-empty but $\Rr_2$ might be empty. The ball $\Bb_R$ is not exposed yet but we do know that all vertices in $\Rr_1$ (if there are any) will end up in $D_k(v)$ and all vertices in $\Rr_2$ (again, if there are any) will end up in $D_{k+1}(v)$. In order for a pair of vertices $(a,b)$ to be distinguished by $v$, one of the two vertices (say, $a$) has to be in $\Rr_1$ and the other one (say, $b$) has to be in $\Rr_2$. More importantly, if $a$ and $b$ are at distance at most $\eps$ from each other, $b$ has to belong to some small component of $\Rr_2$ or to some $\eps$-tube around the boundary of some large component (but still in $\Rr_2$).

Let us expose vertices in $\Rr_2$. By~\eqref{smallarea} and Corollary~\ref{cor cut tubes}, we get that the total number of vertices in all small components and in the union of all $\eps$-tubes around the boundaries of large components is stochastically bounded from above by the random variable $X \sim \textrm{Po}(\lambda)$ with $\lambda := \pi \eps r + 4 \pi \eps r / 3 = 7 \pi \eps r / 3$. We have
\begin{eqnarray}
\Prob \Big( X \ge 2 \lambda \Big) &=& \sum_{i \ge 2\lambda} \frac {\lambda^i}{i!} e^{-\lambda} ~~\le~~ 2 \ \Prob \Big( X = 2 \lambda \Big) ~~=~~ 2 \ \frac {\lambda^{2 \lambda}}{(2 \lambda)!} e^{-\lambda} \nonumber \\
&\le& 2 \ \frac {\lambda^{2 \lambda}}{(2 \lambda/e)^{2\lambda}} e^{-\lambda} = 2 \ \left( \frac {e}{4} \right)^{\lambda} ~~\le~~ 2 \ \exp \left( - \frac {\lambda}{3} \right). \label{eq:Poisson}
\end{eqnarray}
Since $r \ge \log n$, we get that $\lambda = \frac {7 \pi}{3} \eps r  = \frac {7 \pi}{3} r^{4/3} / \log^{1/3} n \ge \frac {7 \pi}{3} \log n > 7 \log n$. It follows from~(\ref{eq:Poisson}) that with probability $1-o(n^{-2})$, $X \le 2 \lambda$. Each vertex $b$ that appears in this region eliminates at most one special pair but this itself is not enough to get the desired bound. 

We condition on the event that there are at most $2 \lambda = 14 \pi \eps r / 3$ vertices $b$ in $\Rr_2$ that can potentially participate in $\eps$-special pairs and argue as follows. Since the associated vertices $a$ have to be not only in $\Rr_1$ but also at distance at most $\eps$ from some vertex $b$ in $\Rr_2$, we expose vertices in $\Rr_1$ and check how many of them are close to some vertex~$b$ in $\Rr_2$. The number of such vertices $a$ is stochastically bounded from above by the random variable $Y \sim \textrm{Po}(\xi)$ with $\xi := (14 \pi \eps r / 3) (\pi \eps^2) = 14 \pi^2 \eps^3 r / 3 = (14 \pi^2 / 3) \log n > 46 \log n$. It follows from~(\ref{eq:Poisson}) that with probability $1-o(n^{-2})$, $Y \le 2 \xi = 28 \pi^2 \eps^3 r / 3 < 100 \eps^3 r$. Each such vertex $a$ that appears eliminates at most one special pair, and so the desired bound holds, and the proof is finished.
\end{proof}

Note that the previous proof gives the lower bounds of Part 1 and Part 2 in Theorem~\ref{thm:main}. Let us now consider sparser graphs. We first adjust the argument used above that gives a matching lower bound for $r$ very close to $\log n$ (Part 3) in Theorem~\ref{thm:main}), namely, we assume first that
$$
\frac {\log n}{(\log \log n)^{1/2} \log \log \log n} \le r \le \log n.
$$
By fixing $\eps=1$, we argue as in the proof of Lemma~\ref{lem:special_family} that a.a.s.\ there exists a $1$-special family of pairs of vertices of size $r^2 / 100$. The argument then proceeds as before: the total number of vertices in $\Rr_2$ in all small components and in the union of all $1$-tubes around the boundaries of large components is stochastically bounded from above by the random variable $X \sim \textrm{Po}(\lambda)$ with $\lambda := 7 \pi r / 3$. This time we fix 
$$
\beta := \frac {200 \log n}{\log (e \log n / r)} > 2 \lambda
$$
and notice that 
\begin{align*}
\Prob \Big( X \ge \beta \Big) &\le 2 \, \Prob \Big( X = \beta \Big) = 2 \, \frac { \lambda^\beta }{\beta!} e^{-\lambda} \le \frac {\lambda^\beta} {(\beta/e)^\beta} = \left( \frac {7 e \pi r}{3 \beta} \right)^\beta = \exp \left( - \beta \log \left( \frac {3 \beta}{7 e \pi r} \right) \right)\\
&=\hspace{0.3em} \exp\left(-\dfrac{200\log n}{\log(e\log n/r)}\log\left(\dfrac{600\log n/\log(e\log n/r)}{7e\pi r}\right)\right)\\
&\le \exp\left(-\dfrac{200\log n}{\log(e\log n/r)}\log\left(\dfrac{e\log n/r}{\log(e\log n/r)}\right)\right)\\
&=\hspace{0.3em} \exp\left(-200\log n\left(1 - \dfrac{\log\log(e\log n/r)}{\log(e\log n/r)}\right)\right) = o(1/n^2).
\end{align*}
Arguing as before, we get the lower bound of $(r^2 / 100) / \beta = \Theta( r^2 \log (e \log n / r) / \log n )$. Note that since $\eps = 1$ (so $r\eps = r\eps^3$), we have that the order of the number of points in $\Rr_2$ that $X$ counts would be comparable to the number of $1$-special pairs that one sensor may distinguish, and therefore the last stage in the proof of Part 1 and Part 2 of Theorem~\ref{thm:main} will not contribute and may be omitted.

\bigskip

Let us now concentrate on even sparser graphs for which we use a different argument. Tessellate the torus $\mathcal T_n$ into a family of squares $(S_i)_{i\in \mathcal I}$ of side lengths $3r$ and with centers $(O_i)_{i\in \mathcal I}$. Then, for every $i\in \mathcal I$, let $\Bb_i$ be the ball of radius $\log n/16r$ with center $O_i$. Finally, for every $i\in \mathcal I$, let $\Rr_i$ be the set of points $P$, for which $\Cc(P, r)\cap \Bb_i\neq \emptyset$.
The idea of this part is the following: for every square $S_i$ of the tessellation and the ball $\Bb_i$ inside $S_i$, note that the region $\Rr_i$ consists of the points that may distinguish some vertices inside $\Bb_i$. We will show that for many squares $S_i$, the corresponding region $\Rr_i$ contains no vertex of $G$, and at the same time there is one such square $S_i$ with many vertices inside $\Bb_i$. Since the robber can jump from one vertex to another inside the ball, she can only be trapped by putting a sensor on each vertex inside the ball. We fill in the details and start with the following preliminary result:
\begin{lemma}\label{lem:areari}
For every $i\in \mathcal I$, $\Rr_i$ has area $\pi \log n/4$ and is disjoint from $\Bb_i$. 
\end{lemma}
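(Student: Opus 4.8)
The plan is to identify $\Rr_i$ explicitly as a thin crown around $O_i$ and then read off both claims. Write $\rho := \log n/(16r)$ for the radius of $\Bb_i$, and for a point $P$ put $d := d_T(P,O_i)$. I would show that $P \in \Rr_i$ if and only if $r - \rho \le d \le r + \rho$, that is, $\Rr_i = \Dd(O_i, r-\rho, r+\rho)$. One direction is the triangle inequality: if $Q \in \Cc(P,r) \cap \Bb_i$, then $|d - r| = |d_T(P,O_i) - d_T(P,Q)| \le d_T(Q,O_i) \le \rho$. For the converse, assuming $r - \rho \le d \le r + \rho$, I would travel along the (locally straight, uniquely defined) geodesic issuing from $P$ in the direction of $O_i$ --- continued slightly past $O_i$ if $d < r$ --- and take the point $Q$ on it at arc length exactly $r$ from $P$; this point lies at distance $|d - r| \le \rho$ from $O_i$, so $Q \in \Cc(P,r) \cap \Bb_i$ and hence $P \in \Rr_i$.

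Next I would record the elementary scale estimates that let the torus be treated as the Euclidean plane here. Since $r \ge r_0 = 70\sqrt{\log n}$, we have $\rho = \log n/(16r) \le \sqrt{\log n}/1120 = o(\sqrt n)$, and since $r < \sqrt n/4$ this gives $r + \rho < \sqrt n/2$ once $n$ is large. Therefore every ball of radius at most $r+\rho$ in $T_n$ is isometric to a Euclidean disc of the same radius; in particular $\Bb(O_i, r-\rho) \subseteq \Bb(O_i, r+\rho)$, both have their Euclidean areas, and so
\[
|\Rr_i| = |\Dd(O_i, r-\rho, r+\rho)| = \pi(r+\rho)^2 - \pi(r-\rho)^2 = 4\pi r\rho = 4\pi r \cdot \frac{\log n}{16r} = \frac{\pi \log n}{4},
\]
which is the claimed value.

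Finally, disjointness is immediate from the crown description: every point of $\Bb_i$ is within distance $\rho$ of $O_i$, while every point of $\Rr_i = \Dd(O_i, r-\rho, r+\rho)$ is at distance at least $r - \rho$ from $O_i$, and $r - \rho > \rho$ because $\rho = \log n/(16r) < r/2$ (equivalently $8\log n < r^2$, which holds since $r \ge 70\sqrt{\log n}$); hence $\Rr_i \cap \Bb_i = \emptyset$. I do not expect a genuine obstacle here: the only point demanding care is keeping all the relevant regions strictly below the torus ``injectivity radius'' $\sqrt n/2$ so that distances, nestedness of balls, and areas match the Euclidean picture, which the standing assumptions $r_0 \le r < \sqrt n/4$ guarantee with room to spare.
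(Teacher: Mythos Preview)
Your proof is correct and follows essentially the same approach as the paper: identify $\Rr_i$ as the crown $\Dd(O_i,\,r-\rho,\,r+\rho)$ with $\rho=\log n/(16r)$, compute its area as $\pi(r+\rho)^2-\pi(r-\rho)^2=\pi\log n/4$, and deduce disjointness from $2\rho<r$. The paper simply asserts the crown description and the inequality; you supply the extra justification (both inclusions via the triangle inequality and an explicit geodesic construction, plus the check that $r+\rho<\sqrt{n}/2$ so the torus behaves Euclideanly), but the argument is the same.
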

\begin{proof}
For every $i\in \mathcal I$, $\Rr_i$ consists of all points at distance between $r - \log n/16r$ and $r + \log n/16r$ from $O_i$, so $|\Rr_i| = \pi(r + \log n/16r)^2 - \pi(r - \log n/16r)^2 = \pi\log n/4$. Moreover, $2\cdot \log n/16r < r$ since $r\ge \sqrt{\log n}$, so $\Rr_i\cap \Bb_i = \emptyset$.
\end{proof}

\begin{proof}[Proof of the lower bound of Theorem~\ref{thm:main}, Part 4)]

Expose the region $\bigcup_{i\in \mathcal I} \Rr_i\subseteq \bigcup_{i\in \mathcal I} S_i\setminus \Bb_i$. Note that the regions $(\Rr_i)_{i\in \mathcal I}$ are disjoint and for any $i\in \mathcal I$ the probability that no vertex of $G\in \mathcal T(n,r)$ falls into $\Rr_i$ is by Lemma~\ref{lem:areari}, $\exp(-\pi \log n/4) = 1/n^{\pi/4}$. Let $\mathcal J$ be the family of indices $i$, for which $\Rr_i$ does not contain any vertices of $G$. We conclude that the family of variables $(\mathds 1_{\Rr_i\cap V(G)=\emptyset})_{i\in \mathcal I}$ consists of independent Bernoulli variables with parameter $1/n^{\pi/4}$, so by Chernoff's bound $|\mathcal J| \ge n^{1-\pi/4}/18 r^2$ with probability $1-o(1/\sqrt{n})$. We condition on this event.

Now, what remains is to give a lower bound that holds with probability $1-o(1/\sqrt{n})$ for the maximum of the $|\mathcal J|\ge n^{1-\pi/4}/18 r^2\ge n^{0.2}$ (the last inequality holds for every large enough $n$) Poisson variables with mean $\lambda = \pi \log^2 n/(16 r)^2$, representing the number of vertices of $G$ in the balls $(\Bb_j)_{j\in \mathcal J}$. 
Set $\xi := \dfrac{\log n}{50 \log(r^2/\log n)}$. By a direct computation we get for every $r\ge r_0$ that $\xi\ge 2\lambda$ and that 
\begin{align*}
    \mathbb P\left(\forall j\in \mathcal J, |B_j|\le \xi\right) 
    &=\hspace{0.3em} \prod_{j\in \mathcal J} \mathbb P\left(|B_j|\le \xi\right)\\ 
    &\le\hspace{0.3em} \prod_{j\in \mathcal J} 2\mathbb P\left(|B_j|= \xi\right)\\ 
    &=\hspace{0.3em} \prod_{j\in \mathcal J} 2\exp\left(-\lambda\right) \dfrac{\lambda^{\xi}}{\xi !}\\ 
    &\le\hspace{0.3em} 2^{n^{0.2}}\exp\left(-n^{0.2}\lambda\right) \left(\dfrac{\lambda^{\xi}}{\xi !}\right)^{n^{0.2}}\\
    &\le\hspace{0.3em} 2^{n^{0.2}}\exp\left(-n^{0.2}\lambda\right) \left(\dfrac{e \lambda}{\xi}\right)^{n^{0.2}\xi}\\
    &=\hspace{0.3em} 2^{n^{0.2}}\exp\left(- n^{0.2} \dfrac{\pi \log^2 n}{(16 r)^2}\right) \left(\dfrac{e \pi \log^2 n/(16 r)^2}{\log n/(50 \log(r^2/\log n))}\right)^{\frac{n^{0.2}\log n}{50\log(r^2/\log n)}}\\
    &=\hspace{0.3em} 2^{n^{0.2}}\exp\left(- n^{0.2} \dfrac{\pi \log^2 n}{256 r^2}\right) \left(\dfrac{50 e \pi \log(r^2/\log n)}{256 r^2/\log n}\right)^{\frac{n^{0.2}\log n}{50\log(r^2/\log n)}}\\
    &=\hspace{0.3em} \left(2\exp\left(\dfrac{\log n}{50\log(r^2/\log n)} \log\left(\dfrac{50 e \pi \log(r^2/\log n)}{256 r^2/\log n}\right) - \dfrac{\pi \log^2 n}{256 r^2}\right)\right)^{n^{0.2}}.
\end{align*}
We conclude that the last probability is $o(1/\sqrt{n})$ since 
\begin{equation*}
    \dfrac{\log n}{50\log(r^2/\log n)} \log\left(\dfrac{50 e \pi \log(r^2/\log n)}{256 r^2/\log n}\right) < 0 \text{ and } \dfrac{\pi \log^2 n}{256 r^2}\ge 1
\end{equation*}
for every $r = o(\log n)$ and $r \ge r_0$, which gives an upper bound of $(2/e)^{n^{0.2}} =o(1/\sqrt{n})$.

By de-Poissonization we deduce that a.a.s.\ there is a ball $\Bb_i$ among $(\Bb_j)_{j\in \mathcal J}$ containing at least $\xi$ vertices of the random geometric graph $G\in \mathcal T(n, r)$. In particular, since by definition of the set $\mathcal J$ the vertices $\Bb_i\cap V(G)$ cannot be distinguished by a sensor outside $\Bb_i$, the robber can always escape from the cops in the presence of only $\xi - 2 = \Omega(\log n/\log(r^2/\log n))$ sensors by choosing to remain in the ball $\Bb_i$ after each step, thereby finishing the proof of the lower bound of Part 4) in Theorem~\ref{thm:main}.
\end{proof}

\section{Outlook and open problems}

In this paper we determined up to a multiplicative poly-logarithmic factor the localization number of the random geometric graph. As already mentioned in the introduction, for any graph $G$, we have $\zeta(G) \le \beta(G)$. Whereas in $\mathcal{G}(n,p)$ these two parameters are relatively close to each other, for many values of $r$ this is not the case for $G \in \RT$, as the following lemma shows. In fact, in view of the lower bound on $\zeta(G)$ given by Part 1) in Theorem~\ref{thm:main}, the following lemma shows that for $r \ll n^{3/10}$ the bounds are far from each other. For the sake of completeness, we also show the upper bound our approach for the localization number gives for $\beta(G)$. 
\begin{lemma}
Let $G \in \RT$. A.a.s.\ we have
\begin{enumerate}
    \item[(i)] If $r \gg 1$ and $r \le c\sqrt{n/\log n}$ for small enough $c > 0$,  then $\beta(G) = \Omega(n/r^2)$.
    \item[(ii)] If $\log^{3/2}n \le r \le \sqrt{n}/4$, then $\beta(G)=O((n \log^{2/3} n)/r^{2/3})$.
\end{enumerate}

\end{lemma}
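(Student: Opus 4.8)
The plan is to handle the two parts separately, reusing the tools already developed.

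\medskip
\noindent\textbf{Part (i): $\beta(G)=\Omega(n/r^2)$.} The idea is to exhibit $\Omega(n/r^2)$ pairwise vertex‑disjoint pairs of \emph{true twins}, i.e.\ pairs $\{u,v\}$ with $N_G[u]=N_G[v]$. Since $N_G[u]=N_G[v]$ forces $d_G(w,u)=d_G(w,v)$ for every vertex $w\notin\{u,v\}$, any resolving set must contain at least one vertex of each true twin pair, so a resolving set has size at least the number of such pairs when they are vertex‑disjoint. Two vertices $u,v$ with $d_T(u,v)\le r$ (hence adjacent) are true twins precisely when $(\Bb(u,r)\triangle\Bb(v,r))\cap V(G)=\emptyset$, and by Observation~\ref{ob AB} the area of this symmetric difference is $(4+o(1))\,d_T(u,v)\,r$; thus a pair at distance $\Theta(1/r)$ has a symmetric difference of area $\Theta(1)$, which is vertex‑free with probability $\Theta(1)$. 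I would work in $\RTp$ and de‑Poissonize. Tessellate $T_n$ into $N=\Theta(n/r^2)$ disjoint squares $Q_i$ of side $3r$, and in each let $I_i$ be the concentric square of side $r/2$; if $u,v\in I_i$ and $d_T(u,v)\le r$ then $\Bb(u,r)\cup\Bb(v,r)\subseteq Q_i$, so the event $E_i=\{$there is a true twin pair of $G$ with both vertices in $I_i\}$ is measurable with respect to the point process inside $Q_i$, and the events $(E_i)_i$ are therefore independent.

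The technical core is the estimate $\Prob(E_i)\ge c_0$ for an absolute constant $c_0>0$, which follows from a second moment argument essentially identical to the one in the proof of Lemma~\ref{lem:special_family}: tessellate $I_i$ into $\Theta(r^4)$ tiny squares of side $(r\sqrt2)^{-1}$, each of which contains exactly two vertices with probability $\Theta(r^{-4})$, so the expected number of tiny squares producing a pair at distance at most $1/r$ is $\Theta(1)$, and conditionally on such a pair, its symmetric difference has area $\Theta(1)$ and (away from the tiny square, where almost all of it lies) is determined by points independent of that square, hence is vertex‑free with probability bounded below; a Paley–Zygmund‑type inequality then gives $\Prob(E_i)\ge c_0$. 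Since $r\le c\sqrt{n/\log n}$ with $c$ small enough, $N\ge (8/c_0)\log n$, so by Chernoff's bound at least $c_0N/2=\Omega(n/r^2)$ of the $E_i$ occur with probability $1-e^{-c_0N/8}=1-o(n^{-1/2})$; these yield that many vertex‑disjoint true twin pairs, whence $\beta(G)=\Omega(n/r^2)$ with that probability in $\RTp$, and a.a.s.\ for $G\in\RT$ by de‑Poissonization. The only step requiring care is the constant lower bound on $\Prob(E_i)$; the rest is routine.

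\medskip
\noindent\textbf{Part (ii): $\beta(G)=O((n\log^{2/3}n)/r^{2/3})$.} I would build a resolving set in two layers, reusing the upper‑bound machinery for $\zeta(G)$. When $r$ is close to $\sqrt n$ the squares of $\mathcal{F}$ cover $T_n$ with large overlaps, every pair of vertices lies in a common internal square $I(S)$, and $W=\bigcup_{S\in\mathcal{F}}W(S)$ with $W(S)$ from Lemma~\ref{lem:second_phase} is already resolving and of size $O(r^{4/3})=O(n^{2/3})$, within the claimed bound; so assume $r\le 10^{-5}\sqrt n$, in which case the squares of $\mathcal{F}$ are genuine. \emph{Coarse layer:} let $C$ contain, for each point of a grid of mesh $r$ on $T_n$, a vertex of $G$ within distance $2\sqrt{\log n}$ of it (which exists a.a.s.\ by Observation~\ref{obs:vertices}), so $|C|=O(n/r^2)$. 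A direct check using $r\ge\log^{3/2}n$ and Corollary~\ref{cor:distances} (the quantity $\gamma r^{-4/3}$ being small, as already computed in the proof of Lemma~\ref{lem 1}) shows that if $d_T(u,v)>3r$ then some sensor of $C$ distinguishes $u$ and $v$: the grid point $g$ nearest to $v$ has $d_T(g,v)\le r/\sqrt2$, so its sensor $w_g$ satisfies $d_G(w_g,v)\le 1$, while $d_T(w_g,u)>3r-r/\sqrt2-2\sqrt{\log n}>2r$ forces $d_G(w_g,u)\ge d_T(w_g,u)/r>2$. Consequently every class of vertices sharing a common $C$‑signature has diameter at most $3r$, hence lies in $I(S)$ for some $S\in\mathcal{F}$ (the internal squares have side $\approx 10^5r$ and the family $\mathcal{F}$ overlaps enough that every ball of radius $3r$ fits inside one of them). \emph{Fine layer:} for each $S\in\mathcal{F}$ take the set $W(S)$ of at most $10^{15}r^{4/3}$ vertices provided by Lemma~\ref{lem:second_phase}, which distinguishes all vertices of $I(S)$, and set $W=C\cup\bigcup_{S\in\mathcal{F}}W(S)$.

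Then $W$ is resolving: two distinct vertices with different $C$‑signatures are distinguished by $C$, and two vertices with the same $C$‑signature lie in a common $I(S)$ and are distinguished by $W(S)\subseteq W$. Since $|\mathcal{F}|=O(n/r^2)$,
\[
|W|\;\le\;|C|+\sum_{S\in\mathcal{F}}|W(S)|\;=\;O(n/r^2)+O(n/r^2)\cdot O(r^{4/3})\;=\;O\!\left(n/r^{2/3}\right),
\]
which is $O\!\left((n\log^{2/3}n)/r^{2/3}\right)$ (in fact slightly stronger than stated). I expect the only delicate point to be the geometric assertion that a common $C$‑signature forces diameter $O(r)$; this is immediate from the two‑sided comparison between graph and toroidal distance — the upper bound is Corollary~\ref{cor:distances}, and the lower bound $d_G\ge d_T/r$ holds because every edge has Euclidean length at most $r$.
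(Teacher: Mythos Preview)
Your proof is correct; both parts succeed, with one following the paper closely and the other taking a different route that actually yields a slightly stronger bound.

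\textbf{Part (i).} Your approach is essentially identical to the paper's: tessellate into side-$3r$ cells with small inner cells, look for a tiny subcell containing exactly two vertices whose symmetric difference (contained in the outer cell, disjoint from the inner one) is vertex-free, and conclude via independence across cells plus Chernoff and de-Poissonization. The only difference is that you suggest a Paley--Zygmund/second-moment argument to get $\Prob(E_i)\ge c_0$, whereas the paper does a direct two-step computation: the probability that \emph{some} subcell of $c$ has exactly two vertices is $1-e^{-0.005}+o(1)$, and since the symmetric difference lies in $C\setminus c$ (Poisson-independent of $c$), the conditional probability that it is empty is at least $e^{-4\sqrt{2}}+o(1)$. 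This is simpler than your route and avoids any second-moment bookkeeping; you might prefer it.

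\textbf{Part (ii).} Here you diverge genuinely from the paper. The paper builds the resolving set \emph{globally}: sample each vertex independently with probability $C\log^{2/3}n/r^{2/3}$, use Lemma~\ref{lem:vertices_in_crown} and a union bound over all $O(n^2)$ pairs at distance at least $(\log n/r)^{1/3}$, and then patch up the $O((n/r^2)\cdot r^{4/3}\log^{2/3}n)$ nearby pairs using Lemma~\ref{lem:pairs_at_given_distance}(b). The union bound is what forces the sampling probability up by a factor $\log^{2/3}n$ and produces the stated bound. Your two-layer construction instead reuses Lemma~\ref{lem:second_phase} as a black box on each square of $\mathcal F$, after a coarse grid of $O(n/r^2)$ sensors ensures that any $C$-equivalence class has diameter at most $3r$ and hence sits inside a single $I(S)$. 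This gives $|W|=O(n/r^2)+|\mathcal F|\cdot O(r^{4/3})=O(n/r^{2/3})$, saving the $\log^{2/3}n$ factor because the careful per-square analysis in Lemma~\ref{lem:second_phase} already avoided it. So your argument is a shade less direct (it imports the whole machinery behind Lemma~\ref{lem:second_phase}) but delivers a sharper constant-free exponent; the paper's argument is more self-contained but looser by the logarithmic factor.
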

\begin{proof}
We prove part~(i) in a Poissonized setup, de-Poissonizing only at the end. Tessellate the torus into square cells of width $3r$, and consider in each cell $C$ the inner cell $c$ of width $0.1r$ centered at the same point as $C$. Subdivide further $c$ into subcells of width $1/r$. Consider the event $\mathcal{E}_C$ that inside the inner cell $c$ of $C$ there is a subcell having exactly two vertices $u, v$, and that there is no vertex at distance at most $r$ from $u$ ($v$, respectively), while at the same time being at distance more than $r$ from $v$ ($u$, respectively). Observe that if $\mathcal{E}_C$ holds, then either $u$ or $v$ has to be taken into a minimum set of sensors which guarantees that the cops can win in one round. Moreover, for different cells $C, C'$ the corresponding events $\mathcal{E}_C, \mathcal{E}_{C'}$ are independent. Denote by $X_C$ the indicator random variable for $\mathcal{E}_C$. 

The probability that no subcell has exactly $2$ vertices therein is equal to
$$
\left( 1 - \frac {(1/r^2)^2}{2} e^{-1/r^2} \right)^{(0.01+o(1)) r^4} = \left( 1 - \frac {0.5+o(1)} {r^4} \right)^{(0.01+o(1)) r^4} = e^{-0.005} + o(1). 
$$
Condition on the event that there is a subcell having exactly two vertices $u, v$, and observe that $\Bb(u,r) \Delta \Bb(v,r)$ is completely contained in $C \setminus c$. Thus, since by Observation~\ref{ob AB}, $|\Bb(u,r) \Delta \Bb(v,r)| \le 4 \cdot \sqrt{2}(1+o(1))$, we have 
$$
\Prob (X_C=1) \ge (1 - e^{-0.005}+o(1)) \cdot (e^{-4\sqrt{2}} + o(1)) \ge 10^{-5}.  
$$
Denote $X = \sum_C X_C$. Observing that there are $\Theta(n/r^2)$ cells implies that $\mathbb{E}(X)=\Theta(n/r^2)$. 
Hence, since $r \le c\sqrt{n/\log n}$ for a small enough constant $c > 0$, part~(i) follows by Chernoff's bound~\eqref{chern} together with the de-Poissonization argument given in Section~\ref{sec:Poisson}. 

For part~(ii), recall by Lemma~\ref{lem:vertices_in_crown} that a.a.s.\ for any pair of points with positions $A$, $B$ such that $d_T(A,B) \ge \varepsilon := (\log n/r)^{1/3}$, the number of vertices in $\Bb(A,r) \Delta \Bb(B,r)$ is at least $\min(\eps, 2r)r$. Now, for every vertex, put a sensor on it independently of all others, with probability $C\log^{2/3} n/r^{2/3}$ for some large enough constant $C > 3$ (thus constructing a random set of expected size $C n \log^{2/3}n /r^{2/3}$), and then add for any pair of vertices that is not distinguished yet one of the two vertices. We now show that the number of vertices added is at most of the same order, thus proving the desired upper bound of $O(n \log^{2/3}n /r^{2/3})$. To do so observe that by considering the family $\mathcal F$ of squares (defined right before the statement of Lemma~\ref{lem:second_phase}) every pair of vertices at distance at most $0.1r$ is inside one square $S \in \mathcal F$. Hence, for $r \ge \log^{3/2} n$, by Lemma~\ref{lem:pairs_at_given_distance}~(b) (note that $(\log n/r)^{1/3} \le r^{-0.1}$ for the range of $r$) together with a union bound over all squares $S$ in $\mathcal F$, the number of pairs at distance at most $(\log n/r)^{1/3}$ is at most $|\mathcal F| \cdot (2+o(1)) \cdot 10^7 r^{4/3}\log^{2/3}n=O(n \log^{2/3} n/r^{2/3})$, and we may add for each such pair one vertex. For all other pairs of vertices at distance at least $(\log n/r)^{1/3}$, by a union bound, the probability that there exists a pair not distinguished by the random set is at most
$$
n^2 \left(1 - \frac{C\log^{2/3} n}{r^{2/3}}\right)^{r^{2/3}\log^{1/3}n} =o(1/n),
$$
and hence a.a.s. all such vertices will be distinguished by the random set.
\end{proof}
Together with the already obtained lower bounds on $\zeta(G)$ and using the fact that $\zeta(G) \le \beta(G)$, we thus have the following bounds on the metric dimension:
\begin{theorem}\label{thm:metric_dimension}
Let $G \in \RT$. A.a.s.\ the following bounds hold: 
\begin{itemize}
\item If $1 \ll r \le \sqrt{n}/4$, then
$\Omega\left( \max (n/r^2,r^{4/3}/\log^{1/3} n)\right) = \beta(G)$.
\item If $\log^{3/2}n \le r \le \sqrt{n}/4$, then $\beta(G)= O\left(n \log^{2/3}n /r^{2/3}\right)$.
\end{itemize}
\end{theorem}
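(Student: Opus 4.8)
The plan is to obtain Theorem~\ref{thm:metric_dimension} by assembling results already in hand, so that no new probabilistic estimate is required. The ingredients are: the inequality $\zeta(G) \le \beta(G)$ recorded in the Related results subsection; the lower bound $\beta(G) = \Omega(n/r^2)$ of part~(i) of the preceding lemma; the upper bound $\beta(G) = O(n \log^{2/3} n / r^{2/3})$ of part~(ii) of the preceding lemma; and the lower bound $\zeta(G) = \Omega(r^{4/3}/\log^{1/3} n)$ supplied by Parts~1 and~2 of Theorem~\ref{thm:main} (equivalently, Theorem~\ref{main: lower bound}). All that remains is a short case analysis on the order of $r$ to verify that the maximum in the first bullet is attained by a term for which a matching lower bound is available.

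The second bullet needs no argument: it is literally part~(ii) of the preceding lemma, valid precisely in the range $\log^{3/2} n \le r \le \sqrt{n}/4$. For the first bullet I would distinguish three regimes. When $1 \ll r < \log n$, part~(i) applies (its hypothesis $r \le c\sqrt{n/\log n}$ holds for large $n$) and gives $\beta(G) = \Omega(n/r^2)$; moreover $r < \log n$ forces $r^{4/3}/\log^{1/3} n \le \log n \ll n/\log^2 n \le n/r^2$, so $n/r^2$ already dominates $\max(n/r^2,\, r^{4/3}/\log^{1/3} n)$ and we are done in this regime. When $\log n \le r \le c\sqrt{n/\log n}$, both bounds are available: part~(i) gives $\beta(G)=\Omega(n/r^2)$, while $\zeta(G)\le\beta(G)$ together with Theorem~\ref{thm:main} gives $\beta(G)=\Omega(r^{4/3}/\log^{1/3} n)$; since a lower bound $c_1 A$ and a lower bound $c_2 B$ together yield a lower bound $\min(c_1,c_2)\max(A,B)$, we obtain $\beta(G)=\Omega(\max(n/r^2,\, r^{4/3}/\log^{1/3} n))$. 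Finally, when $c\sqrt{n/\log n} < r \le \sqrt{n}/4$, we have $r \gg \log n$, so $\zeta(G)\le\beta(G)$ and Part~1 of Theorem~\ref{thm:main} yield $\beta(G)=\Omega(r^{4/3}/\log^{1/3} n)$; and $r > c\sqrt{n/\log n}$ implies $r^{10/3} \gg n\log^{1/3} n$, i.e.\ $r^{4/3}/\log^{1/3} n \gg n/r^2$, so this term dominates the maximum. As these three regimes cover $1 \ll r \le \sqrt{n}/4$, the first bullet follows.

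There is no genuine obstacle here; the theorem is a corollary of the lemma, of $\zeta(G)\le\beta(G)$, and of the lower bounds in Theorem~\ref{thm:main}. The only point requiring a little care is the bookkeeping at the three boundary scales $r \asymp \log n$, $r \asymp \sqrt{n/\log n}$ and $r \asymp \sqrt{n}$: one must check both that the available bounds jointly cover the whole range of $r$ and that, in each sub-range, the dominant term of the maximum is the one for which a bound is at our disposal. This amounts to the elementary comparisons of orders of magnitude carried out above.
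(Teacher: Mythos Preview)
Your proposal is correct and takes essentially the same approach as the paper: the paper presents Theorem~\ref{thm:metric_dimension} as an immediate corollary of the preceding lemma together with the inequality $\zeta(G)\le\beta(G)$ and the lower bounds on $\zeta(G)$ from Theorem~\ref{thm:main}, writing only ``Together with the already obtained lower bounds on $\zeta(G)$ and using the fact that $\zeta(G) \le \beta(G)$, we thus have the following bounds on the metric dimension.'' Your case analysis on $r$ merely makes explicit the bookkeeping that the paper leaves implicit, so there is no genuine difference in approach.
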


We finish the paper with the following natural open questions.
\begin{openproblem}
Let $G \in \RG$. Theorem~\ref{thm:metric_dimension} implies that a.a.s.\ $\beta(G) = n^{2/3+o(1)}$, provided that $r = n^{1/2+o(1)}$ (and $r \le \sqrt{n}/4$) but the bounds are far away from each other for sparser graphs.  What is the value of $\beta(G)$ for $G \in \RG$?
\end{openproblem}

\begin{openproblem}
Let $G \in \RG$. Our results imply that a.a.s.\ $\zeta(G) / \beta(G) = o(1)$, provided that $r \ll n^{3/10}$. What about denser graphs?
\end{openproblem}

\begin{openproblem}
Let $G \in \RG$. Our results give relatively tight bounds for the localization number of $G$, provided that $r \ge \log^{3/2} n$. The bounds for sparser graphs are slightly worse. For example, our lower bound in the range of $r \in [r_0, \log n]$ is not monotonic but there is no apparent reason why it should not be monotonic. Moreover, what is the localization number close to the threshold of connectivity? 
\end{openproblem}

\paragraph{Acknowledgements.} The authors are grateful to the two anonymous referees for a number of useful comments.

\bibliographystyle{plain}
\bibliography{References}

\end{document}